\newtheorem{remark}{Remark}[section]
\newcommand{\N}  {{\mathbb N}}
\newcommand{\Z}  {{\mathbb Z}}
\newcommand{\R}  {{\mathbb R}}
\newcommand{\cP}  {{\mathcal P}}
\newcommand{\cL}  {{\mathcal L}}
\newcommand{\cC}  {{\mathcal C}}
\newcommand{\cF}{\mathcal F}
\newcommand{\abs}[1]{\lvert#1\rvert}
\newcommand{\norm}[1]{\lVert#1\rVert}
\newcommand{\dsp}{\displaystyle}
\definecolor{mygreen}{rgb}{0.11,0.7,0.22}
\definecolor{mred}{rgb}{1,0.,0.}
\definecolor{mcyan}{rgb}{1.,0.5,0.}
\definecolor{mgreen}{rgb}{0.,0.5,0.}
\definecolor{mblue}{rgb}{0.,0.,1.}
\definecolor{mbluebis}{rgb}{0.,0.75,1.}
\newenvironment{enum_spe}{ 
\begin{list}{} {
	\setlength{\labelwidth}{3.7cm}\setlength{\leftmargin}{\labelwidth+\labelsep}}}{\end{list}}
\title{Transparent boundary conditions for locally perturbed infinite hexagonal  periodic media\thanks{The
    authors are partially supported by the French ANR fundings under
    the project MicroWave NT09\_460489.}}
\author{C.Besse\footnotemark[2]
\and J. Coatl\'{e}ven\footnotemark[3]
\and S. Fliss\footnotemark[3]
\and I. Lacroix-Violet\footnotemark[2]
\and K. Ramdani\footnotemark[4]}
\begin{document}

\maketitle

\renewcommand{\thefootnote}{\fnsymbol{footnote}}

\footnotetext[2]{Laboratoire Paul Painlev\'e, 
  Universit\'e Lille Nord de France, CNRS UMR 8524, INRIA SIMPAF Team,
  Universit\'e Lille 1 Sciences et Technologies,
  Cit\'e Scientifique,
  59655 Villeneuve d'Ascq Cedex, France. 
({\tt Christophe.Besse@math.univ-lille1.fr}, {\tt Ingrid.Violet@math.univ-lille1.fr}).}

\footnotetext[3]{Laboratoire POems (UMR 7231 CNRS-INRIA-ENSTA) 
32, Boulevard Victor, 75739 PARIS Cedex 15
({\tt sonia.fliss@ensta-paristech.fr}, {\tt julien.coatleven@inria.fr}).}

\footnotetext[4]{
Universit\'e de Lorraine, IECN, UMR 7502, 
Vandoeuvre-l\`es-Nancy, F-54506, France.

CNRS, IECN, UMR 7502, 
Vandoeuvre-l\`es-Nancy, F-54506, France.

Inria, Villers-l\`es-Nancy, F-54600, France.({\tt karim.ramdani@inria.fr})
}

\renewcommand{\thefootnote}{\arabic{footnote}}



\begin{abstract}
	In this paper, we propose a strategy to determine the Dirichlet-to-Neumann (DtN) operator for infinite, lossy and locally perturbed hexagonal periodic media. We obtain a factorization of this operator involving two non local operators. The first one is a DtN type operator and corresponds to a half-space problem.  The second one is a Dirichlet-to-Dirichlet (DtD) type operator related to the symmetry properties of the problem. The half-space DtN operator is characterized via Floquet-Bloch transform, a family of elementary strip problems and a family of stationary Riccati equations. The DtD operator is the solution of an affine operator valued equation which can be reformulated as a non standard integral equation.
\end{abstract}

\begin{keywords}
Transparent boundary conditions, DtN operator, periodic media, hexagonal lattice
\end{keywords}

\begin{AMS}
  35B27, 35J05, 35L05, 35Q40, 35Q60
\end{AMS}

\pagestyle{myheadings}
\thispagestyle{plain}
\markboth{Besse, Coatleven, Fliss, Lacroix-Violet,
  Ramdani}{Transparent boundary conditions for locally perturbed
  infinite hexagonal  periodic media}


\section{Introduction and problem setting}\label{sec:intro}

Periodic media appear in many physical and engineering applications related to wave type systems. In solid state theory, it is well-known that the existence of insulators and conducting materials can be explained by the periodic properties of the crystal. More precisely, within the one-electron model of solids, electrons move under a Hamiltonian with periodic potential yielding the existence of band gaps (see for example Reed and Simon \cite[p.\,312]{ReeSim78}). In optics, many devices used in microtechnology and nanotechnology involve materials with such electromagnetic properties, known as photonic crystals. For a general introduction to the physics of photonic crystals, we refer the interested reader to the monographs by Joannopoulos {\it et al.} \cite{Joetal95}, Johnson and Joannopoulos \cite{JoJo02}, Sakoda \cite{Sak04} or the review paper by Busch {\it et al.} \cite{BusFreLin07}. Concerning mathematical aspects related to photonic crystals, see for instance the book of Kuchment \cite{Ku01}. About twenty years ago, the analysis of elastic wave propagation in periodically structured media lead to the concept of phononic crystals (see for instance the book by Maldovan and Thomas \cite{MalTho08}). \\\\
Although they concern materials involving quite different scales and types of waves (acoustic, electromagnetic, elastic), photonic, phononic or real crystals (in solids) share a lot. The main common feature, inherited from the periodic structure, is the appearance of band-gaps (forbidden bands), i.e. the strong attenuation of a certain range of frequencies (at least in some directions). This phenomenon is due to the fact that an incident wave on the crystal is multiply scattered by the periodic structure, leading to possibly destructive interference (depending on the characteristics of the crystal and the wave frequency).\\\\
Most applications involving periodic structures use these band gaps to control wave propagation properties of materials (see \cite{BusFreLin07} for an extended list of references). More precisely, the aim can be to prohibit propagation of waves 	\cite{RobIII98}, allow propagation of some frequencies and/or some directions like in optical filters \cite{QiuMulSwi03}, localize waves by creating a defect in the periodic structure \cite{LevSte84,Sak04}, create optical nanocavities \cite{AkaAsaSon05} or to guide waves with bends \cite{TalGouBou02}, Y--junctions \cite{BosMidKra02} and T--junctions \cite{FanJohJoa01}.\\
In the above applications, numerical simulation plays a crucial role in the design of photonic, phononic and real crystals. In particular, the problem of determining which physical and geometrical properties of a perfect or imperfect crystal yield band gaps at specified frequencies, guiding waves along some desired paths or trapping them in a given cavity is computationally challenging. The complexity of designing such devices is increased by the strong scattering effects involved, due to the high contrast of the composing materials on the one hand, and the geometric scales (wavelength and size of the periodic cell) on the other hand. Therefore, several numerical methods and tools have been developed to simulate  wave propagation in {\it infinite periodic} media. A first class of methods covers problems where the periodicity can be handled via homogenization techniques \cite{AlCoVa99,BoGuZo10}. Indeed, when the wavelength is much larger than the period, the periodic medium behaves asymptotically like a homogeneous one. The unboundedness of the medium can then be treated analytically using integral equations, Dirichlet-to-Neumann (DtN) operators or Perfectly Matched Layers techniques. On the contrary, a second class of methods keeps the periodicity but considers only  
\begin{itemize}
	\item media which are finite (i.e. constituted of a finite number of periodic cells embedded in an infinite homogeneous domains) \cite{EhrHanZhe09,EhZh08,HuLu08,YuLu06,YuLu07};
	\item media which can be reduced to finite domains. The case of imperfect\footnote{A local defect is generally created by changing locally the physical/geometrical properties of the crystal.} periodic media has been more intensively investigated \cite{FigKle97,FigKle98,Gau07}. In these works, the problems considered lead to exponentially decaying solutions, which allows the truncation of the infinite periodic media.
\end{itemize}
More recently, DtN operators have been derived for infinite two-dimensional periodic media containing local defects in \cite{TheseSonia,FlissAPNUM}. The main assumptions concern the directions of periodicity (orthogonal), the corresponding periodicity lengths (commensurate) and the presence of a dissipative term (arbitrarily small). 

\begin{figure}[ht]
  \centering
	\includegraphics[width=.4\textwidth]{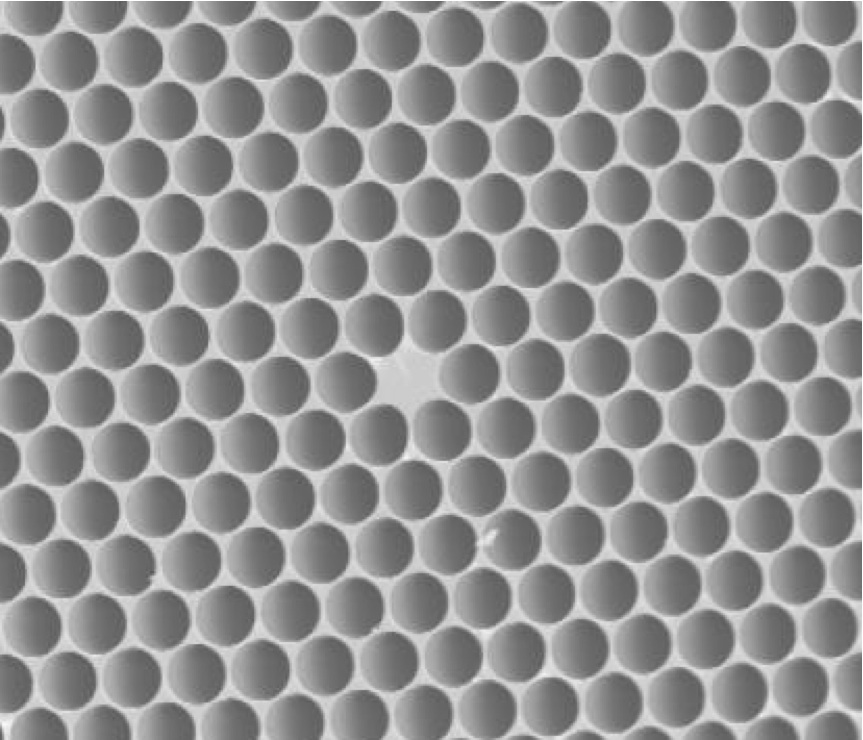}
	\hspace{0.1\textwidth}
  \includegraphics[width=.4\textwidth]{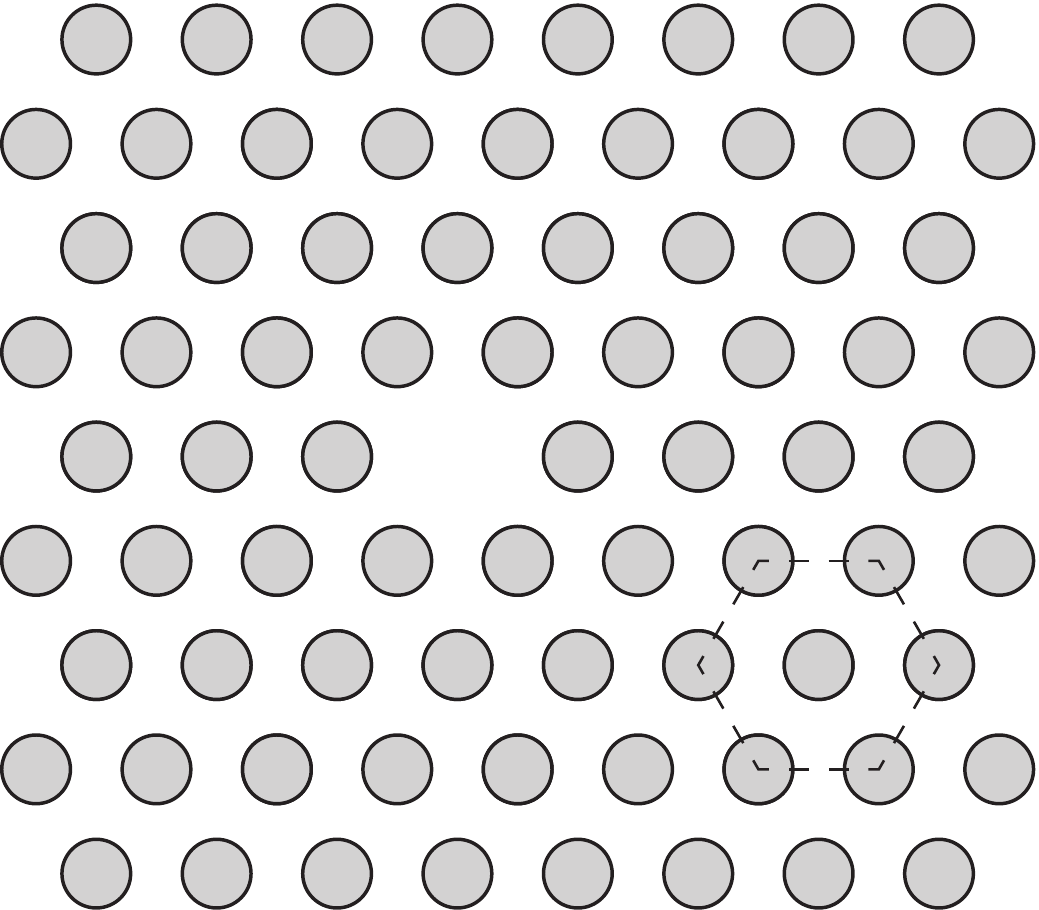}
  \caption{A locally perturbed photonic crystal with triangular lattice (cross section of a photonic crystal optical fiber (left) and representation of the corresponding hexagonal periodicity cell (right)).}
  \label{fig:photonic_crystal}
\end{figure}

~\\~\noindent In this work, we are interested in the analysis of hexagonal periodic media containing a local defect (see Figure~\ref{fig:photonic_crystal}). We call hexagonal periodic medium (also known as hexagonal lattice) a two dimensional domain where
\begin{itemize}
	\item the angle between the periodicity directions is $\pi/3$;
	\item the periodicity cell has hexagonal symmetry (see Definition \ref{defrotation}).
\end{itemize} 
Note that like in \cite{TheseSonia,FlissAPNUM}, one could consider the media as a periodic one with two orthogonal directions of periodicity. However, the corresponding periods would not be commensurate in this case. 

\smallskip
\noindent Hexagonal lattices appear in quantum mechanics
\cite{ELi09,Ker92,ReeSim78}, 
phononics \cite{Le-Rao11,PhaFle06,SpaRuzGon09} and photonics \cite{DosByrBot06,EdeHel09,GauMnaNew08}. Although they concern different applications and involve different types of waves, the corresponding problems are quite similar from the mathematical point of view. Essentially, as far as numerical simulation is concerned, the main issue is to determine a transparent boundary condition to reduce the problem initially set on an unbounded domain to a problem set on a bounded one containing the defect cell. The differential operators involved are the ones describing the underlying physics of the problem under consideration, namely
\begin{itemize}
	\item {\bf Quantum mechanics:} in a (classical) crystal, the mathematical formulation of the problem leads to the {\bf Schr\"odinger operator} 
	\begin{equation}\label{Aschrodinger}	
		Au:=-\Delta u+(V+ip)u
	\end{equation}	
	where $V(x)$ denotes the potential in the lattice and $p$ is the Laplace variable.
	\item {\bf Phononics:} the operator involved is the {\bf elasticity system} (see \cite{AmmKanLee09,AviGriMia05,Le-Rao11,PhaFle06,RupEvgMau07,TanYanTam07})
	\begin{equation}\label{Aelasticite}
	A{\mathbf u}:=-\text{div\,}{\boldsymbol\sigma}({\mathbf u})+\omega^2\rho {\mathbf u},	
	\end{equation}
where ${\mathbf u}$ denotes the displacement and 
${\boldsymbol\sigma}({\mathbf u})={\mathbf C}:{\boldsymbol\varepsilon}({\mathbf u})$ the stress tensor, in which  ${\boldsymbol\varepsilon}({\mathbf u})$ stands for the strain tensor and ${\mathbf C}$ the 4$th$-order elasticity tensor ($\rho$ and $\omega$ respectively denote the mass density and the time frequency).
	\item {\bf Photonics:} in this case, electromagnetic propagation is described by the vector {\bf Maxwell's equations}. For two-dimensional photonic crystals, these equations reduce to the following scalar equations respectively in the cases of TE (Transverse Electric) and TM (Transverse Magnetic) polarizations:
	\begin{equation}\label{AEMG}	
	\left\{
	\begin{array}{ll}
		\text{TE case: } & \dsp Au := \Delta u + \omega^2n^2 u\\
		\text{TM case: } &\dsp  Au := -\text{div\,}\left(\frac{1}{n^2}\nabla u\right) + \omega^2 u
	\end{array} 
	\right.
	\end{equation}
	where $n(x)$ denotes the index of refraction and $\omega$ the wavenumber.
\end{itemize}
Note that all the above operators involve an elliptic principal part. Moreover, the time dependence has been eliminated via a Fourier (or Laplace) transform, leading to the appearance of the real parameter $\omega$ (or the complex parameter $p$). Perfect periodic media are described via the operators \eqref{Aschrodinger}, \eqref{Aelasticite} or \eqref{AEMG} --depending on the considered application-- with periodic coefficients (potential, elasticity tensor and mass density or index, respectively for classical, phononic and photonic crystals). 
\\\\
The introduction of a defect in the periodic structure is taken into account by adding a bounded obstacle or locally perturbing the coefficients. Typically, scattering by an impurity in a classical crystal (i.e. when the lattice contains a different atom at one of the lattice points), we have 
$$V=V_{\text{per}}+V_0$$ 
where $V_{\text{per}}$ is periodic and $V_0$ is a short range potential describing the local perturbation (see \cite[p.\,312]{ReeSim78}).

\begin{figure}[ht]
  \centering
\psfrag{a}[c][][1.0]{$\Omega^i$}
\psfrag{b}[c][][1.0]{$\Sigma^i$}
\psfrag{e}[l][][1.0]{$\Sigma^0$}
\psfrag{c}[c][][1.0]{$x$}
\psfrag{d}[r][][1.0]{$y$}
\psfrag{f}[c][][1.0]{$O$}
\psfrag{g}[c][][1.0]{${\bf e}_1$}
\psfrag{h}[c][][1.0]{${\bf e}_2$}
\psfrag{i}[c][][1.0]{$d$}
  \includegraphics[width=.7\textwidth]{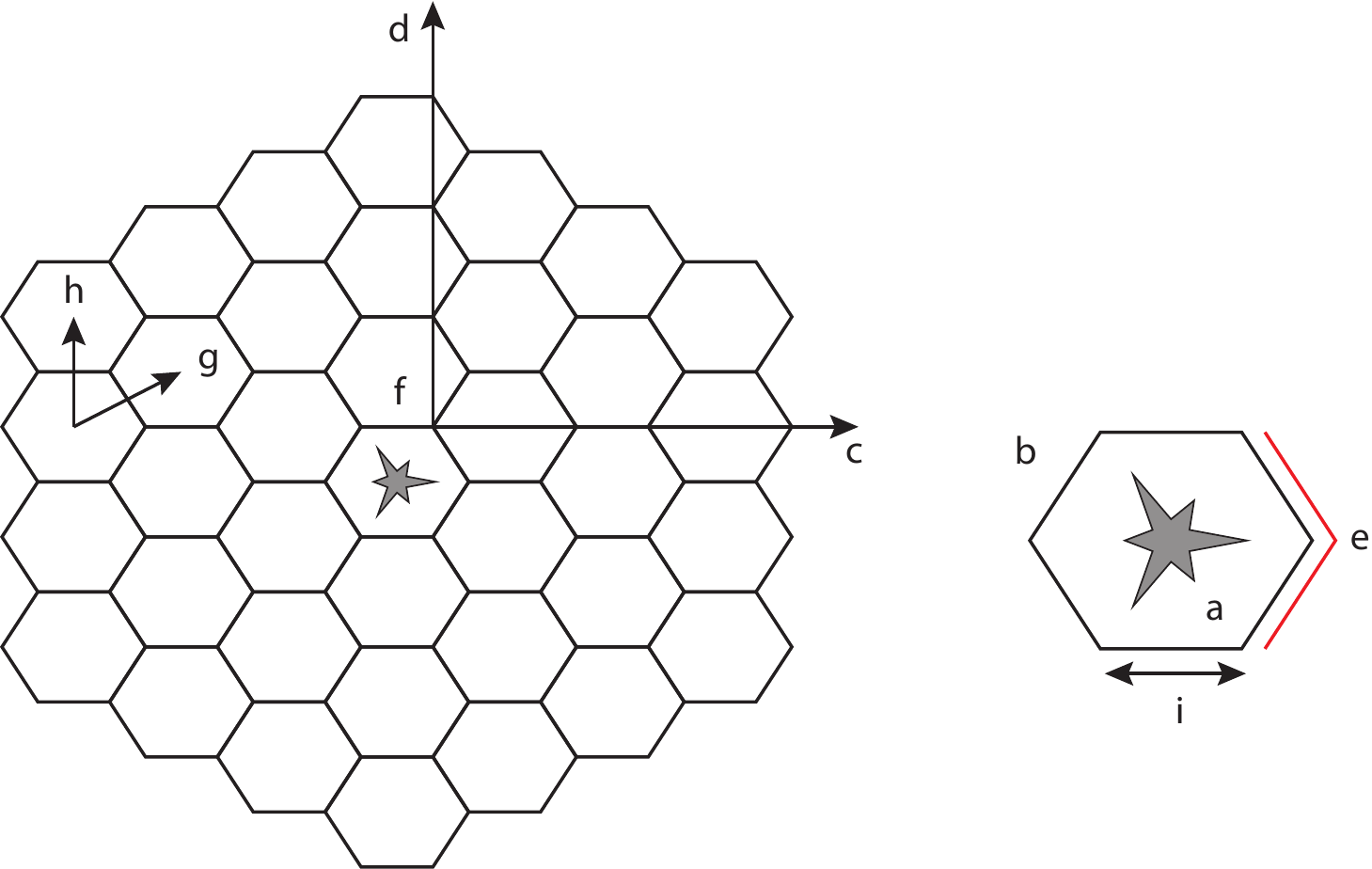}
  \caption{The hexagonal periodic medium with defect.}
  \label{fig:geometry}
\end{figure}

~\\~\noindent In this paper, we will restrict our study, for the sake of simplicity, to the case of an infinite photonic crystal $\Omega=\R^2$ containing a localized defect in the hexagonal cell $\Omega^i$ (whose side length is denoted by $d$). We denote by  $\Sigma^i=\partial\Omega^i$ the boundary of this cell and by $\Omega^e=\Omega\setminus\overline{\Omega^i}$  its exterior. From the mathematical point of view, the model problem we consider throughout the paper is a dissipative Helmholtz equation
\begin{equation}\label{Helmholtz}
\Delta u +\rho u= f, \qquad\hbox{in }\Omega,
\end{equation}
where the following assumptions will always be supposed to hold true 
\begin{itemize}
	\item {\bf (A1)} $\rho$ is a local perturbation of a hexagonal periodic function $\rho_{\text{per}}$. More precisely:
	\[
		\rho = \rho_{\text{per}} +\rho_0
	\]
	where 
	\begin{itemize}
		\item for all ${\bf x}=(x,y)\in \Omega$ and all $ (p,q)\in\Z^2,\; \rho_{\text{per}}\left({\bf x}+p{\bf e}_1+q{\bf e}_2\right)=\rho_{\text{per}}({\bf x})$ with ${\bf e}_1=(3d/2,\sqrt{3}d/2)$ and ${\bf e}_2=(0,\sqrt{3}d)$ the two directions of periodicity of the media (see Figure~\ref{fig:geometry});
		\item $\rho_{\text{per}}$ and $\rho_0$ have hexagonal symmetry (see Definition \ref{defsymfonc});
		\item Supp$(\rho_0)\subset \Omega^i$.
	\end{itemize}
	\item {\bf (A2)} $\rho$ satisfies the dissipation property 
	\begin{equation}\label{HypDissipation}
		|{\rm Im}\, \rho({\bf x})|\geq \rho_b>0, \qquad\forall {\bf x}\in \Omega.
	\end{equation}
	\item {\bf (A3)} The source $f$ is compactly supported in $\Omega^i$ and has hexagonal symmetry.
\end{itemize}
Before going further, let us make some comments about the above assumptions. 
In {\bf (A2)}, condition \eqref{HypDissipation} guarantees the
existence and uniqueness of finite energy solutions of
\eqref{Helmholtz} (i.e. solutions in $H^1(\Omega)$) in the infinite
domain $\Omega$ (as it can be easily checked by using Lax-Milgram
Lemma). When no dissipation is assumed (i.e. when $\rho$ is
real-valued), such existence and uniqueness issues will not be
discussed in this paper, nor will be the derivation of a limiting
absorption principle. Let us emphasize that the latter remains, to our
knowledge, an open question (see \cite[Remark 2]{FlissAPNUM} and \cite{JolLiFli06} for a similar discussion). \\ 
\\\\
The main goal of this paper is to propose a method to solve \eqref{Helmholtz} in the infinite domain $\Omega$ under assumptions {\bf (A1)-(A2)-(A3)}. 
The main steps of our approach  are presented in a formal way in the next Section. 

\section{Formal presentation of the method}
For the reader's convenience, we describe in this section the broad outlines of our approach in order to help the reader getting an overview of the proposed strategy. Willfully, we decided to focus on the description of the mains steps of the method and to skip in this part all the technical details related to the functional framework.
\\\\
Our approach is adapted from the one used in \cite{TheseSonia} for the case of square lattices. The key idea is to reduce problem \eqref{Helmholtz}, which is set in the unbounded domain $\Omega$, to a boundary value problem set in the cell $\Omega^i$ containing the defect. To achieve this, we need to derive a suitable transparent boundary condition on $\Sigma^i$ associated with a DtN operator $\Lambda$.
More precisely, we note that the restriction $u^i:=u\left|_{\Omega^i}\right.\in H^1(\Omega^i)$ solves the interior boundary value problem
\[
\left\{
\begin{array}{ll}
\Delta u^i + \rho u^i = f,&\qquad\hbox{in }\Omega^i, \\[5pt]
\displaystyle\frac{\partial u^i}{\partial \nu^i} + \Lambda u^i=0,&\qquad\hbox{on } \Sigma^i,
\end{array}
\right.
\]
where $\nu^i$ is the outgoing unit normal to $\Omega^i$ and $\Lambda$ denotes the DtN operator defined by
\begin{equation}\label{eqLambda}
\Lambda \phi=-\displaystyle\left.\frac{\partial u^e(\phi)}{\partial \nu^i}\right|_{\Sigma^i}
\end{equation}
in which $u^e(\phi) \in H^1(\Omega^e)$ is the unique solution of the exterior problem
\begin{equation}
\label{Pbext}
\left\{
\begin{array}{ll}
\Delta u^e(\phi) + \rho u^e(\phi)= 0,& \qquad\hbox{in }\Omega^e, \\[5pt]
u^e(\phi)=\phi,& \qquad \hbox{on } \Sigma^i.
\end{array}
\right.
\end{equation}
The core of the paper is thus devoted to the computation of this DtN operator $\Lambda$. First of all, let us emphasize that due to the symmetry properties of the original problem ($2\pi/3$ rotational invariance, see Assumptions {(\bf A1)} and {(\bf A3)}), it suffices to compute this DtN operator $\Lambda$ for Dirichlet data  $\phi$ on $\Sigma^i$ with hexagonal symmetry (See section \ref{Pbdefault} for more details). In this Section, we restrict our analysis from now on to such symmetric data.
\\\\
In order to describe the different steps of our method, we need to introduce some additional notation.
\begin{figure}[ht]
  \centering
\psfrag{b}[l][][1.0]{$\Sigma^H$}
\psfrag{c}[l][][1.0]{$\left.u^H(\phi)\right|_{\Sigma^H}=\phi$}
\psfrag{d}[l][][1.0]{$u^H(\phi)$}
\psfrag{e}[l][][1.0]{$\Omega^H$}
\psfrag{f}[l][][1.0]{$\Sigma^H$}
\psfrag{g}[l][][1.0]{$\left.\dsp\Lambda^H(\phi)\right|_{\Sigma^H}=\left.\displaystyle\frac{\partial u^H(\phi)}{\partial\nu^H}\right|_{\Sigma^H}$}
\psfrag{h}[l][][1.0]{$u^H(\phi)$}
\psfrag{i}[l][][1.0]{$\Omega^H$}
\psfrag{j}[l][][1.0]{$\Lambda^H$}
  \includegraphics[width=\textwidth]{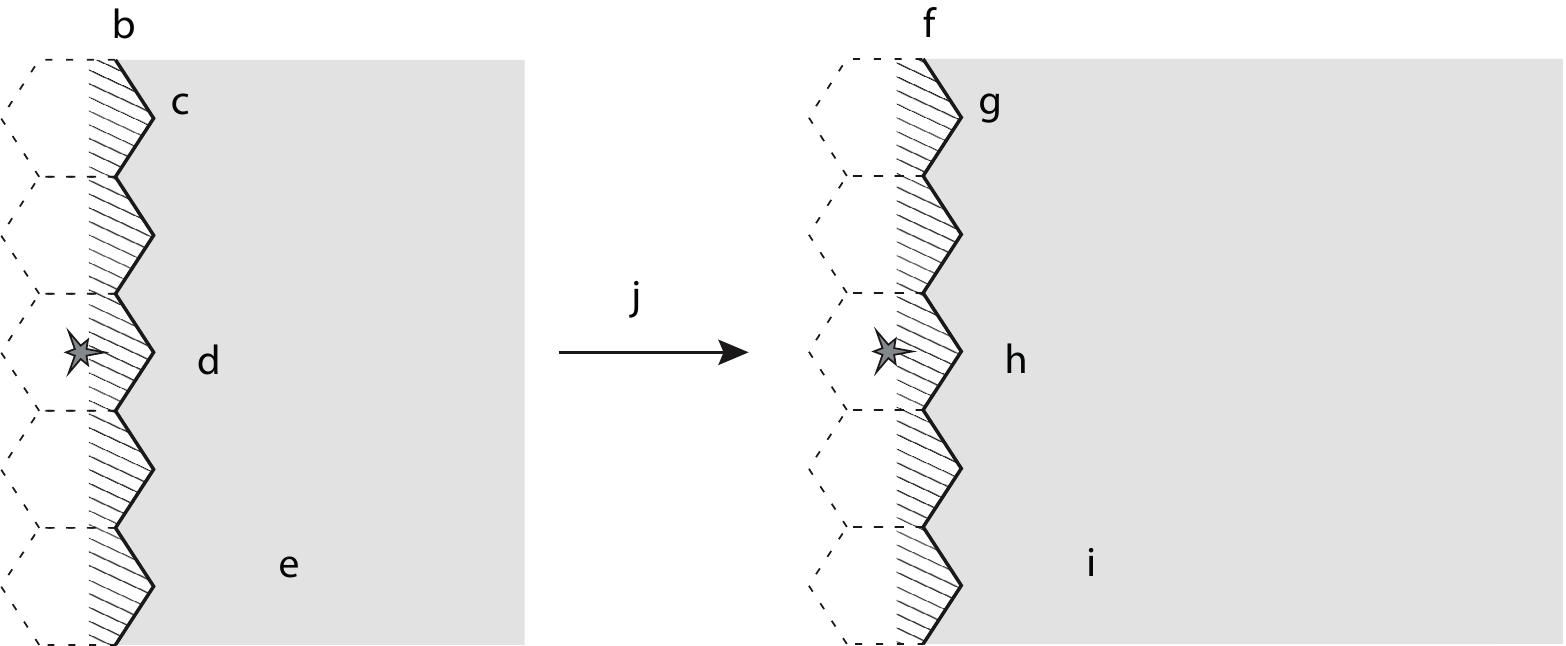}
  \caption{The half-space DtN operator $\Lambda^H$}
  \label{notationssec3}
\end{figure}
\begin{figure}[ht]
  \centering
\psfrag{a}[l][][1.0]{$\Sigma^i$}
\psfrag{c}[c][][1.0]{$\Omega^i$}
\psfrag{d}[c][][1.0]{$\Omega^i$}
\psfrag{b}[l][][1.0]{$\Sigma^H$}
\psfrag{f}[l][][1.0]{$\phi$}
\psfrag{e}[l][][1.0]{$u^e(\phi)$}
\psfrag{g}[l][][1.0]{$\left.u^e(\phi)\right|_{\Sigma^H}$}
\psfrag{k}[c][][1.0]{$D_{2\pi/3}$}
  \includegraphics[width=\textwidth]{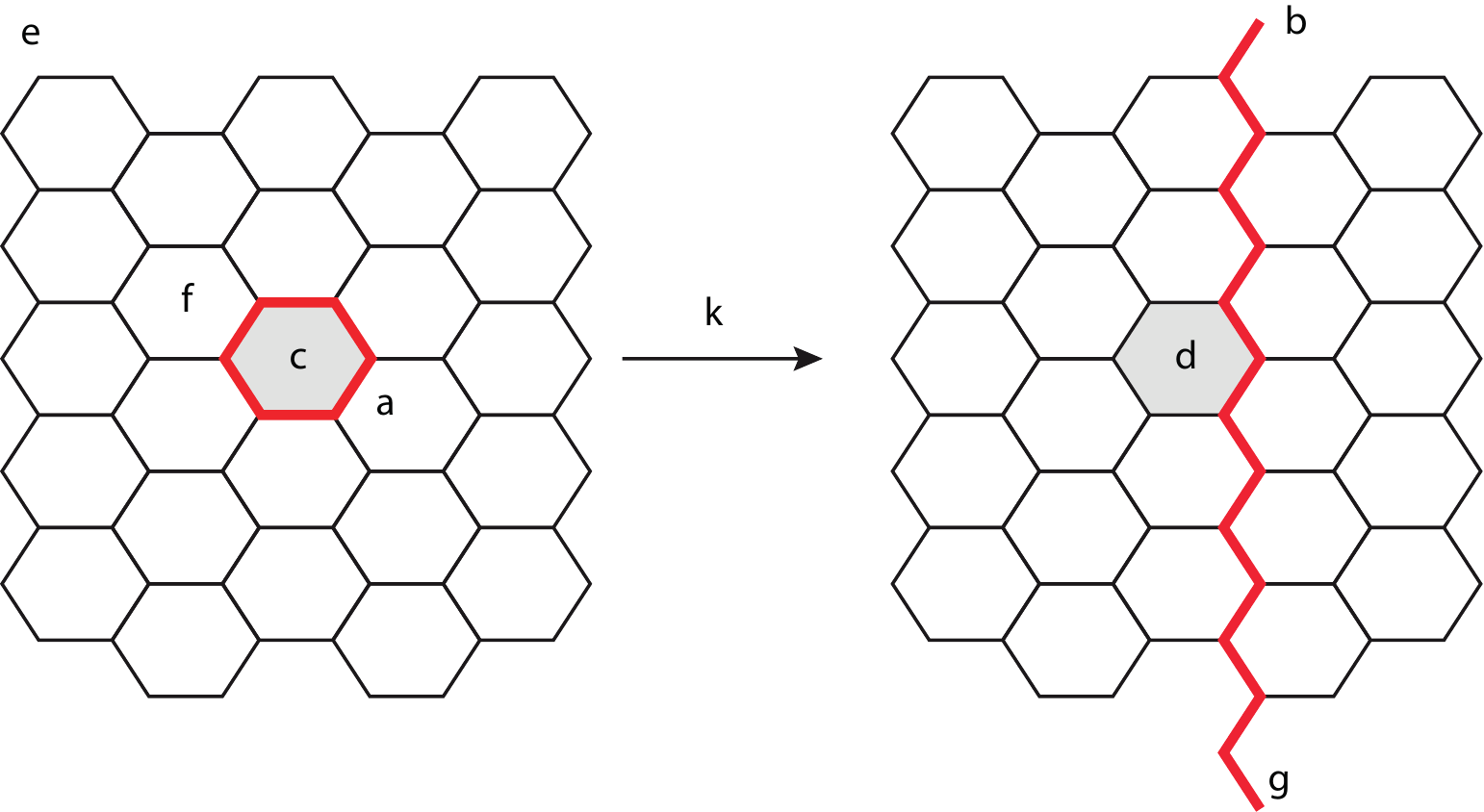}
  \caption{The DtD operator $D_{2\pi/3}$}
  \label{fig:d2pis3}
\end{figure}
Let $\Sigma^H$ be the boundary depicted in Figure~\ref{notationssec3} and let $\Omega^H$ be the half-space to right of $\Sigma^H$. Given a Dirichlet data $\phi$ on $\Sigma^{H}$, we define the half-space DtN operator  $\Lambda^H$ by setting (see Figure~\ref{notationssec3})
\begin{equation}\label{eqLambdaH_new}
\Lambda^H \phi =\left. \frac{\partial u^H(\phi)}{\partial\nu^H}\right|_{\Sigma^H}
\end{equation}
where $\nu^H$ is the outgoing unit normal to $\Omega^H$ and $u^H(\phi)$ is the unique solution in $H^1(\Delta,\Omega^H)$ of the half-space problem
\begin{equation}
\label{PbDemiespsec3_new}
(\mathcal{P}^H)\quad\left\{
\begin{array}{ll}
\Delta u^H(\phi) + \rho u^H(\phi) =0, &\qquad\hbox{in } \Omega^H, \\[5pt]
u^H(\phi)= \phi, & \qquad\hbox{on }\Sigma^H.
\end{array}
\right.
\end{equation}
The second important ingredient needed is the DtD operator $D_{2\pi/3}$ defined from the boundary $\Sigma^i$ to $\Sigma^H$ (see Figure~\ref{fig:d2pis3}) by the formula:
\begin{equation}\label{eqD2pi3_new}
D_{2\pi/3}\phi = \left.u^e(\phi)\right|_{\Sigma^H}
\end{equation}
where $u^e(\phi)$ is the unique solution of the exterior problem \eqref{Pbext} for a given Dirichlet data $\phi$ on $\Sigma^{i}$. 
\subsubsection*{\bf Step 1 : Factorization of the DtN operator $\Lambda$}
With the above notation, it is clear that given a Dirichlet data  $\phi$ on $\Sigma^i$, the functions $u^e(\phi)$ and $u^H(D_{2\pi/3}\phi)$ are both solutions of the half-space homogeneous Helmholtz problem 
$$
\Delta U + \rho U =0,
$$
with the same Dirichlet condition on $\Sigma^H$, namely $D_{2\pi/3} \phi.$ The uniqueness of the solution of this problem implies that 
$$u^e(\phi)\left|_{\Omega^H}\right.=u^H(D_{2\pi/3}\phi),$$ 
and in particular, the corresponding normal derivatives coincide on the part of the boundary $\Sigma^i\cap \Sigma^H$  where they are both defined, yielding
\begin{equation}\label{eqLambdafact_new}
\left.\Lambda \phi\right|_{\Sigma^i\cap \Sigma^H} =\left. \Lambda^H \left(D_{2\pi/3}\phi\right)\right|_{\Sigma^i\cap \Sigma^H},
\end{equation}
the remaining part of $\Lambda \phi$ on $\Sigma^i$ being recovered using hexagonal symmetry. The above relation constitutes the starting point of our strategy : the DtN operator $\Lambda$ can be computed via the factorization formula \eqref{eqLambdafact_new}.
We provide in Theorem \ref{THsec3_2} a more precise statement of this {\em factorization result}, paying a particular attention to the functional framework. Thus, our problem is now reduced to the computation of the half-space DtN operator $\Lambda^{H}$ and the DtD operator $D_{2\pi/3}$ for well prepared data ($i.e.$ having $2\pi/3$ rotational invariance).
\subsubsection*{\bf Step 2 : Characterization of the DtN operator $\Lambda^H$}
In order to compute the half-space DtN operator $\Lambda^H$, the first key ingredient is the (partial) Floquet-Bloch transform in the vertical direction (see \S.\ref{sub:solution_of_the_half-space_problem}). More precisely, applying it to the Helmholtz half-space problem \eqref{PbDemiespsec3_new}, we will see that it suffices to consider the case of $k-$quasiperiodic Dirichlet data $\phi$ on $\Sigma^H$, that is 
$$
\phi(y+qL)=\phi(y)e^{iqkL}, \qquad \forall  y\in \R, \ \forall q\in \Z, 
$$
where $L = \sqrt{3}d$ denotes the period in the vertical direction and $k\in (-\pi/L,\pi/L)$. From now on we will thus restrict our analysis to $k-$quasiperiodic Dirichlet data.
\\\\
The second important tool we need is the so-called {\em propagation operator} $\cP_k$, defined as follows (see \S\ref{ssub:the_propagation_operator}). For any $k-$quasiperiodic Dirichlet data $\phi$ on $\Sigma^H$, $\cP_k\,\phi$ is nothing but the trace of the solution $u^H(\phi)$ of \eqref{PbDemiespsec3_new} on the trasnalted vertical boundary $\widetilde{\Sigma}^{H}=\Sigma^H+{\bf e}_{1}$ (see Assumption {\bf (A1)}). The main advantage of this operator is that it allows us to determine the solution $u^H(\phi)$ of the half-space problem in any cell from the knowledge of $u^H(\phi)$ on a reference cell.
\begin{figure}[htbp]
  \centering
\psfrag{a}[c][][1.0]{$\mathcal{C}_{00}$}
\psfrag{b}[c][][1.0]{$\mathcal{C}_{10}$}
\psfrag{c}[c][][1.0]{$\mathcal{C}_{01}$}
\psfrag{e}[c][][1.0]{$\mathcal{C}_{20}$}
\psfrag{f}[c][][1.0]{$\mathcal{C}_{02}$}
\psfrag{d}[c][][1.0]{$\Omega_0$}
\includegraphics[width=.3\textwidth]{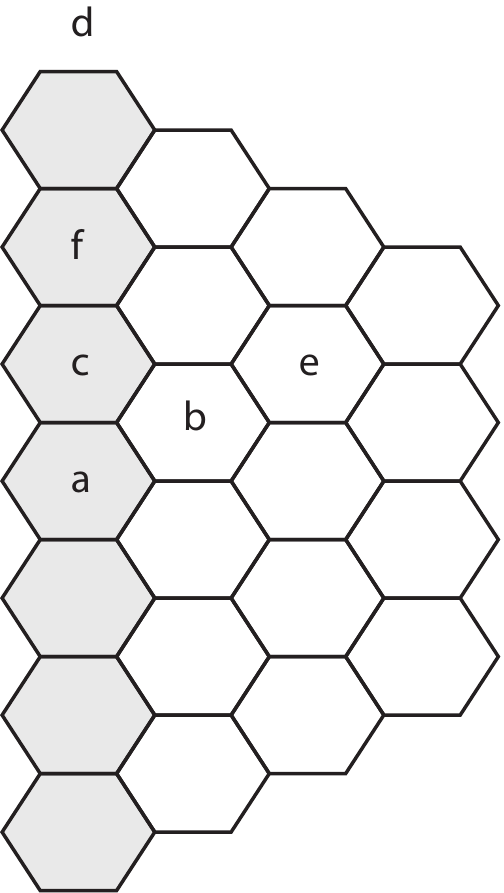}
\caption{Description of the half-space $\Omega^H$ and its periodicity cells}
\label{phiperiode}
\end{figure}
More precisely, with the notation given in Figure \ref{phiperiode}, we can prove thanks to a uniqueness argument that
$$
\left.u^H(\phi)\right|_{\mathcal{C}_{pq}} = e^{iqkL}\left.u^H((\cP_k)^p\phi)\right|_{\mathcal{C}_{00}}.
$$
According to the above relation, solving the half-space problem amounts to determining the propagator $\cP_k$ and the values of $u^H(\phi)$ but only in the reference cell ${\mathcal{C}_{00}}$. For the latter, by linearity, this can be achieved by solving two elementary cell problems set in ${\mathcal{C}_{00}}$ (see \S.\ref{ssub:elementary_cell_problems}). Regarding the determination of the propagator operator $\cP_k$, we proceed as follows (see \S.\ref{ssub:the_ricatti_equation_for_the_determination_of_the_propagator}). Writing the matching of the normal derivatives of the solution $u^H(\phi)$ across a suitable part of the interface $\widetilde{\Sigma}^H$, we will show that $\cP_k$ solves a stationary Riccati equation.
\subsubsection*{\bf Step 3 : Characterization of DtD operator $D_{2\pi/3}$}
While the determination of the half-space DtN operator $\Lambda^{H}$ described in Step 2 essentially uses the periodicity properties of the medium, the determination of the DtD operator $D_{2\pi/3}$ crucially uses its hexagonal symmetry. More precisely, we show in Theorem \ref{THsec3_3} that $D_{2\pi/3}$ solves an affine operator-valued equation (see equation \eqref{PbD} in Theorem \ref{THsec3_3}) that is well-posed. In order to handle this equation from a practical point of view, we use once again Floquet-Bloch variables instead of the physical ones. Doing so, we reduce this affine equation to the solution of a set of non standard constrained integral equations (see equations \eqref{PbD_FB}).

\begin{remark}
For the sake of clarity, we have restricted our analysis in this paper to the model problem \eqref{Helmholtz}, under assumptions {\bf (A1)-(A2)-(A3)}. From the physical point of view, this problem describes the radiation in a photonic crystal of a source localized (typically a real source) in one cell of the periodic medium. We would like to emphasize that our approach can be generalized to tackle other types of problems: extended defects ($i.e.$ covering more than one periodic cell), non penetrable defects or scattering problems by a local inhomogeneity/obstacle in the crystal. Non symmetric configurations can also be considered but involves more intrincate integral equations (see \cite{TheseSonia} for non symmetric square lattice).
\end{remark}

The precise description of the three steps briefly outlined above involves many technical details related to the functional framework that needs to be carefully settled for the DtN and DtD operators. In Section \ref{Pbdefault}, we collect some notational and mathematical background related to geometries with hexagonal symmetry. Section \ref{Sect_FACT} provides the factorization result (Theorem \ref{THsec3_2}), which plays a key role in our method for computing the DtN operator $\Lambda$. The next two sections are devoted to the analysis of the two non local operators $\Lambda^H$ and $D_{2\pi/3}$ involved in the factorization. More precisely, Section \ref{SPbdemiesp} deals with the half-space DtN operator $\Lambda^H$, while the DtD operator $D_{2\pi/3}$ is studied in Section \ref{sec:opextension}. An algorithm will be given in conclusion to summarize this construction.


\section{Hexagonal symmetry and related results}
\label{Pbdefault}

\subsection{Definitions and functional framework}
\begin{definition}
\label{defrotation}
A domain ${\cal O}$ of $\R^2$ has hexagonal symmetry if there exists a rotation of angle $2\pi/3$, denoted $\Theta_{2\pi/3}$ for which ${\cal O}$ is invariant.
\end{definition}
\noindent In the above definition, the center of the rotation is not specified to simplify the notation. In the following, this center will always be the center of gravity of ${\cal O}$.
\begin{remark}
If a domain ${\cal O}$ of $\R^2$ has hexagonal symmetry, its boundary $\partial {\cal O}$ also has hexagonal symmetry.
\end{remark}
\begin{definition}
\label{defsymfonc}
Let ${\cal O}$ be an open set with hexagonal symmetry and let $g$ be a real or complex valued function defined on ${\cal O}$. Then, $g$ has hexagonal symmetry if $$g=g \circ \Theta_{2\pi/3}.$$
\end{definition}
\noindent Let ${\cal O}$ be an open domain with hexagonal symmetry (typically ${\cal O} =\Omega^e$, $\Omega^i$ and $\partial {\cal O}=\Sigma^i$). For $s\ge 0$, we denote by $H^s_{2\pi/3}({\cal O})$ the closed subspace of $H^s({\cal O})$ defined by 
\begin{equation}\label{eq:def_Hs}
	H^s_{2\pi/3}({\cal O})=\{ v \in H^s({\cal O}),~v=v \circ \Theta_{2\pi/3} \}.
\end{equation}
One can easily check that the restriction to $H^1_{2\pi/3}({\cal O})$ of the trace operator $\gamma_0$ defined by
$$\forall u \in H^1({\cal O}),\quad\gamma_0 u=u\left|_{\partial {\cal O}}\right.\in H^{1/2}(\partial{\cal O}),$$
defines a continuous operator from $H^1_{2\pi/3}({\cal O})$ onto $H^{1/2}_{2\pi/3}(\partial{\cal O}).$ 
\\\\
Finally, let us define the appropriate functional space for the normal trace of a function with hexagonal symmetry. To achieve this, we introduce the spaces 
$$H^1(\Delta,{\cal O})= \{u\in H^1({\cal O})\mid \Delta u \in L^2({\cal O}) \}$$ 
and 
$$H^1_{2\pi/3}(\Delta,{\cal O})= \{u\in H^1_{2\pi/3}({\cal O})\mid \Delta u \in L^2_{2\pi/3}({\cal O})\}.$$
The next result expresses in term of functional spaces the commutativity of the Laplace operator with the rotation $\Theta_{2\pi/3}.$ 
\begin{theorem}
\label{THsec3_1}
Let ${\cal O}$ be an open set of $\R^2$ with hexagonal symmetry. The Laplace operator commutes with any unitary transform 
and maps $H^1_{2\pi/3}(\Delta,{\cal O})$ onto $L^2_{2\pi/3}({\cal O}).$
\end{theorem}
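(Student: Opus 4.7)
The plan is to treat the two claims of the theorem in turn. For the commutation of $\Delta$ with a unitary transform $U$ of $\R^2$, I would start with smooth $u$ and represent $U$ as an orthogonal matrix $Q$ (so $Q^{T}Q = I$). A direct chain-rule computation gives $\nabla(u\circ U)(x) = Q^{T}(\nabla u)(Ux)$ and $\mathrm{Hess}(u\circ U)(x) = Q^{T}\,\mathrm{Hess}(u)(Ux)\,Q$; taking traces and using the cyclicity of the trace together with $Q^{T}Q = I$ yields $\Delta(u\circ U)(x) = (\Delta u)(Ux)$. I would then extend this identity from $C^{\infty}$ to $H^{1}(\Delta, {\cal O})$ by a density argument, using that $v\mapsto v\circ U$ is a Hilbert isomorphism of $L^{2}({\cal O})$ and of $H^{1}({\cal O})$ (since $U$ is a rigid motion) and hence continuous for the graph topology of $H^{1}(\Delta, {\cal O})$.

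Applying this commutation formula with $U = \Theta_{2\pi/3}$ yields the second claim almost immediately. If $u \in H^{1}_{2\pi/3}(\Delta, {\cal O})$, i.e.\ $u = u\circ\Theta_{2\pi/3}$ in $H^{1}({\cal O})$ with $\Delta u \in L^{2}({\cal O})$, then $\Delta u = \Delta(u\circ\Theta_{2\pi/3}) = (\Delta u)\circ\Theta_{2\pi/3}$, so $\Delta u$ inherits hexagonal symmetry and lies in $L^{2}_{2\pi/3}({\cal O})$. Read as a surjectivity statement, one would additionally construct a preimage of any $f \in L^{2}_{2\pi/3}({\cal O})$ by solving a well-posed auxiliary boundary value problem (for instance the homogeneous Dirichlet problem for $\Delta$ on ${\cal O}$ when it is bounded, or a shifted dissipative Helmholtz problem consistent with assumption (A2) in the unbounded case, followed by reabsorbing the lower-order term): both the right-hand side and the zero boundary data being $\Theta_{2\pi/3}$-invariant, the rotated solution $u\circ\Theta_{2\pi/3}$ solves the same problem, so uniqueness forces $u = u\circ\Theta_{2\pi/3}$ and hence $u \in H^{1}_{2\pi/3}(\Delta, {\cal O})$.

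The only genuinely delicate point is the extension of the chain-rule identity to $H^{1}(\Delta, {\cal O})$, which has to be handled either by density of smooth functions or by a distributional change-of-variables argument verifying that both sides of $\Delta(u\circ U) = (\Delta u)\circ U$ are continuous in the graph norm. The remainder — the trace computation, the inheritance of hexagonal symmetry by $\Delta u$ from $u$, and the uniqueness-plus-rotation device underlying the surjectivity — is essentially bookkeeping once the commutation relation is in hand. I expect this density step, and in the surjective reading the choice of an auxiliary problem well-posed on the particular ${\cal O}$ of interest, to be the main (modest) obstacle.
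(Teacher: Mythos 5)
The paper states Theorem~\ref{THsec3_1} \emph{without any proof}, so there is no argument of record to compare yours against. Your argument for the core content is correct and is surely the intended one: for an orthogonal $Q$ the chain rule gives $\nabla(u\circ U)=Q^{T}(\nabla u)(U\cdot)$ and $\mathrm{Hess}(u\circ U)=Q^{T}\,\mathrm{Hess}(u)(U\cdot)\,Q$, taking traces gives $\Delta(u\circ U)=(\Delta u)\circ U$, and the identity passes to $H^1(\Delta,{\cal O})$ either by density or directly in the distributional sense (composition with a rigid motion preserving ${\cal O}$ is an isometry of $L^2({\cal O})$ and of $H^1({\cal O})$). Applied with $U=\Theta_{2\pi/3}$ this shows that $\Delta$ maps $H^1_{2\pi/3}(\Delta,{\cal O})$ \emph{into} $L^2_{2\pi/3}({\cal O})$, which is the only part of the statement the paper uses later.

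The one place you overreach is in taking ``onto'' literally as surjectivity. First, under that reading the statement is false for unbounded ${\cal O}$: for ${\cal O}=\R^2$ (and the exterior domain $\Omega^e$ actually used in the paper is of the same nature), a generic $f\in L^2(\R^2)$ with hexagonal symmetry has no preimage $u\in H^1(\R^2)$ with $\Delta u=f$, as one sees on the Fourier side from the low-frequency divergence of $\widehat f(\xi)/\abs{\xi}^2$. So ``onto'' here must be the paper's loose usage of ``into,'' consistent with how the same word is used elsewhere (e.g.\ for the operator $\Lambda$). Second, even where surjectivity does hold, your proposed device does not produce a preimage: solving the shifted problem $\Delta u-u=f$ yields $\Delta u=f+u\neq f$, and ``reabsorbing the lower-order term'' is not an operation that converts this into $\Delta u=f$; on a bounded ${\cal O}$ you would instead solve the Dirichlet problem $\Delta u=f$, $u=0$ on $\partial{\cal O}$ directly, after which your uniqueness-plus-rotation argument correctly gives $u=u\circ\Theta_{2\pi/3}$. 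None of this affects the invariance statement, for which your proof is complete.
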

\noindent We can now extend Definition~\eqref{eq:def_Hs} to the space $H^{-1/2}(\partial {\cal O})$ where ${\cal O}$ is with hexagonal symmetry (typically ${\cal O} = \Omega^e$ or $\Omega^i$ and $\partial{\cal O}=\Sigma^i$)
\begin{definition}
\label{defenstrace}
Let ${\cal O}$ be an open set of $\R^2$ with hexagonal symmetry. 
We define the closed subspace $H^{-1/2}_{2\pi/3}(\partial {\cal O})$ of $H^{-1/2}(\partial {\cal O})$ by
$$H^{-1/2}_{2\pi/3}(\partial {\cal O})=\left\{\gamma_1 u=\left.\frac{\partial u}{\partial \nu}\right|_{\partial {\cal O}}, u \in H^1_{2\pi/3}(\Delta,{\cal O})\right\}$$
where $\nu$ is the outgoing unit normal to ${\cal O}$.\\\\
Obviously, $\gamma_1$ is a continuous application from $H^1_{2\pi/3}(\Delta,{\cal O})$ onto $H^{-1/2}_{2\pi/3}(\partial {\cal O}).$
\end{definition}

\subsection{Restriction and extension operators}
\label{subsect_RestrictionOperators}
Let $R$ be the restriction operator defined by 
\[
	\begin{array}{rcl}
	R\::\:L^2(\Sigma^i)&\rightarrow& L^2(\Sigma^0)\\[5pt]
	\phi&\mapsto& \phi|_{\Sigma^0}
	\end{array}
\]
where $\Sigma^0$ is the right part of $\Sigma^i$ (see Figure~\ref{fig:geometry}). One can easily check that $R$ defines an isomorphism from the subspace $L^2_{2\pi/3}(\Sigma^i):=\{ v \in L^2(\Sigma^i),~v=v \circ \Theta_{2\pi/3} \}$ onto $L^2(\Sigma^0)$ that we shall denote by $R_{2\pi/3}$. Its inverse $E_{2\pi/3}$ is an extension operator which can be given explicitly thanks to the rotation $\Theta_{2\pi/3}$:
\[
	\forall \phi \in L^2(\Sigma^0),\quad 
	\begin{array}{|l} \left.E_{2\pi/3}\phi\right|_{\Sigma^0} = \phi\\[3pt]
	\left.E_{2\pi/3}\phi\right|_{\Theta_{2\pi/3}\Sigma^0} =\phi\circ  \Theta_{-2\pi/3}\ \\[3pt]
	\left.E_{2\pi/3}\phi\right|_{\Theta_{2\pi/3}^2\Sigma^0} = \phi\circ  \Theta_{-2\pi/3}^2
\end{array}
\]
We define the following space
$$
\begin{array}{rcl}
	\displaystyle H^{1/2}_{2\pi/3}(\Sigma^0) &:=& \displaystyle\left\{R_{2\pi/3}\phi,\; \phi \in H^{1/2}_{2\pi/3}(\Sigma^i)\right\}, \\[5pt]
	&=& \displaystyle \left\{\phi\in H^{1/2}(\Sigma^0),\; E_{2\pi/3}\phi \in H^{1/2}_{2\pi/3}(\Sigma^i)\right\}.
\end{array}
$$
This space is nothing but the space of functions in $H^{1/2}(\Sigma^0)$ which are periodic.\\\\
We now explain how to extend the restriction operator $R_{2\pi/3}$ to $H^{-1/2}_{2\pi/3}(\Sigma^i)$ (see Definition \ref{defenstrace}). This can be done by duality, noticing that we have
\[
	\forall\phi\in L^2_{2\pi/3}(\Sigma^i),\;\forall\psi\in L^2(\Sigma^0),\quad \left( R_{2\pi/3}\phi,\psi\right)_{\Sigma^0} = \frac{1}{3}\left( \phi,E_{2\pi/3}\psi\right)_{\Sigma^i},
\]
where $(\cdot,\cdot)_{\Sigma^0}$ (resp. $(\cdot,\cdot)_{\Sigma^i}$) is the scalar product in $L^2(\Sigma^0)$ (resp. $L^2(\Sigma^i)$). This last relation suggests an extension of $R_{2\pi/3}$ to $H^{-1/2}_{2\pi/3}(\Sigma^i)$ by
\[
	\forall\phi\in H^{-1/2}_{2\pi/3}(\Sigma^i),\;\forall\psi\in H^{1/2}_{2\pi/3}(\Sigma^0),\quad \langle R_{2\pi/3}\phi,\psi\rangle_{\Sigma^0} = \frac{1}{3}\langle \phi,E_{2\pi/3}\psi\rangle_{\Sigma^i},
\]
where $
\langle\cdot,\cdot\rangle_{\Sigma^0}$ (resp. $\langle\cdot,\cdot\rangle_{\Sigma^i}$) is the duality product between the two spaces $[H^{1/2}_{2\pi/3}(\Sigma^0)]^\prime$ and $H^{1/2}_{2\pi/3}(\Sigma^0)$ (resp. $H^{-1/2}_{2\pi/3}(\Sigma^i)$ and $H^{1/2}_{2\pi/3}(\Sigma^i)$ ).
We introduce the closed subspace of $\left[H^{1/2}_{2\pi/3}(\Sigma^0)\right]^\prime$:
\[
	H^{-1/2}_{2\pi/3}(\Sigma^0) := R_{2\pi/3}\left(H^{-1/2}_{2\pi/3}(\Sigma^i)\right),
\]
and conclude that $R_{2\pi/3}$ is a linear continuous map from $H^{-1/2}_{2\pi/3}(\Sigma^i)$ onto $H^{-1/2}_{2\pi/3}(\Sigma^0)$.\\\\
Analogously, $E_{2\pi/3}$ can be extended to a linear continuous mapping from $H^{-1/2}_{2\pi/3}(\Sigma^0)$ onto $H^{-1/2}_{2\pi/3}(\Sigma^i)$ using:
\[
	\forall\psi\in H^{-1/2}_{2\pi/3}(\Sigma^0),\;\forall\phi\in H^{1/2}_{2\pi/3}(\Sigma^i),\quad \langle E_{2\pi/3}\psi,\phi\rangle_{\Sigma^i} = 3\langle \psi,R_{2\pi/3}\phi\rangle_{\Sigma^0}.
\]

\subsection{Symmetry properties for the exterior problem}
Using the definitions and properties of media and functions with hexagonal symmetries and considering assumptions (A1) and (A2), one can show the following theorem:
\begin{theorem}
\label{corsec3_1}
If $\phi \in H^{1/2}_{2\pi/3}(\Sigma^i)$, then the unique solution $u^e(\phi)$ of \eqref{Pbext} belongs to $H^{1}_{2\pi/3}(\Delta,\Omega^e)$ and $\Lambda \phi \in H^{-1/2}_{2\pi/3}(\Sigma^i)$, where $\Lambda$ is the DtN operator defined in \eqref{eqLambda}.
\end{theorem}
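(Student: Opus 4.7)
\medskip

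The plan is a standard uniqueness-plus-symmetry argument, exploiting assumption \textbf{(A1)} (both $\rho_{\rm per}$ and $\rho_0$ have hexagonal symmetry, so $\rho$ does), the fact that $\Omega^e$ is invariant under $\Theta_{2\pi/3}$, and the dissipation \textbf{(A2)} that guarantees the well-posedness of \eqref{Pbext} in $H^1(\Omega^e)$. The existence of $u^e(\phi) \in H^1(\Omega^e)$ being already available, the only real content of the theorem is the hexagonal invariance of this solution, from which the two claimed membership statements will follow at once.

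\medskip

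First, I would introduce the rotated function $v := u^e(\phi) \circ \Theta_{2\pi/3}$. Since the unitary change of variables $\Theta_{2\pi/3}$ preserves $H^1$ and commutes with the Laplacian (Theorem \ref{THsec3_1}), and since $\rho \circ \Theta_{2\pi/3} = \rho$ by \textbf{(A1)}, one checks that $v \in H^1(\Omega^e)$ satisfies $\Delta v + \rho v = 0$ in $\Omega^e$ in the distributional sense. For the boundary trace, I use that $\Theta_{2\pi/3}$ leaves $\Sigma^i$ invariant and that $\phi \in H^{1/2}_{2\pi/3}(\Sigma^i)$ satisfies $\phi \circ \Theta_{2\pi/3} = \phi$, so $\gamma_0 v = \phi$ on $\Sigma^i$. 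Hence $v$ solves exactly the same exterior problem \eqref{Pbext} as $u^e(\phi)$, and uniqueness (guaranteed by \textbf{(A2)} via Lax--Milgram) forces $v = u^e(\phi)$. This gives $u^e(\phi) \in H^1_{2\pi/3}(\Omega^e)$.

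\medskip

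Once this invariance is in hand, the remaining conclusions are essentially bookkeeping. From the equation $\Delta u^e(\phi) = -\rho\, u^e(\phi) \in L^2(\Omega^e)$ (since $\rho$ is bounded and $u^e(\phi) \in L^2$), and since $\rho$ and $u^e(\phi)$ both have hexagonal symmetry, their product does too, so $\Delta u^e(\phi) \in L^2_{2\pi/3}(\Omega^e)$. Therefore $u^e(\phi) \in H^1_{2\pi/3}(\Delta, \Omega^e)$. The definition of $\Lambda$ in \eqref{eqLambda} reads $\Lambda \phi = -\gamma_1^{\Omega^i} u^e(\phi)$ on $\Sigma^i$, and because $\nu^i$ is the outgoing normal to $\Omega^i$, it coincides with minus the outgoing normal to $\Omega^e$; thus $\Lambda \phi$ is exactly the $H^{-1/2}$ normal trace of an element of $H^1_{2\pi/3}(\Delta,\Omega^e)$ on the hexagonally symmetric boundary $\Sigma^i = \partial \Omega^e$. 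By Definition \ref{defenstrace}, this puts $\Lambda \phi$ in $H^{-1/2}_{2\pi/3}(\Sigma^i)$.

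\medskip

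I do not anticipate any real obstacle. The only minor subtleties are (i) making sure the invariance of the boundary data survives the change of variables, which uses both that $\Sigma^i$ is globally preserved by $\Theta_{2\pi/3}$ and that $\phi \in H^{1/2}_{2\pi/3}(\Sigma^i)$, and (ii) being careful about signs and orientations of the normal when transferring the conclusion from $\Omega^e$ to the definition of $H^{-1/2}_{2\pi/3}(\Sigma^i)$. Both are routine once stated.
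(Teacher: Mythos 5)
Your proposal is correct and follows essentially the same route as the paper: a uniqueness argument showing that the rotated solution solves the same exterior problem, hence coincides with $u^e(\phi)$, with the membership $\Lambda\phi \in H^{-1/2}_{2\pi/3}(\Sigma^i)$ then following from the definition of the normal trace space. Your version simply spells out the details (symmetry of $\rho$, invariance of $\Sigma^i$, recovering $\Delta u^e(\phi)\in L^2_{2\pi/3}$ from the equation) that the paper leaves implicit.
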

\begin{proof}
It suffices to prove that $u^e(\Theta_{2\pi/3}\phi)$ is also a solution of \eqref{Pbext} and then conclude using a uniqueness argument. The last implication follows immediately from the definition of the normal trace operator.
\end{proof}

\noindent A consequence of Theorem \ref{corsec3_1} is that $\Lambda$ maps continuously $H^1_{2\pi/3}(\Sigma^i)$ onto $H^{-1/2}_{2\pi/3}(\Sigma^i)$, leading to the natural definition of the DtN operator
\begin{equation}\label{Lambda2pi3}
\Lambda_{2\pi/3}:=\left.\Lambda\right|_{H^{1/2}_{2\pi/3}(\Sigma^i)} \in \cL (H^{1/2}_{2\pi/3}(\Sigma^i),H^{-1/2}_{2\pi/3}(\Sigma^i)).
\end{equation}
Finally, taking into account the assumptions {\bf (A1)-(A3)} concerning the hexagonal symmetry of the local perturbation $\rho_0$ and of the source term $f$, and using uniqueness argument, we deduce that
\begin{theorem}\label{th:Interior_pb}
	Let $u$ be the unique solution of \eqref{Helmholtz}. Then the restriction $u^i=u\big|_{\Omega^i}$ of $u$ is the unique solution of the interior boundary value problem
	\[
	\left\{
	\begin{array}{ll}
	\Delta u^i + \rho u^i = f,&\qquad\hbox{in }\Omega^i, \\[5pt]
	\displaystyle\frac{\partial u^i}{\partial \nu^i} + \Lambda_{2\pi/3} u^i=0,&\qquad\hbox{on } \Sigma^i.
	\end{array}
	\right.
	\]
\end{theorem}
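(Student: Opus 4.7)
The theorem decomposes naturally into three ingredients. First I will verify that $u$ itself has $2\pi/3$ rotational symmetry, so that the trace $u^i|_{\Sigma^i}$ lies in the space $H^{1/2}_{2\pi/3}(\Sigma^i)$ on which $\Lambda_{2\pi/3}$ is defined. Second, I will identify the exterior restriction $u|_{\Omega^e}$ with the harmonic extension $u^e(\phi)$ for $\phi=u^i|_{\Sigma^i}$. Third, the boundary condition will follow from the standard continuity of the normal trace across $\Sigma^i$.

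For the first point, set $\tilde u := u\circ \Theta_{2\pi/3}$. By Theorem~\ref{THsec3_1} the Laplacian commutes with the rotation, and by Assumptions \textbf{(A1)} and \textbf{(A3)} both $\rho$ and $f$ are invariant under $\Theta_{2\pi/3}$. A direct substitution then shows that $\tilde u$ satisfies \eqref{Helmholtz} in $\Omega=\R^2$. The Lax--Milgram argument recalled after Assumption \textbf{(A2)} guarantees uniqueness in $H^1(\Omega)$, hence $\tilde u = u$. Since $\Omega^i$ is invariant under $\Theta_{2\pi/3}$, the restriction $u^i$ belongs to $H^1_{2\pi/3}(\Omega^i)$ and its trace $\phi := u^i|_{\Sigma^i}$ lies in $H^{1/2}_{2\pi/3}(\Sigma^i)$, so that $\Lambda_{2\pi/3}\phi$ is well-defined.

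For the second point, Assumption \textbf{(A3)} gives $\mathrm{Supp}(f)\subset \Omega^i$, so $u^e:=u|_{\Omega^e}$ satisfies $\Delta u^e+\rho\, u^e=0$ in $\Omega^e$ with Dirichlet datum $\phi$ on $\Sigma^i$. Uniqueness of the exterior problem \eqref{Pbext} (itself a consequence of \textbf{(A2)}) identifies $u^e$ with $u^e(\phi)$, so by the definition \eqref{eqLambda},
\[
\Lambda_{2\pi/3}\phi \;=\; -\,\frac{\partial u^e}{\partial \nu^i}\bigg|_{\Sigma^i}.
\]

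For the third point, since $u\in H^1(\Omega)$ and $\Delta u = f-\rho u\in L^2(\Omega)$, the pieces $u^i$ and $u^e$ belong to $H^1(\Delta,\Omega^i)$ and $H^1(\Delta,\Omega^e)$ respectively, so their normal traces along $\Sigma^i$ in the direction $\nu^i$ are meaningful elements of $H^{-1/2}(\Sigma^i)$. A standard integration-by-parts argument localised near $\Sigma^i$ shows that these traces coincide, yielding
\[
\frac{\partial u^i}{\partial \nu^i}\bigg|_{\Sigma^i} \;=\; \frac{\partial u^e}{\partial \nu^i}\bigg|_{\Sigma^i} \;=\; -\,\Lambda_{2\pi/3}\, u^i|_{\Sigma^i},
\]
which is the desired transparent boundary condition. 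The only mild technical point is the bookkeeping of the $2\pi/3$-symmetric subspaces introduced in Section~\ref{Pbdefault}; there is no real obstacle, since the whole argument is a uniqueness-plus-symmetry exercise once the well-posedness of \eqref{Helmholtz} and \eqref{Pbext} guaranteed by \textbf{(A2)} is in hand.
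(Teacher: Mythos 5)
Your argument is correct as far as it goes, and it matches the route the paper intends: the paper states this theorem without a written proof, attributing it to the hexagonal symmetry of $\rho_0$ and $f$ together with ``a uniqueness argument,'' and your three steps (symmetry of $u$ by uniqueness of \eqref{Helmholtz}, identification of $u|_{\Omega^e}$ with $u^e(\phi)$ by uniqueness of \eqref{Pbext}, and matching of normal traces across $\Sigma^i$) are exactly the details that sketch leaves implicit.

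There is, however, one half of the statement you do not address: the theorem asserts that $u^i$ is the \emph{unique} solution of the interior boundary value problem, whereas you only show that $u^i$ \emph{is} a solution. To close this you need the converse gluing argument: let $v\in H^1(\Delta,\Omega^i)$ be any solution of the interior problem (its trace must lie in $H^{1/2}_{2\pi/3}(\Sigma^i)$ for $\Lambda_{2\pi/3}v$ to be meaningful), and define $w$ on $\Omega$ by $w=v$ in $\Omega^i$ and $w=u^e\bigl(v|_{\Sigma^i}\bigr)$ in $\Omega^e$. The Dirichlet traces of the two pieces agree by construction, and the boundary condition $\partial v/\partial\nu^i+\Lambda_{2\pi/3}v=0$ together with the definition \eqref{eqLambda} says precisely that the normal traces agree as well, so $w\in H^1(\Omega)$ with $\Delta w\in L^2(\Omega)$ and $w$ solves \eqref{Helmholtz}. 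Uniqueness for \eqref{Helmholtz} (Lax--Milgram under \textbf{(A2)}) then gives $w=u$, hence $v=u^i$. Without this step the transparent boundary condition has not been shown to characterize $u^i$, which is the whole point of the reduction to $\Omega^i$.
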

\noindent The rest of the paper is devoted to the determination of the DtN operator $\Lambda_{2\pi/3}$.
\section{Factorization of the DtN operator}\label{Sect_FACT}

\noindent First of all let us recall some useful notation. Let $\Sigma^H$ be the boundary depicted in Figure~\ref{notationssec3} and let $\Omega^H$ be the half-space to right of $\Sigma^H$. \\\\
Let $\Lambda^H\in {\cal L}(H^{1/2}(\Sigma^H),H^{-1/2}(\Sigma^H))$ be the half-space DtN operator (see Figure~\ref{notationssec3})
\begin{equation}\label{eqLambdaH}
\Lambda^H \phi =\left. \frac{\partial u^H(\phi)}{\partial\nu^H}\right|_{\Sigma^H}
\end{equation}
where $\nu^H$ is the exterior normal to $\Omega^H$ and $u^H(\phi)$ is the unique solution in $H^1(\Delta,\Omega^H)$ of the half-space problem
\begin{equation}
\label{PbDemiespsec3}
(\mathcal{P}^H)\quad\left\{
\begin{array}{ll}
\Delta u^H(\phi) + \rho u^H(\phi) =0, &\qquad\hbox{in } \Omega^H, \\[5pt]
u^H(\phi)= \phi, & \qquad\hbox{on }\Sigma^H.
\end{array}
\right.
\end{equation}
Let $D_{2\pi/3}\in {\cal L}( H^{1/2}(\Sigma^i),H^{1/2}(\Sigma^H))$ be the DtD operator defined by (see Figure~\ref{fig:d2pis3}):
\begin{equation}\label{eqD2pi3}
D_{2\pi/3}\phi = \left.u^e(\phi)\right|_{\Sigma^H}
\end{equation}
where $u^e(\phi)$ is the unique solution of \eqref{Pbext}.
\\\\
Moreover, we need to introduce a restriction operator $R^H$ from $\Sigma^H$ to $\Sigma^0$. As we need to apply this operator to functions of $H^{-1/2}(\Sigma^H)$, $R^H$ has to be defined in a weak sense. We denote by $E^H\in {\cal L}(L^2(\Sigma^0),L^2(\Sigma^H))$  the extension operator by zero from $\Sigma^0$ to $\Sigma^H $:
$$
E^H \phi  = 
\left\{
\begin{array}{ll}
\phi & \quad \mbox{on }\Sigma^0,\\
0 &  \quad\mbox{on } \Sigma^H\setminus\Sigma^0.
\end{array}
\right.
$$
Let $H^{1/2}_{00}(\Sigma^0)$ be the subspace of $H^{1/2}(\Sigma^0)$ defined by:
$$
H^{1/2}_{00}(\Sigma^0) =\{\phi\in H^{1/2}(\Sigma^0) \mid E^H\phi \in H^{1/2}(\Sigma^H)\},
$$ 
and let ${\widetilde H}^{-1/2}(\Sigma^0) = \left(H^{1/2}_{00}(\Sigma^0)\right)'$ be its dual space. The restriction operator $R^H\in {\cal L}(H^{-1/2}(\Sigma^H),{\widetilde H}^{-1/2}(\Sigma^0))$  can then be defined by duality
\begin{multline}\label{eqRestriction}
\qquad
\langle R^H \phi,\psi \rangle_{{\widetilde H}^{-1/2}(\Sigma^0),H^{1/2}_{0,0}(\Sigma^0)} = \langle \phi,E^H\psi \rangle_{H^{-1/2}(\Sigma^H),H^{1/2}(\Sigma^H)},\\
  \forall(\phi,\psi)\in H^{-1/2}(\Sigma^H)\times H^{1/2}_{00}(\Sigma^0).\qquad
\end{multline}
The main result of this section reads as follows.
\begin{theorem}
\label{THsec3_2} 
The operator $R^H \circ \Lambda^H \circ D_{2\pi/3}$ maps $H^{1/2}_{2\pi/3}(\Sigma^i)$ into $H^{-1/2}_{2\pi/3}(\Sigma_0)$ and $\Lambda_{2\pi/3}\in \cL (H^{1/2}_{2\pi/3}(\Sigma^i),H^{-1/2}_{2\pi/3}(\Sigma^i))$ defined by \eqref{Lambda2pi3} admits the factorization
$$\forall \phi\in H^{1/2}_{2\pi/3}(\Sigma^i),\quad\Lambda_{2\pi/3}\,\phi =E_{2\pi/3} \circ R^H \circ \Lambda^H \circ D_{2\pi/3}\phi$$ 
where
\begin{itemize}
	\item $D_{2\pi/3}\in {\cal L}( H^{1/2}(\Sigma^i),H^{1/2}(\Sigma^H))$ is the DtD operator defined by \eqref{eqD2pi3},
	\item $\Lambda^H\in {\cal L}(H^{1/2}(\Sigma^H),H^{-1/2}(\Sigma^H))$ is the half-space DtN operator defined by \eqref{eqLambdaH},
	\item $R^H\in {\cal L}(H^{-1/2}(\Sigma^H),{\widetilde H}^{-1/2}(\Sigma^0))$ is the restriction operator defined by \eqref{eqRestriction},
	\item $E_{2\pi/3} \in {\cal L}(H^{-1/2}_{2\pi/3}(\Sigma_0),H^{-1/2}_{2\pi/3}(\Sigma^i))$ is the extension  operator by symmetry defined in Section \ref{subsect_RestrictionOperators}.
\end{itemize}
\end{theorem}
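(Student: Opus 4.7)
The plan is to make rigorous the informal argument of Step 1 of Section 2, reducing the factorization to a uniqueness statement for the half-space problem $(\mathcal{P}^H)$ and to the trace bookkeeping introduced in Section \ref{subsect_RestrictionOperators}. Fix $\phi\in H^{1/2}_{2\pi/3}(\Sigma^i)$. By Theorem \ref{corsec3_1} the exterior solution $u^e(\phi)$ lies in $H^{1}_{2\pi/3}(\Delta,\Omega^e)$ and $\Lambda_{2\pi/3}\phi\in H^{-1/2}_{2\pi/3}(\Sigma^i)$; in particular $D_{2\pi/3}\phi=u^e(\phi)|_{\Sigma^H}$ is a well-defined element of $H^{1/2}(\Sigma^H)$. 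Since $\Omega^H\subset\Omega^e$, the restriction $u^e(\phi)|_{\Omega^H}$ lies in $H^1(\Delta,\Omega^H)$, solves $\Delta v+\rho v=0$ in $\Omega^H$, and satisfies $v=D_{2\pi/3}\phi$ on $\Sigma^H$. Well-posedness of $(\mathcal{P}^H)$ (Lax--Milgram under assumption (A2)) forces
$$
u^e(\phi)\big|_{\Omega^H}=u^H\bigl(D_{2\pi/3}\phi\bigr).
$$

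\textbf{Matching of normal traces on $\Sigma^0$.} Both sides of the identity above belong to $H^1(\Delta,\Omega^H)$, hence have identical outer normal traces on $\Sigma^H$ as elements of $H^{-1/2}(\Sigma^H)$. To turn this equality into one on $\Sigma^0$, I test against $E^H\psi$ with $\psi\in H^{1/2}_{00}(\Sigma^0)$. The test function is supported in $\Sigma^0$, so any $H^1$-lifting of $E^H\psi$ to $\Omega^e$ is admissible in Green's identity for $u^e(\phi)$, and its restriction to $\Omega^H$ is admissible in Green's identity for $u^H(D_{2\pi/3}\phi)$; the two resulting volume integrals agree by the uniqueness statement above. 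Taking into account $\nu^H=-\nu^i$ on $\Sigma^0$ together with the minus sign in \eqref{eqLambda}, the boundary terms give
$$
\langle \Lambda^H D_{2\pi/3}\phi,\,E^H\psi\rangle_{\Sigma^H}=\langle \Lambda_{2\pi/3}\phi,\,E^H\psi\rangle_{\Sigma^i},
$$
which, by the definition \eqref{eqRestriction} of $R^H$, means $R^H\Lambda^H D_{2\pi/3}\phi=(\Lambda_{2\pi/3}\phi)|_{\Sigma^0}$ in $\widetilde{H}^{-1/2}(\Sigma^0)$.

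\textbf{Reconstruction by symmetry.} Because $\Lambda_{2\pi/3}\phi\in H^{-1/2}_{2\pi/3}(\Sigma^i)$, its restriction to $\Sigma^0$ actually lies in the smaller space $H^{-1/2}_{2\pi/3}(\Sigma^0)=R_{2\pi/3}\bigl(H^{-1/2}_{2\pi/3}(\Sigma^i)\bigr)$, which simultaneously gives the mapping claim of the theorem and allows applying the symmetric extension $E_{2\pi/3}$. Since $E_{2\pi/3}$ is the inverse of $R_{2\pi/3}$ on symmetric traces,
$$
\Lambda_{2\pi/3}\phi=E_{2\pi/3}\bigl(R_{2\pi/3}\Lambda_{2\pi/3}\phi\bigr)=E_{2\pi/3}\circ R^H\circ \Lambda^H\circ D_{2\pi/3}\,\phi,
$$
which is the claimed factorization.

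\textbf{Main obstacle.} The only nontrivial point is the duality reconciliation in the second step: two distinct pairings sit naturally on $\Sigma^0$, namely the one encoded in $R^H$ via the zero extension $E^H$ to $\Sigma^H$ and the one encoded in $R_{2\pi/3}$ via the $1/3$-weighted symmetric extension $E_{2\pi/3}$ to $\Sigma^i$. One must check that, evaluated on the symmetric distribution $\Lambda_{2\pi/3}\phi$, both produce the same element of $\widetilde{H}^{-1/2}(\Sigma^0)$, so that $E_{2\pi/3}$ may be legitimately applied to $R^H\Lambda^H D_{2\pi/3}\phi$. This is a careful bookkeeping of Green's identities on $\Omega^H$ and $\Omega^e$; once settled, everything else reduces to the uniqueness of $(\mathcal{P}^H)$.
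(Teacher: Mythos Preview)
Your proof is correct and follows essentially the same route as the paper: identify $u^e(\phi)|_{\Omega^H}$ with $u^H(D_{2\pi/3}\phi)$ by uniqueness of $(\mathcal{P}^H)$, match the normal traces on $\Sigma^0$, and then invert $R_{2\pi/3}$ via $E_{2\pi/3}$. The only difference is that you spell out the duality bookkeeping (the reconciliation of $R^H$ with $R_{2\pi/3}$ on $\Sigma^0$) that the paper's proof treats as immediate.
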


\begin{proof}
Let $\phi \in H^{1/2}_{2\pi/3}(\Sigma^i).$ From the definition of $D_{2\pi/3}$, the functions $u^e(\phi)\left|_{\Omega^H}\right.$ and $u^H(D_{2\pi/3}\phi)$ satisfy the half-space Helmholtz problem \eqref{PbDemiespsec3} with the same Dirichlet condition on $\Sigma^H$, namely $\psi=D_{2\pi/3} \phi.$ The uniqueness of the solution of this problem implies that $u^e(\phi)\left|_{\Omega^H}\right.=u^H(D_{2\pi/3}\phi)$ and in particular, the traces of their normal derivatives on $\Sigma^0$ coincide, yielding
$$
R_{2\pi/3}\left(-\left.\frac{\partial u^e(\phi)}{\partial \nu^i}\right|_{\Sigma^i}\right) = (R_H \circ \Lambda^H \circ D_{2\pi/3})\phi.
$$
where $R_{2\pi/3}$ is the restriction operator by symmetry defined in Section \ref{subsect_RestrictionOperators}. This relation proves the first part of the theorem. For the second part, we just use that $E_{2\pi/3}$ is the inverse of $R_{2\pi/3}$.
\end{proof}

\noindent In section \ref{SPbdemiesp}, we explain how to compute the half-space DtN operator with the help of an adapted version of the Floquet-Bloch transform defined in \ref{ssub:floquet_bloch_transform} and the resolution of a family of half-space problems (\ref{PbDemiespsec3}) with $k-$quasiperiodic boundary conditions. Section \ref{sec:opextension} deals with the characterization of the DtD operator $D_{2\pi/3}$. The computation of $D_{2\pi/3}$  {\it a priori} requires to compute the solutions $u^e$ of the exterior problem \eqref{Pbext} defined in an unbounded domain. We explain, using the half-space problem and the properties of the problem, how to obtain a characterization of this operator which avoid the solution of the exterior problem. 


\section{Characterization of the half-space DtN operator}
\label{SPbdemiesp}
In this section, we tackle the Dirichlet half-space problem. In other words, for any $\phi\in H^{1/2}(\Sigma^H)$ we want to compute the solution $u^H(\phi)$ in $H^1(\Delta,\Omega^H)$ of
\[
(\mathcal{P}^H) \quad
\left\{
\begin{array}{ll}
\Delta u^H(\phi) + \rho u^H(\phi) =0, &\qquad\hbox{in } \Omega^H, \\[5pt]
u^H(\phi)= \phi, & \qquad\hbox{on }\Sigma^H,
\end{array}
\right.
\]
(see Figure~\ref{notationssec3} for notations). We will deduce a characterization of the half-space DtN operator $\Lambda^H$
\[
\Lambda^H \phi =\left. \frac{\partial u^H(\phi)}{\partial\nu^H}\right|_{\Sigma^H}
\]
where $\nu^H$ is the exterior normal to $\Omega^H$.
\begin{remark}\label{rem:PH_autresinterets}
The half-space problem is not only interesting as a step of our approach to analyze transparent boundary conditions for locally perturbed hexagonal periodic media. Indeed, it also appears naturally in transmission problems between a homogeneous medium and a hexagonal periodic one (see \cite{BonRam02a} where such problems are considered for the case of one dimensional type periodic media and \cite{fliss_cassan} for the case of square lattices.)
\end{remark}

\noindent We develop a method for computing the solution of \eqref{PbDemiespsec3} and the operator $\Lambda^H$, by adapting the method developed in \cite{TheseSonia,FlissAPNUM}. In these works, the half-space solution and then the half-space DtN operator are computed using the Floquet Bloch transform. More precisely, this is done  via the resolution of a family of waveguide problems with quasiperiodic conditions, each waveguide solution being computed thanks to the resolution of elementary cell problems and a stationary Riccati equation. In our case, this approach cannot be directly transposed since 
\begin{itemize}
	\item the waveguide boundary would not correspond to a physical boundary (see the shaded domain of Figure~\ref{fig:omegaw});
	\item but most importantly, computing $D_{2\pi/3}$ would be much more intricate (see Remark \ref{rem:FBT}).
\end{itemize}
Instead, the half-space problem is handled by solving a family of half-space problems with $k-$quasiperiodic boundary conditions. 

\subsection{The half-space problem: from arbitrary data to quasiperiodic data} 
\label{sub:solution_of_the_half-space_problem}
\subsubsection{The Floquet-Bloch (FB) transform and its properties} 
\label{ssub:floquet_bloch_transform}
Following \cite{TheseSonia,Ku01}, we recall below the definition of the FB transform and state without proof its main properties.\\\\
Set ${\mathbb{K}}=\R\times\left(-{\pi}/{L},{\pi}/{L}\right)$. The FB transform of period $L$ (here $L = \sqrt{3}d$) is defined by 
$$\begin{array}{lccl}
\dsp {\cF}:&\cC^\infty_0(\R)&\rightarrow&\dsp L^2_\text{QP}({\mathbb{K}})\\[5pt]
\dsp&\phi(y)&\mapsto&\dsp{\cF}\phi(y;k)=\sqrt{\frac{L}{2\pi}}\sum_{q\in\Z}\phi(y+qL)e^{-\imath q k L}.
\end{array}$$
where $L^2_\text{QP}({\mathbb{K}})$ is the set of functions $\hat{f}\in L^2_{loc}$ such that for any $k\in \left(-{\pi}/{L},{\pi}/{L}\right)$, $\hat{f}(\cdot,k)$ is $k-$ quasi-periodic, that means $\hat{f}(y+nL,k)=\hat{f}(y,k)e^{\imath n k L}$. We equip this space by the norm of $L^2\left({\mathbb{K}_0}\right)$ where $\mathbb{K}_0=\left(-L/2,L/2\right)\times\left(-{\pi}/{L},{\pi}/{L}\right)$.\\\\
The operator ${\cF}$ can be extended as an isometry between $L^2(\R)$ and $L^2_\text{QP}({\mathbb{K}})$:
\[
	\forall \phi,\psi \in L^2(\R),\quad \langle {\cF}\phi,{\cF}\psi \rangle_{L^2_\text{QP}({\mathbb{K}})} =  \langle \phi,\psi \rangle_{L^2(\R)}.
\]
This transform is a privileged tool for the study of PDE with periodic coefficients because it commutes with
\begin{itemize}
	\item any differential operator ;
	\item the multiplication by any periodic function with period $L$.
\end{itemize}
The FB transform is invertible and the inversion formula is given by
\begin{equation}\label{eq:FBext_inv}
\forall y\in\R,\quad \phi(y)=\sqrt{\frac{L}{2\pi}}\int\limits_{-\pi/L}^{\pi/L}{\cF}\phi(y;k)dk.
\end{equation}
\noindent Next we define the partial FB transform in the $y$-direction in the half-space $\Omega^H$:
$$\begin{array}{lccl}
\dsp {\cF}_y:&L^2(\Omega^H)&\rightarrow&\dsp L^2_\text{QP}\left(\Omega^H\times\left(-\frac{\pi}{L},\frac{\pi}{L}\right)\right)\\[7pt]
\dsp&u(x,y)&\mapsto&\dsp
\dsp {\cF}_yu(x,y;k)
\end{array}$$
with 
\[
	\forall x,\quad ({\cF}_yu)(x,\cdot;\cdot) = {\cF}\left[u(x,\cdot)\right],
\]
and $L^2_\text{QP}\left(\Omega^H\times\left(-{\pi}/{L},{\pi}/{L}\right)\right)$ is the set of square integrable functions $\hat{f}$, locally in the $y-$ direction, such that for any $k\in \left(-{\pi}/{L},{\pi}/{L}\right)$, $\hat{f}(\cdot,k)$ is $k-$ quasi-periodic in the $y-$direction, that means $\hat{f}(\cdot,y+nL,k)=\hat{f}(\cdot,y,k)e^{\imath n k L}$. We equip this space by the norm of $L^2\left(\Omega^W\times\left(-{\pi}/{L},{\pi}/{L}\right)\right)$ 
where $\Omega^W = \Omega^H\cap\left\{y\in (-{L}/{2},{L}/{2})\right\}$ (see Figure~\ref{fig:omegaw}).
\begin{figure}[ht]
  \centering
\psfrag{a}[c][][1]{$\Omega^W$}
\psfrag{b}[c][][1]{$\Sigma_{00}^\ell$}
  \includegraphics[width=.6\textwidth]{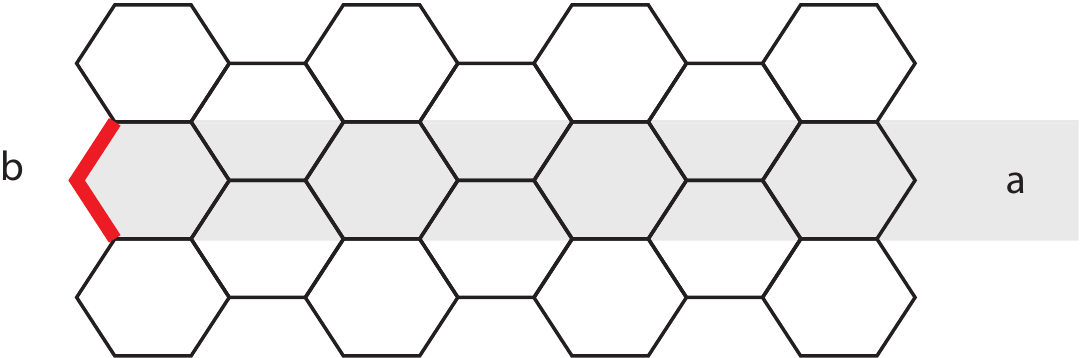}
  \caption{The domain $\Omega^W$ and its left boundary $\Sigma_{00}^\ell$.}
  \label{fig:omegaw}
\end{figure}
~\\\\It is easy to see that the partial FB transform ${\cF}_y$ defines an isomorphism from $L^2(\Omega^H)$ into $L^2_\text{QP}\left(\Omega^H\times(-{\pi}/{L},{\pi}/{L})\right)$.
\\\\
Now we want to know how the Floquet Bloch transform is defined or can be extended to the other functional spaces appearing in our study ($H^1(\Omega^H,\triangle),\,H^{1/2}(\Sigma^H)$ and $H^{-1/2}(\Sigma^H)$). 
To make a rigorous presentation, we need to introduce, $k$ being a parameter between $-\pi/L$ and $\pi/L$,  the so called  ${k}$-quasiperiodic extension operator $E_{k}^{QP}\in \mathcal{L}\left(L^2(\Omega^W),L^2_{unif}(\Omega^H)\right)$ defined by
\[
	\forall u\in L^2(\Omega^W),\;\forall q\in\Z,\;\forall (x,y)\in\Omega^W,\quad E_{k}^{QP}u(x,y+qL)=u(x,y)\,e^{\imath q {k} L},
\]
where $L^2_{unif}(\Omega^H)$ is the normed space defined by
\[
	L^2_{unif}(\Omega^H):=\left\{u\in L^2_{loc}(\Omega^H),\quad\sup_{q\in \Z}\int_{\Omega^W+q{\bf e}_2}\abs{u}^2<+\infty \right\}.
\]
A natural functional space which appears is the set of locally $L^2$, $k-$quasiperiodic functions defined in $\Omega^H$, denoted $L^2_{k}(\Omega^H)$ and characterized by
\[
	L^2_{k}(\Omega^H) = E_{k}^{QP}\left(L^2(\Omega^W)\right).
\]
We can introduce the corresponding $k$-quasiperiodic restriction operator $$R_{k}^{QP}\in \mathcal{L}\left(L^2_k(\Omega^H),L^2(\Omega^W)\right)$$ defined by 
\[
	\forall u_{k}\in 	L^2_{k}(\Omega^H),\quad R_{k}^{QP}u_{k}:=\left.u_{k}\right|_{\Omega^W}.
\]
Noting that
\[
		L^2(\Omega^W) = R_{k}^{QP}\left(L^2_{k}(\Omega^H)\right),
\]
it is easy to see that $L^2_{k}(\Omega^H)$ and $L^2(\Omega^W)$ are isomorphic. We can then consider  $L^2_{k}(\Omega^H)$ as a Hilbert space when it is endowed with the inner product of $L^2(\Omega^W)$. More precisely, we define the scalar product on $L^2_{k}(\Omega^H)$ as follows
\[
	\forall (u_{k},v_{k})\in L^2_{k}(\Omega^H)^2,\quad \left(u_{k},v_{k}\right)_{L^2_{k}(\Omega^H)} = \left(R^{QP}_{k}u_{k},R^{QP}_{k}v_{k}\right)_{L^2(\Omega^W)}.
\]
Next we define smooth quasiperiodic functions in $\Omega^H$ 
$$
\cC^\infty_{k}(\Omega^H) =
\left\{u\in \cC^\infty(\Omega^H),\; \quad u(x,y+{L})=u(x,y)e^{\imath k{L}},\; \forall (x,y) \in \Omega^H\right\}.
$$
and smooth quasiperiodic functions in $\Omega^W$
\begin{equation*}
\cC^\infty_{k}(\Omega^W) = R_{k}^{QP}\left(\cC^\infty_{k}(\Omega^H)\right)
	\end{equation*}
Let $H^1_{k}(\Omega^W)$ (resp. $H^1_{k}(\Delta,\Omega^W)$) be the closure of $\cC^\infty_{k}(\Omega^W)$ in $H^1(\Omega^W)$ (resp. in $H^1(\Delta,\Omega^W)$) equipped with the norm of $H^1(\Omega^W)$ (resp. the norm of $H^1(\Delta,\Omega^W)$) and let $H^1_{k}(\Omega^H)$ (resp. $H^1_{k}(\Delta,\Omega^H)$) be defined by
\begin{equation*}
H^1_{k}(\Omega^H) = E^{QP}_{k}\left(H^1_{k}(\Omega^W)\right)\quad\left(\text{ resp.}\; H^1_{k}(\triangle,\Omega^H) = E^{QP}_{k}\left(H^1_{k}(\Delta,\Omega^W)\right)\right),
	\end{equation*}
that we also equip with the norm of $H^1(\Omega^W)$ (resp. the norm of $H^1(\Delta,\Omega^W)$). 
\begin{remark}\label{rem:H1_ky}
		The functions of $H^1_{k}(\Omega^H)$ are nothing but the $k-$quasiperiodic extensions of functions in $H^1_{k}(\Omega^W)$. Therefore they are $H^1$ in any horizontal strip but not in the vertical ones.
\end{remark}

\noindent Let us denote (see Figure~\ref{fig:omegaw})
\[
	\Sigma_{00}^\ell := \Sigma^H\cap\overline{\Omega^W}\quad\left(\neq \Sigma^0\right).
\]
and let us define $H^{1/2}_{k}(\Sigma_{00}^\ell)$ by
\[
	H^{1/2}_{k}(\Sigma_{00}^\ell) := \left\{\left.u\right|_{\Sigma_{00}^\ell},\;u\in H^1_{k}(\Omega^W)\right\},
\]
equipped with the graph norm. We define the space $H^{1/2}_{k}(\Sigma^H)$ by $k-$quasiperiodic extension of functions of $H^{1/2}_{k}(\Sigma_{00}^\ell)$:
\begin{equation*}
H^{1/2}_{k}(\Sigma^H) = E^{QP}_{k}\left(H^{1/2}_{k}(\Sigma_{00}^\ell)\right),
	\end{equation*}
that we equip with the norm of $H^{1/2}_{k}(\Sigma_{00}^\ell)$.
\\\\
The space $H^{1/2}_{k}(\Sigma_{00}^\ell)$ is a dense subspace of $H^{1/2}(\Sigma_{00}^\ell)$ and the embedding from $H^{1/2}_{k}(\Sigma_{00}^\ell)$ onto $H^{1/2}(\Sigma_{00}^\ell)$ is continuous. We can then define the dual space of $H^{1/2}_{k}(\Sigma_{00}^\ell)$ that we call $H^{-1/2}_{k}(\Sigma_{00}^\ell)$. According to Green's formula,  we can show that
\[
	H^{-1/2}_{k}(\Sigma_{00}^\ell) = \left\{\left.-\frac{\partial u}{\partial\nu}\right|_{\Sigma_{00}^\ell},\;u\in H^{1}_{k}(\Delta,\Omega^W)\right\}.
\]
To define the space of $k$-extension of functions in $H^{-1/2}_{k}(\Sigma_{00}^\ell)$, we need to define the extension operator $E^{QP}_{k}$ on $H^{-1/2}_{k}(\Sigma_{00}^\ell)$ in a weak sense. Actually, this can be done by duality by setting for all $(\psi_0,\phi_{k})\in H^{-1/2}_{k}(\Sigma_{00}^\ell)\times H^{1/2}_{k}(\Sigma^H)$
\[
	\langle E^{QP}_{k}\psi_0,\phi_{k}\rangle_{\left(H^{1/2}_{k}(\Sigma^H)\right)^\prime, H^{1/2}_{k}(\Sigma^H)} = \langle \psi_0,R^{QP}_{k}\phi_{k}\rangle_{H^{-1/2}_{k}(\Sigma_{00}^\ell), H^{1/2}_{k}(\Sigma_{00}^\ell)}.
\]
Finally, we define 
$
	H^{-1/2}_{k}(\Sigma^H)=E^{QP}_{k}\left(H^{-1/2}_{k}(\Sigma_{00}^\ell)\right).
$\\\\
We can now state the following results.
\begin{theorem}\label{th:FB_Hs}
	$\mathcal{F}_y$ is an isomorphism from $X^H$ onto 
	\[
	X_\text{QP}:=\quad \left\{\widehat{u}\in L^2\left(-{\pi}/{L},{\pi}/{L};X^H\right)\,\mid 
		\mbox{ for a. e. } k \in (-{\pi}/{L}, {\pi}/{L} ), \, \widehat{u}(\cdot; k) \in X^H_k \,\right\},
	\]equipped with the norm
	$		\dsp\norm{\widehat{u}}_{X_\text{QP}}^2=\int_{-\pi/L}^{\pi/L}\norm{\widehat{u}(\cdot; k)}_{X^H_k}^2\,dk
	$
	where
\begin{itemize}
	\item $X^H=H^1(\Omega^H)$, $X_\text{QP}=H^1_\text{QP}\left(\Omega^H\times\left(-{\pi}/{L},{\pi}/{L}\right)\right)$ and $X^H_k = H^1_k(\Omega^H)$;
	\item $X^H=H^1(\triangle, \Omega^H)$, $X_\text{QP}=H^1_\text{QP}\left(\triangle;\Omega^H\times\left(-{\pi}/{L},{\pi}/{L}\right)\right)$ and \\$X^H_k = H^1_k(\triangle, \Omega^H)$;
	\item $X^H=H^{1/2}(\Sigma^H)$, $X_\text{QP}=H^{1/2}_\text{QP}\left(\Sigma^H\times\left(-{\pi}/{L},{\pi}/{L}\right)\right)$ and $X^H_k = H^{1/2}_k(\Sigma^H)$.
\end{itemize}
\end{theorem}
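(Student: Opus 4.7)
The plan is to bootstrap from the $L^2$ statement, which has already been established in \S\ref{ssub:floquet_bloch_transform}, using the fact that $\mathcal{F}_y$ acts trivially on the $x$-variable and commutes with $\partial_y$ on quasiperiodic functions. The three bullets are handled separately, in order of increasing technicality, but they all share the same core mechanism: write the $X^H$-norm as an $L^2$-norm of finitely many derivatives (or traces), apply the $L^2$ isomorphism component by component, and check that the resulting $k$-slice actually lies in $X^H_k$.

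First I would treat $X^H=H^1(\Omega^H)$. Because $\mathcal{F}_y$ is defined via a partial transform in $y$ only, it commutes with $\partial_x$, and a term-by-term computation on test functions in $\mathcal{C}^\infty_0(\Omega^H)$ shows that $\partial_y \mathcal{F}_y u = \mathcal{F}_y \partial_y u$. Applying the $L^2$ isometry already proved to $u$, $\partial_x u$ and $\partial_y u$ gives
\begin{equation*}
\|u\|_{H^1(\Omega^H)}^2=\int_{-\pi/L}^{\pi/L}\Bigl(\|\mathcal{F}_y u(\cdot;k)\|^2_{L^2(\Omega^W)}+\|\partial_x\mathcal{F}_y u(\cdot;k)\|^2_{L^2(\Omega^W)}+\|\partial_y\mathcal{F}_y u(\cdot;k)\|^2_{L^2(\Omega^W)}\Bigr)\,dk.
\end{equation*}
By Remark \ref{rem:H1_ky}, the right-hand side of the above display is exactly $\int_{-\pi/L}^{\pi/L}\|\mathcal{F}_y u(\cdot;k)\|_{H^1_k(\Omega^H)}^2\,dk$. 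The quasiperiodicity $\mathcal{F}_y u(x,y+L;k)=e^{ikL}\mathcal{F}_y u(x,y;k)$ is immediate from the defining series, so $\mathcal{F}_y u(\cdot;k)\in H^1_k(\Omega^H)$ for a.e.\ $k$. Surjectivity and injectivity then come from inverting the $L^2$ map together with the fact that the Floquet synthesis of an element of $H^1_\text{QP}$ produces a function whose $L^2$-norms of $\partial_x$- and $\partial_y$-derivatives are finite, hence in $H^1(\Omega^H)$.

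Next I would treat $X^H=H^1(\Delta,\Omega^H)$ by the same argument, adding the identity $\Delta\mathcal{F}_y u = \mathcal{F}_y \Delta u$ (again, $\mathcal{F}_y$ commutes with differential operators by its very definition) so that an extra $L^2$-isometry on $\Delta u$ upgrades the previous norm equivalence to the graph norm of $\Delta$. This gives the middle bullet with essentially no new idea.

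The main obstacle is the trace bullet $X^H=H^{1/2}(\Sigma^H)$, because $H^{1/2}$ is not defined by a local differential expression and the spaces $H^{1/2}_k(\Sigma^H)$ have been introduced indirectly through $k$-quasiperiodic extension of traces of $H^1_k(\Omega^W)$. The strategy I would adopt is to go through a lifting. Given $\phi\in H^{1/2}(\Sigma^H)$, choose a continuous lifting $u\in H^1(\Omega^H)$ with $\gamma_0 u=\phi$, apply the isomorphism for $H^1(\Omega^H)$ to obtain $\mathcal{F}_y u(\cdot;k)\in H^1_k(\Omega^H)$ for a.e.\ $k$, and then take the trace on $\Sigma^H$: commutation of $\mathcal{F}_y$ with the Dirichlet trace (which reduces to the same fact in $L^2$ on a smooth lifting, extended by density) yields $\mathcal{F}_y \phi(\cdot;k)=\gamma_0[\mathcal{F}_y u(\cdot;k)]\in H^{1/2}_k(\Sigma^H)$. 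Conversely, to any $\widehat\phi\in X_\text{QP}$ one associates $k$-quasiperiodic liftings $\widehat u(\cdot;k)\in H^1_k(\Omega^H)$ measurably in $k$ with controlled norms (this requires either a measurable selection argument or, more concretely, the explicit harmonic-type lifting provided by an $x$-exponential kernel acting slicewise), and then Floquet-synthesizes them into a function of $H^1(\Omega^H)$ whose trace is the desired preimage. Continuity in both directions, together with the norm identity derived in the $H^1$ case restricted to $\Sigma^H$ via the trace inequality and the open mapping theorem, finishes the proof. The delicate point to verify carefully here is the measurability and norm control of the slicewise liftings, which is why I would prefer the explicit lifting (for instance via the harmonic extension with vanishing behavior as $x\to+\infty$, whose $k$-slices are themselves quasiperiodic liftings) rather than an abstract selection argument.
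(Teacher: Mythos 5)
The paper states Theorem~\ref{th:FB_Hs} without proof (it is presented as a recalled property of the Floquet--Bloch transform, with the reader referred to \cite{TheseSonia,FlissAPNUM} for details), so there is no internal proof to compare against; I can only assess your argument on its own terms. Your overall strategy is the standard one and is correct in outline: the first two bullets follow from the $L^2$ isometry applied componentwise to $u$, $\nabla u$ and $\Delta u$, using that $\mathcal{F}_y$ commutes with differential operators, and the third bullet is reduced to the first by lifting and taking traces. One small point of care in the first bullet: membership of $\mathcal{F}_y u(\cdot;k)$ in $H^1_k(\Omega^H)$ is not only the quasiperiodicity relation but membership in the \emph{closure} of $\cC^\infty_k(\Omega^W)$ in $H^1(\Omega^W)$, as that is how the paper defines $H^1_k$; this follows by approximating $u$ by smooth compactly supported functions and using continuity of $\mathcal{F}_y$, and should be said.

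The only genuinely delicate step is the one you flag yourself: surjectivity in the $H^{1/2}(\Sigma^H)$ case requires a family of liftings $\widehat{u}(\cdot;k)\in H^1_k(\Omega^H)$ of $\widehat{\phi}(\cdot;k)$ that is measurable in $k$ with norms bounded by $C\,\lVert\widehat{\phi}(\cdot;k)\rVert_{H^{1/2}_k}$ uniformly in $k$. Your proposed concrete fix --- a harmonic-type lifting via an ``$x$-exponential kernel'' acting slicewise --- does not apply directly here, because $\Sigma^H$ is not a straight line but the broken (zigzag) boundary formed by the left edges $\Sigma^{\ell}_{0q}$ of the cells, so there is no separated-variables Poisson-type kernel adapted to it. The standard repair is to take as lifting the solution operator of an auxiliary coercive problem posed in the single cell strip $\Omega^W$ (for instance $-\Delta w+w=0$ in $\Omega^W$ with Dirichlet datum $R^{QP}_k\widehat{\phi}(\cdot;k)$ on $\Sigma^{\ell}_{00}$, homogeneous Dirichlet data on $\Sigma^{\ell}_{10}$ and $k$-quasiperiodic conditions on the horizontal boundaries), followed by $E^{QP}_k$; this operator is bounded from $H^{1/2}_k(\Sigma^{\ell}_{00})$ to $H^1_k(\Omega^W)$ uniformly in $k$ and depends continuously (hence measurably) on $k$ through the quasiperiodicity constraint, which supplies exactly the measurable, norm-controlled selection you need. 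With that substitution, and the open-mapping/trace-inequality argument you describe for the norm equivalence, your proof is complete.
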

\noindent Finally, we can extend by duality the definition of $\mathcal{F}_y$ to the space $H^{-1/2}({\Sigma}^H)$  introducing the dual of $H^{1/2}_\text{QP}\left(\Sigma^H\times(-{\pi}/{L},{\pi}/{L})\right)$
\[ \left|
	\begin{array}{l}
H^{-1/2}_\text{QP}\left(\Sigma^H\times(-{\pi}/{L},{\pi}/{L})\right):= \\[7pt]
\quad \left\{\widehat{\psi}\in L^2\left(-{\pi}/{L},{\pi}/{L};\,X\right)\,\mid
	\forall k \in (-{\pi}/{L},{\pi}/
{L} ), \, \widehat{\psi}(\cdot; k) \in H^{-1/2}_{k}(\Sigma^H) \,\right\},
\end{array} \right.
\]
where $
	X = E^{QP}_0\left(\left(H^{1/2}_{00}(\Sigma^{\ell}_{00})\right)^\prime\right).
$
\\\\
According to Theorem \ref{th:FB_Hs}, the partial FB transform $\mathcal{F}_y$  defines an isomorphism from $H^{1/2}({\Sigma}^H)$ onto $H^{1/2}_\text{QP}\left(\Sigma^H\times\left(-\pi/L,\pi/L\right)\right)$. Using the Riesz representation theorem, the FB transform can then be extended by duality as an isomorphism from $H^{-1/2}({\Sigma}^H)$ onto $H^{-1/2}_\text{QP}\left(\Sigma^H\times\left(-\pi/L,\pi/L\right)\right)$ see \cite{FlissAPNUM} for more details. 
\subsubsection{Application to the half-space problem} 
\label{ssub:application_to_the_half-space_problem}
The above results imply that for almost every $k$ in $\left(-{\pi}/{L},{\pi}/{L}\right)$, we have
\[
\begin{array}{|l}
	\forall \phi\in H^{1/2}(\Sigma^H),\;\widehat{\phi}_{k}:=\cF_y \phi (\cdot;k)\in H^{1/2}_{k}(\Sigma^H)\\[5pt]
	\forall \psi\in H^{-1/2}(\Sigma^H),\; \widehat{\psi}_{k}:=\cF_y \psi (\cdot;k)\in H^{-1/2}_{k}(\Sigma^H)\\[5pt]
	\forall u^H \in H^1(\Delta,\Omega^H),\; \widehat{u}^H_{k} := \cF_y u^H (\cdot;k)\in H^1_{k}(\Delta,\Omega^H)
\end{array}
\]
The following theorem is a direct consequence of the properties of the FB transform given in the previous section.
\begin{theorem}\label{th:FBT_uH}
Let $u^H(\phi)$ be the solution of problem $(\mathcal{P}^H)$ (see equation \eqref{PbDemiespsec3}). For every $k\in (-{\pi}/{L},{\pi}/{L})$, $\widehat{u}^H_{k}\big(\widehat{\phi}_{k}\big):=\mathcal{F}_y\left(u^H(\phi)\right)(\cdot;k)$ is the unique solution $H^1_{k}(\Delta,\Omega^H)$ of the half-space problem with the $k$-quasiperiodic boundary condition
$
	\widehat{\phi}_{k} = \cF_y\phi(\cdot;k)
$.
\end{theorem}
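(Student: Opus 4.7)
The plan is to apply the partial Floquet-Bloch transform $\mathcal{F}_y$ directly to the two equations of $(\mathcal{P}^H)$ and to combine the commutation properties of $\mathcal{F}_y$ stated in Section \ref{ssub:floquet_bloch_transform} with a dissipation argument for uniqueness.

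First, I would verify that $\widehat{u}^H_k := \mathcal{F}_y(u^H(\phi))(\cdot;k)$ solves the announced $k$-quasiperiodic problem. Since $\text{Supp}(\rho_0)\subset \Omega^i$ is disjoint from $\Omega^H$, on $\Omega^H$ we have $\rho = \rho_{\text{per}}$, which is $L$-periodic in the $y$-direction (because ${\bf e}_2=(0,L)$ with $L=\sqrt{3}d$). The transform $\mathcal{F}_y$ commutes with multiplication by such an $L$-periodic function and with $\Delta = \partial_x^2 + \partial_y^2$ (the differentiation in $y$ acts only on the exponential factors in a controlled way). Moreover, since $\Sigma^H$ is parallel to the $y$-axis, the trace on $\Sigma^H$ commutes with $\mathcal{F}_y$. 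Applying $\mathcal{F}_y$ to $(\mathcal{P}^H)$ thus yields, for almost every $k\in(-\pi/L,\pi/L)$,
\[
\Delta \widehat{u}^H_k + \rho\, \widehat{u}^H_k = 0 \text{ in }\Omega^H,\qquad \widehat{u}^H_k = \widehat{\phi}_k \text{ on }\Sigma^H.
\]
The membership $\widehat{u}^H_k \in H^1_k(\Delta,\Omega^H)$ and $\widehat{\phi}_k \in H^{1/2}_k(\Sigma^H)$ follows from Theorem \ref{th:FB_Hs}.

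For uniqueness, I would take $v\in H^1_k(\Delta,\Omega^H)$ solving the homogeneous problem (zero Dirichlet data on $\Sigma^H$), multiply by $\overline{v}$ and integrate by parts over the periodicity cell $\Omega^W$. The boundary of $\Omega^W$ decomposes into $\Sigma_{00}^\ell$ (where $v=0$), the two horizontal segments $y=\pm L/2$ (whose contributions cancel exactly because both $v$ and $\partial_y v$ are $k$-quasiperiodic in $y$, so the integrands at $y=L/2$ and $y=-L/2$ differ only by the factor $e^{ikL}\overline{e^{ikL}}=1$ and come with opposite outward normals), and the ``boundary at infinity'' $x\to+\infty$, whose contribution vanishes because $v\in H^1(\Omega^W)$ allows one to extract a sequence $x_n\to\infty$ along which the boundary integral tends to $0$. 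One is left with
\[
\int_{\Omega^W}|\nabla v|^2 \,d\mathbf{x} = \int_{\Omega^W} \rho\,|v|^2 \,d\mathbf{x}.
\]
Taking the imaginary part and invoking assumption {\bf (A2)} gives $\rho_b \int_{\Omega^W}|v|^2 \leq \big|\int_{\Omega^W}({\rm Im}\,\rho)|v|^2\big|=0$, hence $v=0$ on $\Omega^W$ and, by $k$-quasiperiodic extension, on all of $\Omega^H$.

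The main obstacle I anticipate is the rigorous justification of the integration by parts on the unbounded strip $\Omega^W$: both the vanishing of the ``flux at infinity'' for functions in $H^1(\Delta,\Omega^W)$ and the precise cancellation of the contributions on the two horizontal cuts require a careful use of the quasiperiodic trace spaces $H^{\pm 1/2}_k(\Sigma_{00}^\ell)$ introduced in Section \ref{ssub:floquet_bloch_transform}. Everything else (commutation of $\mathcal{F}_y$ with $\Delta$, with the trace on $\Sigma^H$, and with multiplication by $\rho_{\text{per}}$) is a direct consequence of the properties listed before the statement.
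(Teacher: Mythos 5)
Your argument is correct and follows exactly the route the paper intends: the paper gives no written proof for this statement, asserting only that it is ``a direct consequence of the properties of the FB transform given in the previous section,'' which is precisely the commutation-with-$\Delta$, with multiplication by the $L$-periodic $\rho_{\mathrm{per}}$, and with the trace on the $L$-periodic boundary $\Sigma^H$ that you invoke. Your added uniqueness argument (Green's formula on $\Omega^W$, cancellation of the quasiperiodic contributions on $y=\pm L/2$, vanishing flux at infinity, and the dissipation assumption {\bf (A2)}) is the standard justification the authors implicitly rely on, so there is nothing to flag beyond the technical care you already acknowledge.
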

\noindent Using the inversion formula  \eqref{eq:FBext_inv}, we can recover $u^H(\phi)$ in the whole domain $\Omega^H$ for any Dirichlet condition $\phi\in H^{1/2}({\Sigma}^H)$:
\begin{equation}\label{eq:extenduH}
	u^H(\phi) = \sqrt{\frac{L}{2\pi}}\int_{-{\pi}/{L}}^{{\pi}/{L}}\widehat{u}^H_{k}\big(\widehat{\phi}_{k}\big)\;dk.
	\end{equation}
Then we can show the following theorem which expresses that the half-space DtN operator $\Lambda^H$ can be described in terms of a family of ``quasiperiodic'' half-space DtN operators.
\begin{theorem}\label{th:FBT_LambdaH}
The half-space DtN operator $\Lambda^H$ is given by:
\begin{equation}\label{eq:lambdaH}
	\forall\phi\in H^{1/2}(\Sigma^H),\quad
\Lambda^H\phi = \sqrt{\frac{L}{2\pi}}\int_{-{\pi}/{L}}^{{\pi}/{L}}\widehat{\Lambda}_k^H\,\widehat{\phi}_{k}\;dk,
\end{equation}
where $\widehat{\Lambda}_k^H$ is $k-$quasiperiodic half-space DtN operator, defined by
\begin{equation}\label{eq:lambdaH_chapeau}
	\widehat{\Lambda}_k^H\,\widehat{\phi}_{k}=\left.\frac{\partial \widehat{u}^H_{k}\big(\widehat{\phi}_{k}\big)}{\partial \nu^H}\right|_{\Sigma^H}
\end{equation}
\end{theorem}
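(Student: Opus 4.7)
The proof proceeds by applying the partial Floquet-Bloch transform $\mathcal{F}_y$ to both sides of the defining relation $\Lambda^H \phi = \partial u^H(\phi)/\partial \nu^H|_{\Sigma^H}$ and then inverting via the Plancherel-type formula \eqref{eq:FBext_inv}. Concretely, I would first fix $\phi \in H^{1/2}(\Sigma^H)$, so that $u^H(\phi) \in H^1(\Delta,\Omega^H)$ and $\Lambda^H\phi \in H^{-1/2}(\Sigma^H)$. By Theorem \ref{th:FB_Hs} (applied to $H^1(\Delta,\Omega^H)$, $H^{1/2}(\Sigma^H)$ and its dual extension), each of $\phi$, $u^H(\phi)$ and $\Lambda^H\phi$ can be identified, via $\mathcal{F}_y$, with its fiberwise $k$-quasiperiodic representative for a.e.\ $k \in (-\pi/L,\pi/L)$. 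It thus suffices to identify the fiber $\mathcal{F}_y(\Lambda^H\phi)(\cdot;k)$ with $\widehat{\Lambda}^H_k \widehat{\phi}_k$, and then re-synthesize by the inversion formula.

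Next, I would use the two structural properties of $\mathcal{F}_y$ recalled in \S\ref{ssub:floquet_bloch_transform}: it commutes with any differential operator and with multiplication by $L$-periodic functions. Since $\nu^H$ is constant along $\Sigma^H$ (it is a vertical line), the outer normal derivative reduces to a first-order differential operator with constant coefficients, so
\begin{equation*}
\mathcal{F}_y\!\left(\tfrac{\partial u^H(\phi)}{\partial\nu^H}\right)\!(\cdot;k)
= \tfrac{\partial}{\partial\nu^H}\,\mathcal{F}_y(u^H(\phi))(\cdot;k)
= \tfrac{\partial}{\partial\nu^H}\,\widehat{u}^H_k(\widehat{\phi}_k),
\end{equation*}
where the last equality uses Theorem \ref{th:FBT_uH}. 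Taking the trace on $\Sigma^H$ (which commutes with $\mathcal{F}_y$ on the relevant $k$-quasiperiodic spaces, by the very definition of $H^{-1/2}_k(\Sigma^H)$ via duality against $H^{1/2}_k(\Sigma^H)$ given in \S\ref{ssub:floquet_bloch_transform}) yields
\begin{equation*}
\mathcal{F}_y(\Lambda^H\phi)(\cdot;k)
= \left.\tfrac{\partial \widehat{u}^H_k(\widehat{\phi}_k)}{\partial\nu^H}\right|_{\Sigma^H}
= \widehat{\Lambda}^H_k\,\widehat{\phi}_k.
\end{equation*}

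Finally, I would apply the inversion formula for $\mathcal{F}_y$ on $H^{-1/2}(\Sigma^H)$, which by the extension-by-duality construction from \eqref{eq:FBext_inv} reads
\begin{equation*}
\Lambda^H\phi
= \sqrt{\tfrac{L}{2\pi}}\int_{-\pi/L}^{\pi/L} \mathcal{F}_y(\Lambda^H\phi)(\cdot;k)\,dk
= \sqrt{\tfrac{L}{2\pi}}\int_{-\pi/L}^{\pi/L} \widehat{\Lambda}^H_k\,\widehat{\phi}_k\,dk,
\end{equation*}
which is precisely \eqref{eq:lambdaH}. To close the argument cleanly I would also note that the well-posedness of the fiber problem (Theorem \ref{th:FBT_uH}) guarantees that $\widehat{\Lambda}^H_k \in \mathcal{L}(H^{1/2}_k(\Sigma^H),H^{-1/2}_k(\Sigma^H))$ is well defined for a.e.\ $k$, so the integrand in \eqref{eq:lambdaH} is measurable and the right-hand side belongs to $H^{-1/2}_\text{QP}(\Sigma^H\times(-\pi/L,\pi/L))$ after pre-composition with $\mathcal{F}_y$.

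The only delicate point is the commutation of $\mathcal{F}_y$ with the normal-trace operator at the level of $H^{-1/2}$, since $\partial/\partial\nu^H$ and the trace are both defined there by duality. The natural way around this is to verify the identity first for smooth, compactly supported $\phi$, where all operations are classical and the commutation with $\mathcal{F}_y$ is obvious from its definition on $\mathcal{C}^\infty_0(\mathbb{R})$, and then to extend by continuity using the isomorphism property of Theorem \ref{th:FB_Hs}. This density/duality step is where the bulk of the technical care is needed; the rest is a direct transcription of the fact that $\mathcal{F}_y$ diagonalizes translation-invariant operations in the $y$-direction, which is exactly the situation here since $\rho_{\text{per}}$ is $L$-periodic along $\mathbf{e}_2$ and $\mathrm{supp}(\rho_0)\subset \Omega^i$ does not touch $\Omega^H$.
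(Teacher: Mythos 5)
Your proof is correct and follows essentially the route the paper intends: the paper states this theorem without a written proof, presenting it as a direct consequence of the FB-transform properties of \S\ref{ssub:floquet_bloch_transform}, Theorem \ref{th:FBT_uH} and the inversion formula \eqref{eq:FBext_inv}, which is exactly the argument you spell out (including the density/duality care needed at the $H^{-1/2}$ level). One small correction: $\Sigma^H$ is a periodic broken line following the hexagonal cells, not a straight vertical line, so $\nu^H$ is not constant but only $L$-periodic along $\Sigma^H$; the commutation of $\mathcal{F}_y$ with $\nu^H\cdot\nabla$ then follows from the two properties you already invoke (commutation with differential operators and with multiplication by $L$-periodic functions), so the conclusion is unaffected.
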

\noindent According to relations \eqref{eq:extenduH} (resp. \eqref{eq:lambdaH}), the solution of the half-space problem (resp. the half-space DtN operator) for arbitrary boundary data $\phi$ is obtained by superposing the corresponding solutions (resp. DtN operator) for quasiperiodic data. The next subsection is devoted to solving such problems.

\subsection{Solution of the half-space problem for quasiperiodic boundary data}\label{sub:PH_kQPphi}
\noindent Let $k$ be in $(-\pi/L, \pi/L)$, we explain here how to compute the solution of $(\mathcal{P}^H)$ (see \eqref{PbDemiespsec3}) for $k-$quasiperiodic boundary data $\phi: = \widehat{\phi}_k\in H^{1/2}_k(\Sigma^H)$. We have seen in the previous sections (see in particular Theorems \ref{th:FBT_uH} and \ref{th:FBT_LambdaH}) that for any $\widehat{\phi}_k\in H^{1/2}_k(\Sigma^H)$, \eqref{PbDemiespsec3} admits a unique solution $\widehat{u}^H_k(\widehat{\phi}_k)\in H^1_k(\Delta,\Omega^H)$ and $\widehat{\Lambda}^H_k\widehat{\phi}_k\in H^{-1/2}_k(\Sigma^H)$.
\\\\
\noindent This half-space problem is in some sense the counterpart of the waveguide problem with $k-$quasiperiodic conditions used in \cite{TheseSonia,FlissAPNUM} to determine the half-space DtN operator for the case of a square periodicity cell. In \cite{TheseSonia,FlissAPNUM}, the basic tools are the resolution of elementary cell problems and a stationary Riccati equation whose solution is a so-called propagation operator. 
\\\\
Let us begin with some notation. Let $\mathcal{C}_{00}$ be a periodicity cell whose boundary meets the vertical boundary $\Sigma^H$. Given $p\in\mathbb{N},\,q\in\mathbb{Z}$, we introduce the vector ${\mathbf V}_{pq}=p{\bf e}_1+q{\bf e}_2$ (See Assumption (A1) in Section \ref{sec:intro} and Figure~\ref{fig:geometry} for the definition of the directions of periodicity ${\bf e}_1$ and ${\bf e}_2$). The cell $\mathcal{C}_{pq}$ of the periodic half-space can then be defined by translation of the reference cell (see Figure~\ref{phiperiode})
\[
	\forall p\in\mathbb{N},\,q\in\mathbb{Z},\quad \mathcal{C}_{pq} = \mathcal{C}_{00}+{\mathbf V}_{pq}.
\]
We will denote by $\Omega_p$ the vertical ``strip'' containing the cell $\mathcal{C}_{p0}$:
$$\Omega_p=\bigcup_{q\in\mathbb{Z}}\, \mathcal{C}_{pq}.$$
In the following, for a cell of periodicity $\mathcal{C}_{pq}$, we introduce the oriented boundaries described in Figure~\ref{notationscell}. 

\begin{figure}[ht]
  \centering
\psfrag{a}[c][][1.0]{$\mathcal{C}_{pq}$}
\psfrag{b}[c][][1.0]{$\mathcal{C}_{pq}$}
\psfrag{h}[c][][1.0]{$\color{mred}\Sigma_{pq}^{\ell}$}
\psfrag{j}[c][][1.0]{$\color{mred}\Gamma_{pq}^{+}$}
\psfrag{k}[c][][1.0]{$\color{mblue}\Gamma_{pq}^{-}$}
\includegraphics[width=.6\textwidth]{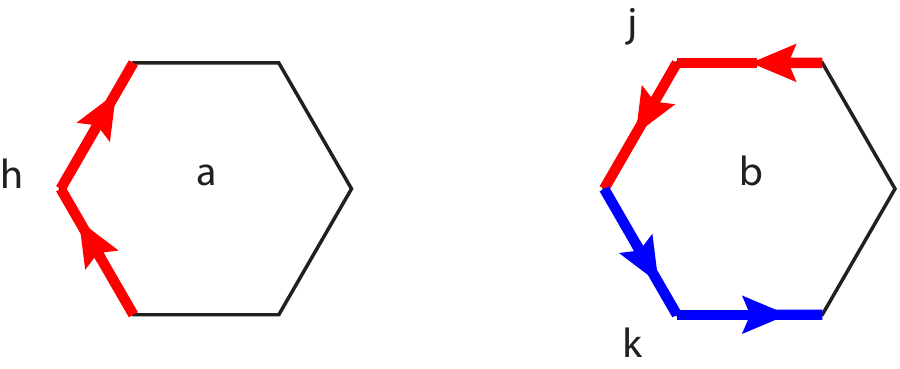}
\caption{Notations used for a periodicity cell}
\label{notationscell}
\end{figure}

\noindent Finally, using the $k$-quasiperiodic restriction operator $R^{QP}_k$ and the $k$-quasiperiodic extension operator $E^{QP}_k$ defined in Section \ref{ssub:floquet_bloch_transform}, we recall that $H^{1/2}_k(\Sigma^H)$ is isomorphic to $H^{1/2}_k(\Sigma_{00}^{\ell})$. 
\subsubsection{The propagation operator} 
\label{ssub:the_propagation_operator}
\noindent We can now introduce the operator $\cP_k$ defined by
\[ 
	\forall \phi^0_k\in H^{1/2}_k(\Sigma_{00}^\ell),\qquad \cP_k\,\phi^0_k =\left. \widehat{u}^H_k(E^{QP}_k\,\phi^0_k)\right|_{\Sigma_{10}^{\ell}}
\] 
where $\widehat{u}^H_k(E^{QP}_k\,\phi^0_k)$ is the unique solution of $(\mathcal{P}^H)$ (see \eqref{PbDemiespsec3}) with boundary condition the $k-$quasiperiodic extension of $\phi^0_k$: $E^{QP}_k\,\phi^0_k$.

\begin{figure}[ht]
\begin{center}
	\psfrag{a}[c][][1.0]{$\mathcal{C}_{00}$}
	\psfrag{c}[c][][1.0]{$\mathcal{C}_{10}$}
	\psfrag{b}[c][][1.0]{$\color{mblue}\phi^0_k$}
	\psfrag{d}[c][][1.0]{$\color{mblue}\cP_k\,\phi^0_k$}
	\includegraphics[width=.3\textwidth]{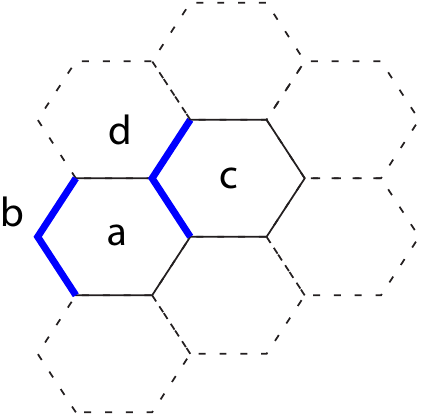}
\end{center}
\caption{Description of the propagation operator}
\label{opP}
\end{figure}

\begin{remark}\label{rem:ident_spaces}
For the sake of simplicity, we will often identify throughout the paper functional spaces of functions acting on $\Sigma_{pq}^{\ell}$, with the same spaces acting on $\Sigma_{00}^{\ell}$. Typically, $H^{1/2}_k(\Sigma_{pq}^{\ell})$ for arbitrary $(p,q)\in\mathbb{N}\times \mathbb{Z}$ will be identified with $H^{1/2}_k(\Sigma_{00}^{\ell})$.
\end{remark}

\noindent The operator $\cP_k$, considered now as a bounded linear operator from $H^{1/2}_k(\Sigma_{00}^{\ell})$ onto itself (see Remark \ref{rem:ident_spaces}), is called a ``propagation operator''. Indeed, the solution can be reconstructed in any cell of the medium from its values in the reference cell $\mathcal{C}_{00}$ and the knowledge of $\cP_k$, as shown in the next result.
\begin{theorem} \label{Thu}
\label{reconsu}
For any $k-$quasiperiodic condition $\widehat{\phi}_k \in H^{1/2}_k(\Sigma^H)$, the solution $\widehat{u}^H_k(\widehat{\phi}_k)$ of \eqref{PbDemiespsec3} is given by
\begin{equation}
\label{uCnm}
\forall p\in\N,~q\in \Z, \quad\left. \widehat{u}^H_k(\widehat{\phi}_k)\right|_{\mathcal{C}_{pq}}=\left.e^{\imath qkL}\widehat{u}^H_k(E^{QP}_k\,\cP_k^p\,R^{QP}_k\widehat{\phi}_k)\right|_{\mathcal{C}_{00}}.
\end{equation}
\end{theorem}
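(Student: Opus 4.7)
The plan is to split the claimed formula into two independent contributions: the phase factor $e^{\imath qkL}$, which reflects $k$-quasiperiodicity in the $y$-direction, and the iterated propagation operator $\cP_k^p$, which accounts for horizontal translations by ${\bf e}_1$. Since $\widehat{u}^H_k(\widehat{\phi}_k)\in H^1_k(\Delta,\Omega^H)$ is, by construction, the $k$-quasiperiodic extension of its restriction to $\Omega^W$ (cf. Remark~\ref{rem:H1_ky}), one immediately has $\widehat{u}^H_k(\widehat{\phi}_k)({\bf x}+q{\bf e}_2)=e^{\imath qkL}\,\widehat{u}^H_k(\widehat{\phi}_k)({\bf x})$ for every $q\in\Z$ and every ${\bf x}\in\Omega^H$. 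Because $\mathcal{C}_{pq}=\mathcal{C}_{p0}+q{\bf e}_2$, this reduces the proof to the case $q=0$.

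The remaining identity, namely $\widehat{u}^H_k(\widehat{\phi}_k)|_{\mathcal{C}_{p0}}=\widehat{u}^H_k(E^{QP}_k\,\cP_k^p\,R^{QP}_k\widehat{\phi}_k)|_{\mathcal{C}_{00}}$ (understood under the translation identification of Remark~\ref{rem:ident_spaces}), I would establish by induction on $p\geq 0$. The base case $p=0$ is trivial, since $\widehat{\phi}_k\in H^{1/2}_k(\Sigma^H)$ gives $E^{QP}_k\,R^{QP}_k\widehat{\phi}_k=\widehat{\phi}_k$. For the inductive step, set $u_p:=\widehat{u}^H_k(E^{QP}_k\,\cP_k^p\,R^{QP}_k\widehat{\phi}_k)$ and introduce the shifted function $w_p({\bf x}):=u_p({\bf x}+{\bf e}_1)$, which is well defined on $\Omega^H$ since $\Omega^H+{\bf e}_1\subset\Omega^H$. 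Because $\rho=\rho_{\text{per}}$ throughout $\Omega^H$ (by (A1), $\text{Supp}(\rho_0)\subset\Omega^i$ is disjoint from $\Omega^H$) and $\rho_{\text{per}}$ is ${\bf e}_1$-periodic, $w_p$ solves the Helmholtz equation on $\Omega^H$.

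The heart of the argument is then the identification of the Dirichlet trace $w_p|_{\Sigma^H}$. By construction this trace is the translate by $-{\bf e}_1$ of $u_p|_{\widetilde{\Sigma}^H}$, where $\widetilde{\Sigma}^H:=\Sigma^H+{\bf e}_1=\bigcup_{q\in\Z}\Sigma^\ell_{1q}$. The $k$-quasiperiodicity of $u_p$ in the $y$-direction ensures that $u_p|_{\widetilde{\Sigma}^H}$ is entirely determined by $u_p|_{\Sigma^\ell_{10}}$, and by the very definition of the propagation operator this restriction equals $\cP_k(R^{QP}_k\,E^{QP}_k\,\cP_k^p\,R^{QP}_k\widehat{\phi}_k)=\cP_k^{p+1}R^{QP}_k\widehat{\phi}_k$. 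Hence, modulo the translation identification, $w_p|_{\Sigma^H}=E^{QP}_k\,\cP_k^{p+1}\,R^{QP}_k\widehat{\phi}_k$, and the uniqueness of the $k$-quasiperiodic half-space problem (Theorem~\ref{th:FBT_uH}) forces $w_p=u_{p+1}$. Restricting this identity to $\mathcal{C}_{00}$ and using $\mathcal{C}_{p+1,0}=\mathcal{C}_{p0}+{\bf e}_1$ closes the induction and yields the case $q=0$, which combined with the first paragraph gives the general formula.

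The main difficulty I anticipate is not analytic but notational: one must consistently track how the various translation identifications (Remark~\ref{rem:ident_spaces}) interact with the extension and restriction operators $E^{QP}_k$ and $R^{QP}_k$. In particular, verifying rigorously that $w_p|_{\Sigma^H}$, pulled back to $\Sigma^H$ by $-{\bf e}_1$, genuinely belongs to $H^{1/2}_k(\Sigma^H)$ and coincides with $E^{QP}_k\,\cP_k^{p+1}\,R^{QP}_k\widehat{\phi}_k$ requires unwinding the definitions cell by cell, exploiting the $k$-quasiperiodicity of $\widehat{u}^H_k$ across each $\Sigma^\ell_{1q}$. Once this bookkeeping is settled, uniqueness of the half-space problem delivers the inductive step in one line.
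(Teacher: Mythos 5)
Your proof is correct and follows essentially the same route as the paper's: vertical quasiperiodicity gives the factor $e^{\imath qkL}$, and an induction on $p$ using the ${\bf e}_1$-translation invariance of the medium together with uniqueness of the half-space problem gives the horizontal relation. The paper's own proof is just a terser version of this (it asserts the $p=1$ case "due to the periodicity and the well-posedness" and then inducts), so your explicit identification of the shifted trace via the definition of $\cP_k$ simply fills in the details the paper leaves implicit.
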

\begin{proof}
First of all, it is clear that the periodicity and the well-posedness of the problem in the vertical direction implies that for any $q \in \Z$, we have
\begin{equation}
\label{uC0q}
\left.\widehat{u}^H_k(\widehat{\phi}_k)\right|_{\mathcal{C}_{0q}}=e^{\imath qkL} \left.\widehat{u}^H_k(\widehat{\phi}_k)\right|_{\mathcal{C}_{00}}.
\end{equation}
In the horizontal direction, due to the periodicity and the well-posedness of problem \eqref{PbDemiespsec3}, we note that the value of $u$ on the cell $\mathcal{C}_{10}$ is related to the one on the cell $\mathcal{C}_{00}$ as follows
\begin{equation}
\label{uC10}
\left.\widehat{u}^H_k(\widehat{\phi}_k)\right|_{\mathcal{C}_{10}}=\left.\widehat{u}^H_k(E^{QP}_k\,\cP_k\,R^{QP}_k\,\widehat{\phi}_k)\right|_{ \mathcal{C}_{00}}.
\end{equation}
By induction, one easily gets that for all $p \in \N$ there holds
\begin{equation}
\label{uCn0}
\left.\widehat{u}^H_k(\widehat{\phi}_k)\right|_{\mathcal{C}_{p0}}=\left.\widehat{u}^H_k(E^{QP}_k\,\cP_k^p\,R^{QP}_k\,\widehat{\phi}_k)\right|_{\mathcal{C}_{00}}.
\end{equation}
Combining relations \eqref{uC0q} and \eqref{uCn0} yields the claimed result \eqref{uCnm}.
\end{proof}

\noindent The next result collects some useful properties of the propagation operator $\cP_k$.
\begin{corollary}\label{cor:prop_Pk}
	The operator $\cP_k\in\mathcal{L}(H^{1/2}_k(\Sigma_{00}^{\ell}) )$ is a compact operator with spectral radius strictly less than one.
\end{corollary}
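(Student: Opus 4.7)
My plan is to establish the two properties separately: compactness will follow from interior elliptic regularity, and the spectral radius bound from showing that $\cP_k^p\phi \to 0$ strongly in combination with Riesz--Schauder theory for compact operators.

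For the compactness, I would exploit that by construction $\cP_k \phi^0_k$ is the trace on $\Sigma^\ell_{10}$ of $\widehat{u}^H_k(E^{QP}_k\phi^0_k)$, and that the segment $\Sigma^\ell_{10}$ lies in the interior of $\Omega^H$, at positive distance from the Dirichlet boundary $\Sigma^H=\Sigma^\ell_{00}$ (at distance $3d/2$ in the $x$-direction). Since $\widehat{u}^H_k(E^{QP}_k\phi^0_k)$ solves the homogeneous equation $\Delta u + \rho u = 0$ in $\Omega^H$ with $\rho \in L^\infty$, interior elliptic regularity yields $\widehat{u}^H_k(E^{QP}_k\phi^0_k) \in H^2_{\mathrm{loc}}$ on a neighborhood of $\Sigma^\ell_{10}$, together with a continuity estimate in terms of $\|\phi^0_k\|_{H^{1/2}_k(\Sigma^\ell_{00})}$ supplied by the well-posedness of $(\mathcal{P}^H)$ in $H^1_k(\Delta,\Omega^H)$ (Theorem \ref{th:FBT_uH}). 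Taking the trace and identifying $\Sigma^\ell_{10}$ with $\Sigma^\ell_{00}$ according to Remark \ref{rem:ident_spaces}, one obtains that $\cP_k$ sends $H^{1/2}_k(\Sigma^\ell_{00})$ continuously into $H^{3/2}_k(\Sigma^\ell_{00})$; since $\Sigma^\ell_{00}$ has finite length $L$, the compact Sobolev embedding $H^{3/2} \hookrightarrow H^{1/2}$ concludes this step.

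For the spectral radius, I would first prove that $\cP_k^p\phi \to 0$ strongly. Applying Theorem \ref{reconsu} and using that the factor $e^{\imath qkL}$ has modulus one gives the cell-by-cell identity
\[
\|\widehat{u}^H_k(E^{QP}_k\cP_k^p R^{QP}_k\widehat{\phi}_k)\|^2_{H^1(\mathcal{C}_{00})} = \|\widehat{u}^H_k(\widehat{\phi}_k)\|^2_{H^1(\mathcal{C}_{p,0})}
\]
for every $p \in \N$ and every $\widehat{\phi}_k \in H^{1/2}_k(\Sigma^H)$. By $k$-quasiperiodicity in the vertical direction, the right-hand side is invariant under $q$-translation, so after selecting, for each $p$, a single cell $\mathcal{C}_{p,q_p}$ contained in the strip $\Omega^W$, summation yields
\[
\sum_{p=0}^{+\infty}\|\widehat{u}^H_k(E^{QP}_k\cP_k^p R^{QP}_k\widehat{\phi}_k)\|^2_{H^1(\mathcal{C}_{00})} \leq \|\widehat{u}^H_k(\widehat{\phi}_k)\|^2_{H^1(\Omega^W)} \leq C\,\|\widehat{\phi}_k\|^2_{H^{1/2}_k(\Sigma^H)}.
\]
Composing with the continuity of the trace on $\mathcal{C}_{00}$ then gives $\sum_{p \geq 1}\|\cP_k^p\phi^0_k\|^2_{H^{1/2}_k(\Sigma^\ell_{00})} < +\infty$ for every $\phi^0_k$, so in particular $\cP_k^p\phi^0_k \to 0$. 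If $\lambda$ were a nonzero eigenvalue of $\cP_k$ with eigenvector $\psi \neq 0$, then $\cP_k^p\psi = \lambda^p\psi$ and strong convergence would force $|\lambda|<1$. Since $\cP_k$ is compact, the Riesz--Schauder theorem implies that its nonzero spectrum is at most a sequence accumulating only at $0$, so $r(\cP_k) = \sup\{|\lambda| : \lambda \in \sigma(\cP_k)\}$ is either zero or attained at some eigenvalue; in every case $r(\cP_k) < 1$.

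The most delicate point will be the tiling argument used to bound $\sum_p \|\widehat{u}^H_k\|^2_{H^1(\mathcal{C}_{p,0})}$ by the norm on $\Omega^W$: because the lattice is oblique (${\bf e}_1 = (3d/2,\sqrt{3}d/2)$), each horizontal step shifts vertically by $L/2$, so one must pick for each $p$ a suitable offset $q_p \in \Z$ ensuring that the translated cells $\mathcal{C}_{p,q_p}$ are pairwise disjoint and contained in $\Omega^W$, and then invoke quasiperiodicity to identify their $H^1$ norms with those on $\mathcal{C}_{p,0}$. Beyond this geometric bookkeeping, the argument relies only on standard interior elliptic regularity, the trace theorem, and the spectral theory of compact operators.
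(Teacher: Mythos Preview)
Your proposal is correct and follows essentially the same route as the paper: compactness via interior elliptic regularity plus Sobolev embedding, and the eigenvalue bound via the summability $\sum_{p\geq 0}\|\widehat{u}^H_k(E^{QP}_k\cP_k^p\phi)\|^2_{\mathcal{C}_{00}}<\infty$ obtained from Theorem~\ref{reconsu} and the $k$-quasiperiodicity. Your explicit appeal to Riesz--Schauder theory to pass from ``every eigenvalue has modulus $<1$'' to ``$r(\cP_k)<1$'' in fact makes precise a step the paper leaves implicit, and your discussion of the oblique-lattice tiling is a welcome clarification of the identification $\int_{\Omega^W}|u|^2 \simeq \sum_p\int_{\cC_{p0}}|u|^2$.
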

\begin{proof}
	The compactness of $\cP_k\in\mathcal{L}(H^{1/2}_k(\Sigma_{00}^{\ell}) )$ follows easily from  interior regularity and Sobolev compactness embedding arguments.
	\\\\ By definition of the space $H^1_k(\triangle,\Omega^H)$ (see Remark~\ref{rem:H1_ky}), $\widehat{u}^H_k(\widehat{\phi}_k)$ satisfies in particular for any $\widehat{\phi}_k\in H^{1/2}_k(\Sigma^H)$ 
	\[
		\dsp\int_{\Omega^W}\abs{\widehat{u}^H_k(\widehat{\phi}_k)}^2 \, <+\infty.
	\]
	Due to the $k$-quasiperiodicity of $\widehat{u}^H_k(\widehat{\phi}_k)$, the above relation is equivalent to
	\[
		\dsp\int_{\cup_{p\in\N}\,\cC_{p0}}\abs{\widehat{u}^H_k(\widehat{\phi}_k)}^2 \, <+\infty
	\]
	Moreover, using Theorem \ref{Thu}, we have
	\[\begin{array}{rcl}
		\dsp\int_{\cup_{p\in\N}\,\cC_{p0}}\abs{\widehat{u}^H_k(\widehat{\phi}_k)}^2 \, &=&\dsp \sum_{p\in\N}\int_{\cC_{p0}}\abs{\widehat{u}^H_k(\widehat{\phi}_k)}^2 \, \\[5pt]
		&=&\dsp \sum_{p\in\N}\int_{\cC_{00}}\abs{\widehat{u}^H_k(E^{QP}_k\,\cP_k^p\,R^{QP}_k\widehat{\phi}_k)}^2\,.
	\end{array}
	\]
	Consequently, if $\lambda$ is eigenvalue of $\cP_k$ and $\varphi$ an associated eigenvector, combining the last two relations for $ \widehat{\phi}_k=\varphi$ shows that
	\[
		\left(\sum_{p\in\N}\abs{\lambda}^p\right)\int_{\cC_{00}}\abs{\widehat{u}^H_k(\varphi)}^2\,<+\infty
	\]
	from which we get that
	\[
		\abs{\lambda}<1.
	\]
\end{proof}

\noindent According to Theorem \ref{Thu}, the solution of the half-space problem \eqref{PbDemiespsec3} for a $k-$quasi\-periodic Dirichlet condition is completely determined on the whole domain $\Omega^H$ as soon as it is known on the reference periodicity cell $\mathcal{C}_{00}$, provided the propagation operator $\mathcal{P}_k$ is also known.

\subsubsection{Elementary problems} 
\label{ssub:elementary_cell_problems}

Like in \cite{TheseSonia,FlissAPNUM}, introducing elementary problems allows us to restrict the half-space problem for quasiperiodic boundary data to the determination of the propagation operator $\cP_k$. 
\\\\
More precisely, given $\phi^0_k\in H^{1/2}_k(\Sigma^{\ell}_{00})$, let us introduce the solutions of the following problems set in the vertical strip $\Omega_0=\bigcup_{q\in\mathbb{Z}}\, \mathcal{C}_{0q}$

\begin{itemize}
	\item $E^\ell_k(\phi^0_k)\in H^1_k(\Delta,\Omega_0)$ is the unique solution of (see Figure~\ref{Figpbband})
		\begin{equation}
		\label{CellPb1}
		\left\{
		\begin{array}{ll}
		\Delta E^\ell_k(\phi^0_k) + \rho E^\ell_k(\phi^0_k)  =0,&\quad\hbox{in } \Omega_0, \\[5pt]
		E^\ell_k(\phi^0_k)=\phi^0_k,&\quad\hbox{on }\Sigma_{00}^{\ell} ,\\[5pt]
		E^\ell_k(\phi^0_k)=0,&\quad\hbox{on }\Sigma_{10}^{\ell},
		\end{array}
		\right.
		\end{equation}
	\item 	$E^r_k(\phi^0_k)\in H^1_k(\Delta,\Omega_0)$ is the unique solution of (see Figure~\ref{Figpbband})
			\begin{equation}
			\label{CellPb2}
			\left\{
			\begin{array}{ll}
			\Delta E^r_k(\phi^0_k) + \rho E^r_k(\phi^0_k)  =0,&\quad\hbox{in } \Omega_0, \\[5pt]
			E^r_k(\phi^0_k)=0,&\quad\hbox{on }\Sigma_{00}^{\ell} ,\\[5pt]
			E^r_k(\phi^0_k)=\phi^0_k,&\quad\hbox{on }\Sigma_{10}^{\ell}.
			\end{array}
			\right.
			\end{equation}
\end{itemize}
Let us emphasize that the boundary conditions are described only for the two left lateral sides of  ${\mathcal C}_{00}$ and ${\mathcal C}_{10}$, as the boundary condition on $\Sigma_{0q}^{\ell}$ (resp. on $\Sigma_{1q}^{\ell}$) for $q\in\Z^*$ follows directly from the one on $\Sigma_{00}^{\ell}$ (resp. on $\Sigma_{10}^{\ell}$)  by $k-$quasiperiodicity. 
\begin{figure}[ht]
\begin{center}
\begin{tabular}{lll}
\psfrag{a}[c][][1.0]{$\mathcal{C}_{00}$}
\psfrag{b}[r][][1.0]{\color{mblue}$\left.E_k^\ell\right|_{\Sigma_{00}^\ell}=\phi_k^0$}
\psfrag{c}[l][][1.0]{\color{mblue}$\left.E_k^\ell\right|_{\Sigma_{10}^\ell}=0$}
  \includegraphics[width=.15\textwidth]{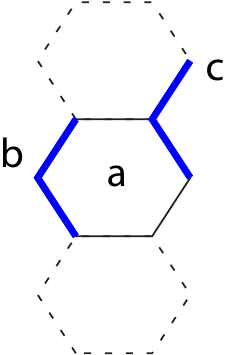}
&\qquad \qquad \qquad\qquad\qquad\qquad&
\psfrag{a}[c][][1.0]{$\mathcal{C}_{00}$}
\psfrag{b}[r][][1.0]{\color{mblue}$\left.E_k^r\right|_{\Sigma_{00}^\ell}=0$}
\psfrag{c}[l][][1.0]{\color{mblue}$\left.E_k^r\right|_{\Sigma_{10}^\ell}=\phi_k^0$}
  \includegraphics[width=.15\textwidth]{Figs/fig11.pdf}
\end{tabular}
\end{center}
\caption{The two strip problems for $E_k^\ell$ (on the left) and $E_k^r$ (on the right)}
\label{Figpbband}
\end{figure}
~\\\\ We also introduce the elementary cell solutions
\[
	e^\ell_k(\phi^0_k) = \left.E^\ell_k(\phi^0_k)\right|_{\mathcal{C}_{00}}\qquad\text{and}\qquad e^r_k(\phi^0_k) = \left.E^r_k(\phi^0_k)\right|_{\mathcal{C}_{00}}.
\]
Conversely, it is clear that $E^{\ell}_k(\phi^0_k)$ and $E^r_k(\phi^0_k)$ are uniquely determined by the above elementary cell solutions due to their quasiperiodicity. Therefore, in practice, one simply needs to solve the corresponding cell problems set in the reference periodicity cell $\cC_{00}$. One might thus wonder why we have introduced the strip problems and not directly the cell problems. In fact, it turns out that viewing the elementary cell solutions as restrictions of the strip problems leads to more compact and simpler expressions in the following.
\\\\
The main advantage of these elementary problems lies in the fact that, by linearity of \eqref{PbDemiespsec3}, one has 
\[
\left.\widehat{u}^H_k(\widehat{\phi}_k)\right|_{\Omega_0}=E^\ell_k(R^{QP}_k\,\widehat{\phi}_k)+E^r_k(\cP_kR^{QP}_k\widehat{\phi}_k),
\]
and then in the reference cell
\begin{equation}
\label{uC00}
\left. \widehat{u}^H_k(\widehat{\phi}_k)\right|_{\mathcal{C}_{00}}=e^\ell_k(\phi_k^0)+e^r_k(\cP_k\phi_k^0).
\end{equation}

\subsubsection{The Ricatti equation for the determination of the propagation operator} 
\label{ssub:the_ricatti_equation_for_the_determination_of_the_propagator}

Assuming the elementary cell solutions are known, it remains to determine the propagation operator $\cP_k.$ To this end, we use the same strategy as in \cite{TheseSonia}. In short, the equation characterizing the propagation operator $\cP_k$ is obtained by writing the continuity of the normal derivative of $\widehat{u}^H_k$ across each boundary $\Sigma_{pq}^{\ell}$, which is ensured by Theorem \ref{Thu}.
\\\\
To this end, we introduce four local DtN operators associated to the elementary problems \eqref{CellPb1} and \eqref{CellPb2}. We refer the reader to Section \ref{ssub:floquet_bloch_transform} for the definition of the spaces $H^{1/2}_k(\Sigma^{\ell}_{00})$ and $H^{-1/2}_k(\Sigma^{\ell}_{00})$.
\begin{definition}
\label{defDtN}
We introduce the following local DtN operators
\[
 \mathcal{T}^{ij}_k\in \mathcal{L}\left(H^{1/2}_k(\Sigma^{\ell}_{00}), H^{-1/2}_k(\Sigma^{\ell}_{00})\right),\qquad  i,j\in\{\ell,r\},
\]
where for all $\phi_k^0 \in H^{1/2}_k(\Sigma^{\ell}_{00})$:
\[
	\begin{array}{ccc}
		 \mathcal{T}^{\ell\ell}_k\phi_k^0 = \left.\nabla E^\ell_k(\phi_k^0)\cdot \nu \,\right|_{\Sigma_{00}^{\ell}}&\qquad&\mathcal{T}^{\ell r}_k\phi_k^0 = \nabla E^\ell_k(\phi_k^0)\cdot \nu \,\left|_{\Sigma_{10}^{\ell}}\right.\\[5pt]
		\mathcal{T}^{r\ell}_k\phi_k^0 = \nabla E^r_k(\phi_k^0)\cdot \nu \,\left|_{\Sigma_{00}^{\ell}}\right.&\qquad&\mathcal{T}^{r r}_k\phi_k^0 = \nabla E^r_k(\phi_k^0)\cdot \nu \,\left|_{\Sigma_{10}^{\ell}}\right.
	\end{array}
\]
where $\nu$ is the outgoing unit normal to $\mathcal{C}_{00}$.
\end{definition}

\begin{figure}[ht]
\begin{center}
\begin{tabular}{lll}
\psfrag{a}[c][][0.8]{$E_k^\ell(\phi_k^0)$}
\psfrag{b}[r][][1.0]{\color{mblue}$\phi_k^0$}
\psfrag{c}[l][][1.0]{\color{mblue}$0$}
\psfrag{d}[c][][0.8]{$E_k^\ell(\phi_k^0)$}
\psfrag{e}[r][][1.0]{\color{mred}$\mathcal{T}^{\ell\ell}_k\phi_k^0$}
\psfrag{f}[l][][1.0]{\color{mred}$\mathcal{T}^{\ell r}_k\phi_k^0$}
  \includegraphics[width=.4\textwidth]{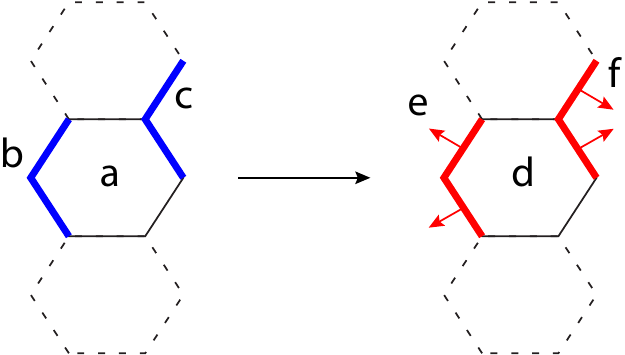}
&\quad \quad &
\psfrag{a}[c][][0.8]{$E_k^r(\phi_k^0)$}
\psfrag{c}[l][][1.0]{\color{mblue}$\phi_k^0$}
\psfrag{b}[r][][1.0]{\color{mblue}$0$}
\psfrag{d}[c][][0.8]{$E_k^r(\phi_k^0)$}
\psfrag{e}[r][][1.0]{\color{mred}$\mathcal{T}^{r\ell}_k\phi_k^0$}
\psfrag{f}[l][][1.0]{\color{mred}$\mathcal{T}^{r r}_k\phi_k^0$}
  \includegraphics[width=.4\textwidth]{Figs/fig20.pdf}
\end{tabular}
\end{center}
\caption{The four local DtN operators}
\label{fig:DtN_locaux}
\end{figure}

\noindent The characterization of the operator $\cP_k$ is then given by the following result.
\begin{theorem}
\label{ThP}
The propagation operator $\cP_k$ is the unique compact operator of $\cL(H^{1/2}_k(\Sigma^{\ell}_{00}))$ with spectral radius strictly less than 1 solution of the stationary Riccati equation
\begin{equation}
\label{Riccati}
\mathcal{T}^{r\ell}_k\cP_k^2+(\mathcal{T}^{\ell \ell}_k+\mathcal{T}^{rr}_k)\cP_k + \mathcal{T}^{\ell r}_k=0.
\end{equation}
\end{theorem}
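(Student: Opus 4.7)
The plan is to establish three statements: (a) $\cP_k$ solves the Riccati equation \eqref{Riccati}, (b) $\cP_k$ is compact with spectral radius strictly less than one, and (c) it is the unique such solution. Point (b) has already been guaranteed by Corollary \ref{cor:prop_Pk}, so no additional work is needed there.

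For (a), I would exploit the continuity of the conormal derivative of $\widehat{u}^H_k(\widehat{\phi}_k)$ across the interior boundary $\Sigma^\ell_{10}$. By Theorem \ref{reconsu} combined with the cellwise decomposition \eqref{uC00}, setting $\phi^0_k := R^{QP}_k \widehat{\phi}_k$, one has
$$\widehat{u}^H_k(\widehat{\phi}_k)\big|_{\cC_{00}} = e^\ell_k(\phi^0_k) + e^r_k(\cP_k \phi^0_k), \qquad \widehat{u}^H_k(\widehat{\phi}_k)\big|_{\cC_{10}} = e^\ell_k(\cP_k \phi^0_k) + e^r_k(\cP_k^2 \phi^0_k).$$
Since $\widehat{u}^H_k(\widehat{\phi}_k) \in H^1(\Delta,\Omega^H)$, the normal trace of its gradient on $\Sigma^\ell_{10}$ is well-defined from both sides and the two values must coincide. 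Using Definition \ref{defDtN} and observing that the outgoing unit normals to $\cC_{00}$ and $\cC_{10}$ along $\Sigma^\ell_{10}$ are opposite, I obtain
$$\mathcal{T}^{\ell r}_k \phi^0_k + \mathcal{T}^{rr}_k \cP_k \phi^0_k + \mathcal{T}^{\ell\ell}_k \cP_k \phi^0_k + \mathcal{T}^{r\ell}_k \cP_k^2 \phi^0_k = 0,$$
which is exactly \eqref{Riccati}, since $\phi^0_k$ ranges over all of $H^{1/2}_k(\Sigma^\ell_{00})$.

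For (c), let $\cQ \in \cL(H^{1/2}_k(\Sigma^\ell_{00}))$ be compact with spectral radius strictly less than one and satisfying \eqref{Riccati}. Given $\widehat{\phi}_k \in H^{1/2}_k(\Sigma^H)$, I would define a candidate $v$ on $\Omega^H$ cellwise by
$$v\big|_{\cC_{pq}} = e^{\imath qkL}\bigl[e^\ell_k(\cQ^p \phi^0_k) + e^r_k(\cQ^{p+1} \phi^0_k)\bigr],\qquad p \in \N,\ q \in \Z,$$
and verify that $v$ solves the half-space problem $(\cP^H)$ with Dirichlet data $\widehat{\phi}_k$. Continuity of $v$ across the vertical interfaces $\Sigma^\ell_{p+1,0}$ holds by construction of the elementary cell solutions $e^\ell_k$ and $e^r_k$, while continuity of the normal derivatives there is precisely the Riccati equation applied to $\cQ^p \phi^0_k$. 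The $k$-quasiperiodic extension handles horizontal interfaces. The crucial point is that membership of $v$ in $H^1(\Delta,\Omega^H)$ follows from $\rho(\cQ)<1$: Gelfand's formula gives $\|\cQ^p\|^{1/p} \to \rho(\cQ)$, so $\|\cQ^p\| \le C r^p$ for some $r<1$, and the cellwise $H^1$ norms of $v$ sum geometrically. By uniqueness of the solution of $(\cP^H)$, $v = \widehat{u}^H_k(\widehat{\phi}_k)$, and taking traces on $\Sigma^\ell_{10}$ yields $\cQ\phi^0_k = \cP_k\phi^0_k$ for arbitrary $\phi^0_k$, hence $\cQ = \cP_k$.

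The main obstacle is (c): rigorously assembling the cellwise pieces of $v$ into a global $H^1(\Delta,\Omega^H)$ function. This requires both checking regularity across each interior interface, where the Riccati equation plays its essential role, and controlling the global integrability, where the spectral radius bound is indispensable.
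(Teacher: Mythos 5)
Your proposal is correct and follows essentially the same route as the paper: the Riccati equation is derived from the matching of normal derivatives across $\Sigma^{\ell}_{10}$ using the decomposition \eqref{uC00} and the opposite orientation of the outgoing normals, and uniqueness is obtained by building the cellwise candidate $v$ from a putative solution $\cQ$ and invoking well-posedness of the half-space problem. Your added justification of $v\in H^1(\Delta,\Omega^H)$ via Gelfand's formula and geometric summation of the cellwise norms is a welcome elaboration of a step the paper only sketches.
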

\begin{proof}
The proof involves two steps: we first show that $\cP_k$ satisfies \eqref{Riccati} and then we prove that \eqref{Riccati} has a unique solution with spectral radius strictly less than 1.\\
\\Step 1: $\cP_k$ satisfies \eqref{Riccati}. 
First of all, according to Corollary \ref{cor:prop_Pk}, $\cP_k$ is an operator in $\cL(H^{1/2}_k(\Sigma^{\ell}_{00}))$ with spectral radius strictly less than 1. Moreover, using the continuity of the normal derivative of $\widehat{u}^H_k$, we obtain using equations \eqref{uC10} and \eqref{uC00}, that for any $\phi_k^0\in H^{1/2}_k(\Sigma^{\ell}_{00})$
$$\left.\left(\nabla E^\ell_k(\phi_k^0)+ \nabla E^r_k(\cP_k\phi_k^0)\right)\cdot \nu\right|_{ \Sigma_{10}^{\ell} }= -
   \left.\left(\nabla E^\ell_k(\cP_k\phi_k^0)+ \nabla E^r_k(\cP_k^2\phi_k^0)\right)\cdot \nu\right|_{ \Sigma_{00}^{\ell} },
$$
which gives
$$\mathcal{T}^{\ell r}_k(\phi_k^0)+\mathcal{T}^{rr}_k(\cP_k\phi_k^0)=-\mathcal{T}^{\ell \ell}_k(\cP_k\phi_k^0)-\mathcal{T}^{r\ell}_k(\cP_k^2\phi_k^0).
$$
Since $\phi_k^0\in H^{1/2}_k(\Sigma^{\ell}_{00})$ is arbitrary, we get
$$\mathcal{T}_k^{\ell r}+(\mathcal{T}_k^{rr}+\mathcal{T}_k^{\ell \ell})\cP_k+\mathcal{T}_k^{r\ell}\cP_k^2=0,$$
which is exactly \eqref{Riccati}.\\\\
Step 2: Uniqueness for \eqref{Riccati}. This result follows immediately from the uniqueness of the solution of \eqref{PbDemiespsec3}. Indeed, assume that $\cP\in\cL(H^{1/2}_k(\Sigma^{\ell}_{00}))$  has a spectral radius strictly less than 1 and satisfies
\[
\mathcal{T}^{r\ell}_k\cP^2+(\mathcal{T}^{\ell \ell}_k+\mathcal{T}^{rr}_k)\cP + \mathcal{T}^{\ell r}_k=0.
\]
Then, one can easily show that for any $\widehat{\phi}_k\in H^{1/2}_k(\Sigma^H)$ , the function $v$ defined in each cell ${\cC_{pq}}$, $p\in\N,~q\in \Z$, by 
\[
	v\left|_{\cC_{pq}}\right.=e^{\imath qkL}e^\ell_k(\cP^p\,R^{QP}_k\,\widehat{\phi}_k)+e^r_k(\cP^{p+1}\,R^{QP}_k\,\widehat{\phi}_k),
\]
belongs to $H^1_k(\Delta,\Omega^H)$ (thanks to the definition of $e^{\ell}_k$ and $e^r_k$ and using the fact that $\cP$ is a solution of the Riccati equation with spectral radius strictly less than 1) and solves \eqref{PbDemiespsec3}. Then, the uniqueness of the solution of \eqref{PbDemiespsec3} in $H^1_k(\Delta,\Omega^H)$ implies that
\[
	v=\widehat{u}^H_k(\widehat{\phi}_k),
\]
and by definition of $v$ and $\cP_k$, we deduce that $\cP=\cP_k$.
\end{proof}
~\\
By Theorem \ref{Thu} and expression \eqref{uC00}, solving the elementary problems \eqref{CellPb1}-\eqref{CellPb2} and the Riccati equation \eqref{Riccati} allows us to reconstruct cell by cell the unique solution of $(\mathcal{P}^H)$ (see \eqref{PbDemiespsec3}) in the case of quasiperiodic boundary data.
\\\\
Finally, we deduce from the above analysis the expression of the DtN operator for quasiperiodic boundary condition: 
\begin{proposition}[DtN operator for quasiperiodic boundary data]\label{prop:DtN_k}
	Suppose that the data $\widehat{\phi}_k\in H^{1/2}_k(\Sigma^H)$, the DtN operator is given by
	\begin{equation}\label{eq:Lambda_H_chapeau}
	\widehat{\Lambda}^H_k\widehat{\phi}_k = E^{QP}_k\,\mathcal{T}_k^{\ell \ell}\,R^{QP}_k\,\widehat{\phi}_k + E^{QP}_k\,\mathcal{T}_k^{r \ell}\cP_k\,R^{QP}_k\,\widehat{\phi}_k.
\end{equation}
\end{proposition}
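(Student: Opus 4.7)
The plan is to derive the formula by combining the cell decomposition \eqref{uC00} of $\widehat{u}^H_k(\widehat{\phi}_k)$ with the definition of the local DtN operators, and then to propagate the resulting identity from $\Sigma_{00}^\ell$ to the full boundary $\Sigma^H$ by $k$-quasiperiodicity. Since $\Sigma^H=\bigcup_{q\in\Z}\Sigma_{0q}^\ell$, and since Theorem \ref{Thu} (specialized to $p=0$) already expresses $\widehat{u}^H_k(\widehat{\phi}_k)$ on each $\mathcal{C}_{0q}$ as $e^{iqkL}$ times its restriction to $\mathcal{C}_{00}$, it is enough to compute the normal trace on the single cell face $\Sigma_{00}^\ell$ and then apply the extension operator $E^{QP}_k$.

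Setting $\phi_k^0:=R^{QP}_k\widehat{\phi}_k$, relation \eqref{uC00} gives
\[
\widehat{u}^H_k(\widehat{\phi}_k)\big|_{\mathcal{C}_{00}} = e^\ell_k(\phi_k^0)+e^r_k(\mathcal{P}_k\phi_k^0).
\]
A preliminary remark is that on $\Sigma_{00}^\ell$ the outgoing unit normal $\nu^H$ to $\Omega^H$ and the outgoing unit normal $\nu$ to $\mathcal{C}_{00}$ both point into the complement of $\Omega^H$ on the same side, so they coincide. Both summands above are restrictions to $\mathcal{C}_{00}$ of functions in $H^1_k(\Delta,\Omega_0)$, so their normal traces are well defined in $H^{-1/2}_k(\Sigma_{00}^\ell)$. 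Applying $\partial_{\nu^H}(\cdot)|_{\Sigma_{00}^\ell}$ to the identity above and invoking Definition \ref{defDtN} (the contributions to $\mathcal{T}^{\ell\ell}_k$ and $\mathcal{T}^{r\ell}_k$ come from the traces on $\Sigma_{00}^\ell$ of $E^\ell_k$ and $E^r_k$ respectively), I would obtain
\[
\left.\frac{\partial \widehat{u}^H_k(\widehat{\phi}_k)}{\partial\nu^H}\right|_{\Sigma_{00}^\ell} = \mathcal{T}^{\ell\ell}_k\phi_k^0 + \mathcal{T}^{r\ell}_k(\mathcal{P}_k\phi_k^0).
\]

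To conclude, I would propagate this local identity to all of $\Sigma^H$. By the $k$-quasiperiodicity of $\widehat{u}^H_k(\widehat{\phi}_k)$ in the $y$-direction (a consequence of Theorem \ref{Thu} and the well-posedness in $H^1_k(\Delta,\Omega^H)$), its normal derivative on $\Sigma_{0q}^\ell$ equals $e^{iqkL}$ times its value on $\Sigma_{00}^\ell$. This is exactly the action of the $k$-quasiperiodic extension operator $E^{QP}_k$ (extended to $H^{-1/2}_k(\Sigma_{00}^\ell)$ by duality as in Section \ref{ssub:floquet_bloch_transform}). Combining these steps yields
\[
\widehat{\Lambda}^H_k\widehat{\phi}_k = E^{QP}_k\mathcal{T}^{\ell\ell}_k R^{QP}_k\widehat{\phi}_k + E^{QP}_k\mathcal{T}^{r\ell}_k\mathcal{P}_k R^{QP}_k\widehat{\phi}_k,
\]
which is the claimed formula \eqref{eq:Lambda_H_chapeau}.

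The proof is essentially a bookkeeping of definitions, with no genuine analytic difficulty. The only subtle points are the correct identification of the outgoing normals on $\Sigma_{00}^\ell$ and the justification that all traces are taken in the appropriate (duality) sense on the quasiperiodic spaces $H^{\pm 1/2}_k(\Sigma_{00}^\ell)$; both issues have already been set up in Section \ref{ssub:floquet_bloch_transform} and Definition \ref{defDtN}.
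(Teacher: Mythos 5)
Your argument is correct and is precisely the deduction the paper intends: the paper states Proposition \ref{prop:DtN_k} without a written proof, as an immediate consequence of \eqref{uC00}, Definition \ref{defDtN} and the $k$-quasiperiodic extension, which is exactly the bookkeeping you carry out (including the correct identification of $\nu^H$ with the outgoing normal to $\mathcal{C}_{00}$ on $\Sigma_{00}^\ell$). Nothing is missing.
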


\subsubsection{Additional tools} 
\label{ssub:additional_tools}
\noindent We conclude this subsection by introducing four local Dirichlet-to-Dirichlet (DtD) operators associated to the cell $\mathcal{C}_{00}$ that will be needed in the sequel.
\begin{definition}
\label{defDtD}
We define the local DtD operators by setting for all $ \phi_k^0 \in H^{1/2}_k(\Sigma^{\ell}_{00})$:
\[
	\begin{array}{ccc}
		 \mathcal{D}^{\ell+}_k\phi_k^0 =  E^\ell_k(\phi_k^0)\left|_{\Gamma_{00}^{ +}}\right.&\qquad\text{and}\qquad& \mathcal{D}^{\ell-}_k\phi_k^0 =  E^\ell_k(\phi_k^0)\left|_{\Gamma_{00}^{ -}},\right.\\[5pt]
		\mathcal{D}^{r+}_k\phi_k^0 =  E^r_k(\phi_k^0)\left|_{\Gamma_{00}^{ +}}\right.&\qquad\text{and}\qquad&\mathcal{D}^{r-}_k\phi_k^0 =  E^r_k(\phi_k^0)\left|_{\Gamma_{00}^{-}}.\right.
	\end{array}
\]
Identifying $\Sigma^{\ell}_{00}$, $\Gamma_{00}^{-}$ and $\Gamma_{00}^{+}$, the above operators will be considered as bounded linear operators from $H^{1/2}_k(\Sigma^{\ell}_{00})$ onto $H^{1/2}(\Sigma^{\ell}_{00})$.
\end{definition}

\begin{figure}[ht]
\begin{center}
\begin{tabular}{lll}
\psfrag{a}[c][][0.8]{$E_k^\ell(\phi_k^0)$}
\psfrag{b}[r][][1.0]{\color{mblue}$\phi_k^0$}
\psfrag{c}[l][][1.0]{\color{mblue}$0$}
\psfrag{d}[c][][0.8]{$E_k^\ell(\phi_k^0)$}
\psfrag{e}[c][][1.0]{\color{mblue}$\mathcal{D}^{\ell +}_k\phi_k^0$}
\psfrag{f}[c][][1.0]{\color{mblue}$\mathcal{D}^{\ell -}_k\phi_k^0$}
  \includegraphics[width=.4\textwidth]{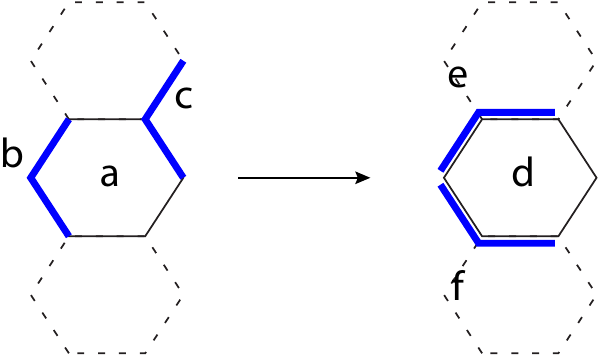}
&\quad \quad &
\psfrag{a}[c][][0.8]{$E_k^r(\phi_k^0)$}
\psfrag{c}[l][][1.0]{\color{mblue}$\phi_k^0$}
\psfrag{b}[r][][1.0]{\color{mblue}$0$}
\psfrag{d}[c][][0.8]{$E_k^r(\phi_k^0)$}
\psfrag{e}[c][][1.0]{\color{mblue}$\mathcal{D}^{r+}_k\phi_k^0$}
\psfrag{f}[c][][1.0]{\color{mblue}$\mathcal{D}^{r -}_k\phi_k^0$}
  \includegraphics[width=.4\textwidth]{Figs/fig22.pdf}
\end{tabular}
\end{center}
\caption{The four local DtD operators}
\label{fig:DtD_locaux}
\end{figure}

\noindent Using Theorem \ref{Thu} and expression \eqref{uC00} we obtain the following result.
\begin{corollary}\label{cor:trace_uH}
For any $\widehat{\phi}_k\in H^{1/2}_k(\Sigma^H)$, for any $p\in\N$ and $q\in\Z$, we have
\[
\widehat{u}^H_k(\widehat{\phi}_k)\left|_{\Gamma_{pq}^{+} }\right.=e^{\imath q kL}\left[\mathcal{D}_k^{\ell +}  \, \cP^p_k\,R^{QP}_k\,\widehat{\phi}_k + \mathcal{D}_k^{r +}\,\cP_k^{p+1}\,R^{QP}_k\,\widehat{\phi}_k\right],
\]
\[
\widehat{u}^H_k(\widehat{\phi}_k)\left|_{\Gamma_{pq}^{-} }\right.=e^{\imath q kL}\left[\mathcal{D}_k^{\ell -} \, \cP^p_k\,R^{QP}_k\,\widehat{\phi}_k + \mathcal{D}_k^{r -} \, \cP_k^{p+1}\,R^{QP}_k\,\widehat{\phi}_k\right].
\]
\end{corollary}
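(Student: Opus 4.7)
The proof is a direct assembly of three ingredients already established: the cell-to-cell reconstruction formula of Theorem \ref{Thu}, the in-cell decomposition \eqref{uC00} into elementary cell solutions, and the Definition \ref{defDtD} of the local DtD operators.

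My plan is to first apply Theorem \ref{Thu} to express the trace of $\widehat{u}^H_k(\widehat{\phi}_k)$ on $\Gamma_{pq}^{\pm}\subset \mathcal{C}_{pq}$ in terms of the trace in the reference cell $\mathcal{C}_{00}$. Setting $\phi_k^0=R_k^{QP}\widehat{\phi}_k$, relation \eqref{uCnm} gives immediately
\[
\widehat{u}^H_k(\widehat{\phi}_k)\bigl|_{\Gamma_{pq}^{\pm}} = e^{\imath q k L}\,\widehat{u}^H_k\bigl(E_k^{QP}\,\cP_k^p\,\phi_k^0\bigr)\bigl|_{\Gamma_{00}^{\pm}},
\]
after identifying $\Gamma_{pq}^{\pm}$ with its translate $\Gamma_{00}^{\pm}$. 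This already accounts for the phase factor $e^{\imath q k L}$ in both formulas.

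Next, I would apply the in-cell decomposition \eqref{uC00} to the half-space solution $\widehat{u}^H_k(E_k^{QP}\,\cP_k^p\,\phi_k^0)$. Since its Dirichlet trace on $\Sigma_{00}^{\ell}$ is $R_k^{QP}(E_k^{QP}\,\cP_k^p\,\phi_k^0)=\cP_k^p\,\phi_k^0$, formula \eqref{uC00} yields
\[
\widehat{u}^H_k\bigl(E_k^{QP}\,\cP_k^p\,\phi_k^0\bigr)\bigl|_{\mathcal{C}_{00}} = e_k^{\ell}\bigl(\cP_k^p\,\phi_k^0\bigr) + e_k^{r}\bigl(\cP_k\,\cP_k^p\,\phi_k^0\bigr) = e_k^{\ell}\bigl(\cP_k^p\,\phi_k^0\bigr) + e_k^{r}\bigl(\cP_k^{p+1}\,\phi_k^0\bigr).
\]
Restricting this identity to $\Gamma_{00}^{\pm}$ and using Definition \ref{defDtD}, which reads $\mathcal{D}_k^{\ell\pm}\psi = e_k^{\ell}(\psi)|_{\Gamma_{00}^{\pm}}$ and $\mathcal{D}_k^{r\pm}\psi = e_k^{r}(\psi)|_{\Gamma_{00}^{\pm}}$, gives the two bracketed expressions in the claim. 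Combining with the phase factor from the first step produces exactly the two stated formulas.

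The argument is purely a substitution chain, so there is no genuine obstacle; the only point requiring a line of justification is the tacit identification of trace spaces on $\Sigma_{pq}^{\ell}$, $\Gamma_{pq}^{\pm}$ with those on the reference cell, already agreed upon in Remark \ref{rem:ident_spaces}. Consequently I would keep the proof very short, essentially chaining \eqref{uCnm}, \eqref{uC00} and Definition \ref{defDtD}.
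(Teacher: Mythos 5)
Your proof is correct and follows exactly the route the paper intends: the paper derives this corollary directly from Theorem \ref{Thu} and expression \eqref{uC00} combined with Definition \ref{defDtD}, which is precisely your substitution chain. The only cosmetic point is that Definition \ref{defDtD} is phrased via the strip solutions $E_k^{\ell}, E_k^{r}$ rather than the cell solutions $e_k^{\ell}, e_k^{r}$, but since $\Gamma_{00}^{\pm}\subset\partial\mathcal{C}_{00}$ the two restrictions coincide, so your identification is harmless.
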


\section{Characterization of the DtD operator}\label{sec:opextension}
This section is devoted to the determination of the DtD operator $D_{2\pi/3}$. We first show in Section \ref{sub:the_affine_equation} that it solves an affine equation involving an operator $D^H$ associated with the half-space problem. Using the FB transform, we derive a semi-analytic expression for $D^H$ in Section \ref{sub:DH_def}. Finally, we deduce in Section \ref{sub:towards_the_integral_equation} an equivalent integral formulation of the affine equation 
which is more suitable for future numerical approximation.
\subsection{The affine equation} 
\label{sub:the_affine_equation}

\begin{figure}[ht]
  \centering

\psfrag{a}[l][][1.0]{$\Sigma^{0}$}
\psfrag{b}[l][][1.0]{$\Sigma^+$}
\psfrag{c}[c][][1.0]{$\Sigma^-_{2\pi/3}$}
\psfrag{d}[c][][1.0]{$\Sigma^+_{2\pi/3}$}
\psfrag{e}[l][][1.0]{$\Sigma^-$}
\psfrag{f}[c][][1.0]{$\mathcal{C}_{00}$}
  \includegraphics[width=.4\textwidth]{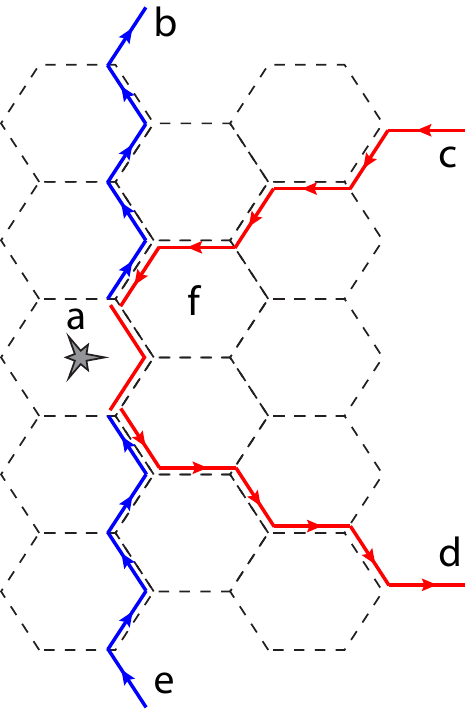}
  \caption{Symmetry axis}
  \label{notations_SigmaH}
\end{figure}

\noindent First of all let us introduce some useful notation (see Figure~\ref{notations_SigmaH}): $\Sigma^H=\Sigma^- \cup \Sigma^0 \cup \Sigma^+$, $\Sigma^-_{2\pi/3}=\Theta_{2\pi/3} \Sigma^-$, $\Sigma^+_{2\pi/3}=\Theta_{-2\pi/3} \Sigma^+$, where the center of the rotations $\Theta_{\pm2\pi/3}$ is the center of $\Omega^i$. Let us remind that $\Omega^H$ is the half-space at the right of $\Sigma^H$. \\\\
Let us recall that $D_{2\pi/3}$ is defined by (see Figure~\ref{fig:d2pis3})
\begin{center}
\begin{tabular}{rcc}
$D_{2\pi/3}: H^{1/2}(\Sigma^i)$ & $\longrightarrow$ & $H^{1/2}(\Sigma^H)$ \\
$\phi$ & $\longmapsto$ & $u^e(\phi)\left|_{\Sigma^H}\right. .$
\end{tabular}
\end{center}
We first remark that $D_{2\pi/3}$ belongs to the affine space:
$$\cL_{\Sigma^0}=\left\{L \in \cL(H^{1/2}_{2\pi/3}(\Sigma^i),H^{1/2}(\Sigma^H)),\;~\forall \phi\in H^{1/2}_{2\pi/3}(\Sigma^i) \quad~\left.L\phi\right|_{\Sigma^0}=\left.\phi\right|_{\Sigma^0} \right\}.$$
Let us introduce, $D^H$, called the half-space DtD operator, defined by
\begin{center}
\begin{tabular}{rcl}
$D^H: H^{1/2}(\Sigma^H)$ & $\longrightarrow$ & $H^{1/2}(\Sigma^H)$ \\[5pt]
$\psi$ & $\longmapsto$ & $ \left\{ \begin{array}{ll}\left. D^H \psi\right|_{\Sigma^-}\equiv \left.u^H(\psi)\right|_{\Sigma^-_{2\pi/3}} \\[6pt]  \,\left.D^H \psi\right|_{\Sigma^0}\equiv \left.u^H(\psi)\right|_{\Sigma_{0}} \\[5pt] 
\left.D^H \psi\right|_{\Sigma^+}\equiv \left.u^H(\psi)\right|_{\Sigma^+_{2\pi/3}}
\end{array}\right. $
\end{tabular}
\end{center}
where we have identified $\Sigma^-$ and $\Sigma^-_{2\pi/3},~\Sigma^+$ and $\Sigma^+_{2\pi/3}$ taking into account the directions shown in Figure~\ref{notations_SigmaH}.
\begin{remark}
The range of $D^H$ is included in $H^{1/2}(\Sigma^H)$ because for any $\psi\in H^{1/2}(\Sigma^H)$, $D^H\psi$ is nothing but the trace of the $H^1$ function $u^H(\psi)$ on the broken line $\Sigma^{-}_{2\pi/3}\cup\Sigma^0\cup\Sigma^+_{2\pi/3}$ (identifying $\Sigma^H$ with $\Sigma^-_{2\pi/3}\cup\Sigma^0\cup\Sigma^+_{2\pi/3}$).
\end{remark}

\noindent We have then the following fundamental theorem
\begin{theorem}
\label{THsec3_3}
The operator $D_{2\pi/3}$ is the unique solution of the problem: 
\begin{equation}
\label{PbD}
\mbox{Find } D \in \cL_{\Sigma^0} \mbox{ such that } D=D^H\circ D.
\end{equation} 
\end{theorem}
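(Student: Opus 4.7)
The proof splits into an existence and a uniqueness statement.

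\emph{Existence.} For any $\phi\in H^{1/2}_{2\pi/3}(\Sigma^i)$, Theorem \ref{corsec3_1} ensures that $u^e(\phi)$ is invariant under $\Theta_{2\pi/3}$. The restriction $u^e(\phi)|_{\Omega^H}\in H^1(\Delta,\Omega^H)$ solves $(\mathcal{P}^H)$ with Dirichlet data $D_{2\pi/3}\phi$, so uniqueness of the half-space problem gives $u^H(D_{2\pi/3}\phi)=u^e(\phi)|_{\Omega^H}$. I would then verify $D^H\circ D_{2\pi/3}\phi=D_{2\pi/3}\phi$ on each of the three pieces of $\Sigma^H$: the equality is tautological on $\Sigma^0$, while on $\Sigma^-$, for $x\in\Sigma^-$,
\[
\bigl(D^H D_{2\pi/3}\phi\bigr)(x)=u^H(D_{2\pi/3}\phi)(\Theta_{2\pi/3}x)=u^e(\phi)(\Theta_{2\pi/3}x)=u^e(\phi)(x)=D_{2\pi/3}\phi(x),
\]
using the hexagonal symmetry of $u^e(\phi)$ at the third equality; the analogous computation with $\Theta_{-2\pi/3}$ handles $\Sigma^+$. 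The inclusion $D_{2\pi/3}\in\cL_{\Sigma^0}$ is immediate from $u^e(\phi)=\phi$ on $\Sigma^i$.

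\emph{Uniqueness: construction.} Let $D\in\cL_{\Sigma^0}$ satisfy $D=D^H\circ D$. Fix $\phi$ and set $\psi:=D\phi$ and $v:=u^H(\psi)\in H^1(\Delta,\Omega^H)$. I would build a candidate global solution on $\Omega$ by pasting three rotated copies of $v$:
\[
u(x):=v\bigl(\Theta_{-2\pi j/3}x\bigr)\quad\text{for }x\in\Theta_{2\pi j/3}\Omega^H,\ j\in\{-1,0,1\}.
\]
These three half-spaces cover $\Omega$. Once $u$ is shown to be well-defined, it has hexagonal symmetry and satisfies $\Delta u+\rho u=0$ in $\Omega^e$ by construction; the hypothesis $D\in\cL_{\Sigma^0}$ forces $u|_{\Sigma^0}=\phi|_{\Sigma^0}$, which propagates to $u|_{\Sigma^i}=\phi$ by the hexagonal symmetry of both $u$ and $\phi$. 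Uniqueness for \eqref{Pbext} then gives $u|_{\Omega^e}=u^e(\phi)$, whence $D\phi=\psi=u|_{\Sigma^H}=D_{2\pi/3}\phi$, so $D=D_{2\pi/3}$.

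\emph{Main obstacle.} The crux is establishing consistency of $u$ on the pairwise overlaps of the three half-spaces. Consider $\Omega^H\cap\Theta_{2\pi/3}\Omega^H$: this is a wedge-like region containing $\Omega^i$ whose boundary inside $\Omega^e$ is made of one arc on $\Sigma^+\subset\Sigma^H$ and one arc on $\Theta_{2\pi/3}\Sigma^-\subset\Theta_{2\pi/3}\Sigma^H$. One must show $v(x)=v(\Theta_{-2\pi/3}x)$ throughout this wedge. Both sides solve the dissipative Helmholtz equation there, because $\rho$ commutes with $\Theta_{2\pi/3}$ by \textbf{(A1)}. The fixed-point identity $\psi=D^H\psi$, read directly on $\Sigma^+$ and transported to $\Theta_{2\pi/3}\Sigma^-$ via the substitution $x\mapsto\Theta_{2\pi/3}x$, yields the boundary equality on both arcs. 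Assumption \textbf{(A2)} then furnishes uniqueness of the $H^1$ finite-energy Dirichlet problem on this unbounded wedge, and the equality extends to the interior. The two remaining pairwise overlaps are treated in exactly the same way, which closes the construction.
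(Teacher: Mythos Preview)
Your proof is correct and follows essentially the same strategy as the paper: existence via $u^e(\phi)|_{\Omega^H}=u^H(D_{2\pi/3}\phi)$ and hexagonal symmetry, uniqueness by pasting three rotated copies of $u^H(D\phi)$, checking consistency on overlaps through the fixed-point relation and well-posedness of the Dirichlet problem on each wedge, and then invoking uniqueness for the exterior problem~\eqref{Pbext}. The only difference is cosmetic: the paper first reduces uniqueness to the homogeneous case ($D\phi|_{\Sigma^0}=0$) and shows the resulting pasted function vanishes, whereas you work directly with a general $\phi$ and identify the pasted function with $u^e(\phi)$.

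Two small geometric slips to fix: since $\Omega^H\subset\Omega^e$, the overlap $\Omega^H\cap\Theta_{2\pi/3}\Omega^H$ does \emph{not} contain $\Omega^i$, and the three rotated half-spaces cover $\Omega^e$ rather than all of $\Omega$. Neither affects the argument, as you only ever use the pasted function on $\Omega^e$.
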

\begin{remark}
Note that since $\cL_{\Sigma^0}$ is an affine space, the problem \eqref{PbD} is an affine problem, even though the equation is linear.
\end{remark}

\begin{proof}
\underline{Existence:} We prove that the operator $D_{2\pi/3}$ is a solution of \eqref{PbD}. We have already seen that
\begin{equation}
\label{PbD2}
u^e(\phi)\left|_{\Omega^H}\right.=u^H(D_{2\pi/3} \phi).
\end{equation}
Moreover, since $\phi$ is in $H^{1/2}_{2\pi/3} (\Sigma^i),~u^e(\phi)$ is in $H^1_{2\pi/3}(\Omega^e).$ In particular,
\begin{itemize}
	\item $\Sigma^-_{2\pi/3}=\Theta_{2\pi/3} \Sigma^-$ which implies $u^e(\phi)|_{\Sigma^-_{2\pi/3}} \equiv u^e(\phi)|_{\Sigma^-}$ and then \\ $u^H(D_{2\pi/3}\phi)|_{\Sigma^-_{2\pi/3}}=u^e(\phi)|_{\Sigma^-}$ using \eqref{PbD2},
	\item $u^e(\phi)|_{\Sigma^0}=\phi|_{\Sigma^0},$
	\item $\Sigma^+=\Theta_{2\pi/3} \Sigma^+_{2\pi/3}$ which implies $u^e(\phi)|_{\Sigma^+}\equiv u^e(\phi)|_{\Sigma^+_{2\pi/3}} $ and then $u^e(\phi)|_{\Sigma^+}=u^H(D_{2\pi/3}\phi)|_{\Sigma^+_{2\pi/3}}$ using \eqref{PbD2}.
\end{itemize}
Using the definition of $D^H,$ we obtain that $D_{2\pi/3}$ solves \eqref{PbD}.
\\\\
\underline{Uniqueness:} Let $D$ be an operator from $H^{1/2}_{2\pi/3}(\Sigma^i)$ into $H^{1/2}(\Sigma^H)$ such that for all $\phi \in H^{1/2}_{2\pi/3}(\Sigma^i),~D\phi|_{\Sigma^0}=0$ and which satisfies 
\begin{equation}
\label{PbD3}
D^H \circ D - D =0.
\end{equation}
We prove that $D=0.$ Let $\phi \in H^{1/2}_{2\pi/3}(\Sigma^i)$ and $v_0=u^H(D\phi)$ defined in $\Omega^H.$ We have in particular
$$v_{0}\left|_{\Sigma^0}\right.=u^H(D\phi)\left|_{\Sigma^0}\right.=D\phi\left|_{\Sigma^0}\right.=0.$$
Now we build a function in the half-space $\Omega_{2\pi/3}=\Theta_{2\pi/3} \Omega^H$ by: $v_{2\pi/3}=v_0(\Theta_{2\pi/3}x).$ By a classical argument, since $v_0$ is solution of \eqref{PbDemiespsec3} in $\Omega^H,$ it is clear that $v_{2\pi/3}$ is solution of 
$$\Delta v_{2\pi/3} + \rho v_{2\pi/3} = 0,~~~\hbox{in}~~\Omega_{2\pi/3},$$
while $v_0|{\Sigma^0}=0$ implies $v_{{2\pi/3}}\left|_{\Theta_{2\pi/3}\Sigma^0}\right.=0.$ We are going to show that $v_0$ and $v_{2\pi/3}$ coincide in $\Omega^H \cap (\Theta_{2\pi/3}\Omega^H).$ The difference $d_{2\pi/3}=v_0-v_{2\pi/3}$ satisfies
\begin{equation}
\label{PbD4}
\Delta d_{2\pi/3} + \rho d_{2\pi/3} = 0,~~~\hbox{in}~~\Omega^H \cap (\Theta_{2\pi/3}\Omega^H),
\end{equation}
with boundary condition: 
\[
d_{2\pi/3}\left|_{\Sigma^+}\right.=v_0\left|_{\Sigma^+}\right.-v_{2\pi/3}\left|_{\Sigma^+}\right.=v_0\left|_{\Sigma^+}\right.-v_0\left|_{\Sigma^+_{2\pi/3}}\right.,
\]
\[
d_{2\pi/3}\left|_{\Sigma^-_{2\pi/3}}\right.=v_0\left|_{\Sigma^-_{2\pi/3}}\right.-v_{2\pi/3}\left|_{\Sigma^-_{2\pi/3}}\right.=v_0\left|_{\Sigma^-_{2\pi/3}}\right.-v_0\left|_{\Sigma^-}\right.,
\]
since $\Sigma^+=\Theta_{2\pi/3} \Sigma^+_{2\pi/3}$, $\Sigma^-_{2\pi/3}=\Theta_{2\pi/3} \Sigma^-$ and $v_{2\pi/3}=v_0(\Theta_{2\pi/3}).$
Using the definition of $v_0,$ we have
$$d_{2\pi/3}\left|_{\Sigma^+}\right. = u^H(D\phi)\left|_{\Sigma^+}\right.-u^H(D\phi)\left|_{\Sigma^+_{2\pi/3}}\right.,$$
which gives using the definitions of $u^H$ and $D^H$
$$d_{2\pi/3}\left|_{\Sigma^+}\right. = D\phi\left|_{\Sigma^+}\right.-D^H \circ D\phi\left|_{\Sigma^+}\right.,$$
and then by \eqref{PbD3}
\begin{equation}
\label{PbD6}
d_{2\pi/3}\left|_{\Sigma^+}\right.=0.
\end{equation}
In the same way, we have :
$$d_{2\pi/3}\left|_{\Sigma^-_{2\pi/3}}\right. = u^H(D\phi)\left|_{\Sigma^-_{2\pi/3}}\right.-u^H(D\phi)\left|_{\Sigma^-}\right.,$$
and then, using again the definition of $u^H$ and $D^H$
\begin{equation}
\label{PbD6bis}
d_{2\pi/3}\left|_{\Sigma^-_{2\pi/3}}\right.=D\phi\left|_{\Sigma^-_{2\pi/3}}\right.-D^H \circ D\phi\left|_{\Sigma^-_{2\pi/3}}\right.=0.
\end{equation}
A uniqueness argument for the Dirichlet problem \eqref{PbD4}, \eqref{PbD6}, \eqref{PbD6bis} yields $v_{2\pi/3}=v_0$ in $\Omega^H \cap (\Theta_{2\pi/3} \Omega^H).$
\\\\
With the same argument, we can construct three solutions of the Helmholtz equation  $v_0,~v_{2\pi/3},~v_{4\pi/3}$ respectively in the domains $\Omega^H,~\Theta_{2\pi/3} \Omega^H$ and $\Theta_{4\pi/3} \Omega^H$ and which coincide in the domains where they are jointly defined:
\begin{itemize}
	\item $v_0=v_{2\pi/3}$ in $\Omega^H \cap (\Theta_{2\pi/3} \Omega^H),$
	\item $v_{2\pi/3}=v_{4\pi/3}$ in $(\Theta_{2\pi/3} \Omega^H) \cap (\Theta_{4\pi/3} \Omega^H),$
	\item $v_{4\pi/3}=v_{0}$ in $(\Theta_{4\pi/3} \Omega^H) \cap \Omega^H.$
\end{itemize}
Thus we can construct a function $U^e \in H^1(\Omega^e)$ defined in $\Omega^e$ by
\[
U^e\left|_{\Theta_{k\pi/3}\Omega^e}\right.=v_{k\pi/3},\qquad k \in \{0,2,4\},
\]
so that $U^e$ is an $H^1$-function which satisfies
$$\Delta U^e + \rho U^e =0,\qquad \hbox{in }\Omega^e,$$
with homogeneous Dirichlet condition on $\Sigma^i$. From the uniqueness for the exterior problem we get $U^e=0$ in $\Omega^e$ and therefore $D\phi|_{\Sigma^i}=U^e|_{\Sigma^i}=0$.
This concludes the proof of Theorem~\ref{THsec3_3}.
\end{proof}

\subsection{Characterization of the half-space DtD operator}\label{sub:DH_def}

\noindent Let us recall the definition of $D^H:$
\begin{center}
\begin{tabular}{ccc}
$D^H: H^{1/2}(\Sigma^H)$ & $\longrightarrow$ & $H^{1/2}(\Sigma^H)$ \\[5pt]
$\psi$ & $\longmapsto$ & $ \left\{ \begin{array}{ll} D^H \psi\left|_{\Sigma^-}\right.\equiv u^H(\psi)\left|_{\Sigma^-_{2\pi/3}}\right. \\[6pt]  D^H \psi\left|_{\Sigma^0}\right.\equiv u^H(\psi)\left|_{\Sigma^{0}}\right. \\[5pt] D^H \psi\left|_{\Sigma^+}\right.\equiv u^H(\psi)\left|_{\Sigma^+_{2\pi/3}}\right.
\end{array}\right., $
\end{tabular}
\end{center}
where we have identified $\Sigma^-$ and $\Sigma^-_{2\pi/3},~\Sigma^+$ and $\Sigma^+_{2\pi/3}$ taking into account the directions shown in Figure~\ref{notations_SigmaH}.
\\\\ 
Thanks to the results of Section \ref{SPbdemiesp}, we can give a semi-analytic expression for the DtD operator $D^H$.
\begin{proposition}\label{prop:DH_expression}
	For any $\psi$ in $H^{1/2}(\Sigma^H)$ and for any $\xi$ in $(-\pi/L,\pi/L)$, we have
	\begin{eqnarray}\label{eq:DH_express}
	\cF_y(D^H\, \psi)(\cdot,\xi)&=&\frac{L}{2\pi}\int_{-\pi/L}^{\pi/L}\-e^{\imath \xi L}\left(\mathcal{D}_{k}^{\ell +} +\mathcal{D}_{k}^{r+}\cP_{k}\right)\left(\mathcal{I}-\cP_{k}e^{\imath\xi L}\right)^{-1}\,R^{QP}_k\widehat{\psi}_k\,dk \nonumber\\
	&+&  \frac{L}{2\pi}\int_{-\pi/L}^{\pi/L}\widehat{\psi}_k\left|_{\Sigma^0}\,dk\right.\\
	&+&\frac{L}{2\pi}\int_{-\pi/L}^{\pi/L}\-e^{-\imath (k+\xi) L}\left(\mathcal{D}_{k}^{\ell -} +\mathcal{D}_{k}^{r-}\cP_{k}\right)\left(\mathcal{I}-\cP_{k}e^{-\imath(k+\xi) L}\right)^{-1}\,R^{QP}_k\widehat{\psi}_k\,dk \nonumber,\nonumber
	\end{eqnarray}
	where $\widehat{\psi}_k=\cF_y\psi(\cdot,k)$, the local DtD operators $\mathcal{D}_{k}^{\ell \pm}$ and $\mathcal{D}_{k}^{r \pm}$ are given in Definition \ref{defDtD} and $\cP_{k}$ is the propagation operator defined in Section \ref{ssub:the_propagation_operator}.
\end{proposition}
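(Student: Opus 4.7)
My plan is to combine the Floquet--Bloch decomposition in the vertical direction with the cell-by-cell reconstruction given by Theorem~\ref{Thu} and Corollary~\ref{cor:trace_uH}. Starting from the inversion formula \eqref{eq:extenduH}, I write
\[
u^H(\psi)=\sqrt{\tfrac{L}{2\pi}}\int_{-\pi/L}^{\pi/L}\widehat{u}^H_k(\widehat{\psi}_k)\,dk,
\]
and split $D^H\psi$ according to its three pieces on $\Sigma^-$, $\Sigma^0$, $\Sigma^+$. The middle piece is immediate: $D^H\psi|_{\Sigma^0}=\psi|_{\Sigma^0}$, so FB inversion together with the application of $\mathcal{F}_y$ at fiber $\xi$ produces precisely the central integral of \eqref{eq:DH_express}, the prefactor $L/(2\pi)$ arising as the product of two $\sqrt{L/(2\pi)}$ factors. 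The remaining pieces require evaluating $u^H(\psi)$ on the rotated rays $\Sigma^{\pm}_{2\pi/3}$.

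The key geometric observation is that, by hexagonal symmetry, each rotated ray is a disjoint union of cell edges of the periodic tiling of $\Omega^H$. More precisely, one expects that $\Sigma^+_{2\pi/3}$ coincides with the union of the upper edges $\Gamma^+_{p,\,q_+(p)}$ of a sequence of cells indexed by $p\geq 0$, and $\Sigma^-_{2\pi/3}$ with the union of the lower edges $\Gamma^-_{p,\,q_-(p)}$, where $q_\pm(p)$ are affine functions of $p$ determined by the vertical components of $\mathbf{e}_1$ and $\mathbf{e}_2$. Typically one should find $q_+(p)=0$ (since the cell drift $\mathbf{e}_1$ already carries a vertical shift of $L/2$ that together with the edge orientation accounts exactly for the ray's downward tilt) and $q_-(p)=-p$ (the opposite tilt forces an extra $-\mathbf{e}_2$ step per cell). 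On any such edge, Corollary~\ref{cor:trace_uH} gives
\[
\widehat{u}^H_k(\widehat{\psi}_k)\bigl|_{\Gamma^{\pm}_{p,q_\pm(p)}}=e^{\imath q_\pm(p)\, k L}\bigl(\mathcal{D}_k^{\ell\pm}+\mathcal{D}_k^{r\pm}\cP_k\bigr)\cP_k^{p}\,R^{QP}_k\widehat{\psi}_k .
\]

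Summing over $p\geq 0$ then produces a geometric series in $\cP_k$ which converges in $\mathcal{L}(H^{1/2}_k(\Sigma^{\ell}_{00}))$ thanks to Corollary~\ref{cor:prop_Pk}, and sums to the resolvent $(\mathcal{I}-\cP_k e^{\imath\alpha L})^{-1}$ for an appropriate $\alpha$. Applying $\mathcal{F}_y$ at fiber $\xi$ to the result, and using the identifications $\Sigma^{\pm}\simeq\Sigma^{\pm}_{2\pi/3}$ coming from the rotation, each phase $e^{\imath q_\pm(p) k L}$ combines with the vertical drift per cell to yield either $e^{\imath p\xi L}$ on the $+$ side or $e^{-\imath p(k+\xi) L}$ on the $-$ side, which is exactly the content of the two outer integrals in \eqref{eq:DH_express}. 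The main technical obstacle is the geometric step: carefully pinning down the sequences $q_\pm(p)$ and tracking phases through the reparameterization, since the rotation by $\pm 2\pi/3$ mixes the original quasiperiodicity parameter $k$ with the new FB variable $\xi$ in a coupled way that must reproduce precisely the phases $\xi L$ and $-(k+\xi)L$. Once this bookkeeping is established, the rest reduces to the geometric-series calculation and the FB calculus already developed in Section~\ref{SPbdemiesp}.
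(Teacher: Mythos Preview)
Your strategy is exactly the one the paper follows: decompose $u^H(\psi)$ via FB inversion \eqref{eq:extenduH}, identify each rotated ray $\Sigma^{\pm}_{2\pi/3}$ as a union of cell edges, read off the traces via Corollary~\ref{cor:trace_uH}, sum the resulting geometric series in $\cP_k$, and apply $\cF_y$ at fiber $\xi$.

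However, the tentative edge identifications you write down are swapped and off by one, and since these determine precisely the phases in \eqref{eq:DH_express}, the mismatch is not harmless. The correct geometric statement (in the paper's orientation conventions) is
\[
\Sigma^-_{2\pi/3}=\bigcup_{n\geq 0}\Gamma^{+}_{n,0},\qquad
\Sigma^+_{2\pi/3}=\bigcup_{n\geq 0}\Gamma^{-}_{n,\,-(n+1)},
\]
so that the operators $\mathcal{D}^{\ell +}_k,\mathcal{D}^{r+}_k$ are attached to the $\Sigma^-$ piece (not $\Sigma^+$), and the vertical index on the other ray is $-(n+1)$, not $-n$. With this in hand, the $\Sigma^-$ contribution carries no $k$-phase and the FB phase $e^{\imath(n+1)\xi L}$, summing to $e^{\imath\xi L}(\mathcal{I}-\cP_k e^{\imath\xi L})^{-1}$; the $\Sigma^+$ contribution carries $e^{-\imath(n+1)kL}$ from quasiperiodicity together with $e^{-\imath(n+1)\xi L}$ from the FB transform, summing to $e^{-\imath(k+\xi)L}(\mathcal{I}-\cP_k e^{-\imath(k+\xi)L})^{-1}$. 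Once the assignments are corrected, the rest of your outline goes through verbatim. One further point the paper makes explicit: the interchange of the sum over $n$ with the integral over $k$ requires a bound on $\rho(\cP_k)$ that is \emph{uniform} in $k$; Corollary~\ref{cor:prop_Pk} alone gives only $\rho(\cP_k)<1$ for each fixed $k$, and the paper invokes the stronger estimate $\rho(\cP_k)\le e^{-C\rho_b}$ (coming from the dissipation assumption \eqref{HypDissipation}) to justify this step.
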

\begin{remark}
Let us remark that the subscript $k$ of the operators in formula \eqref{eq:DH_express} means that they actually depend on the variable $k$. 
\end{remark}

\begin{proof}
	Let $k\in (-\pi/L,\pi/L)$, we express first $D^H\,\widehat{\psi}_k$ for $k-$quasiperiodic boundary data $\widehat{\psi}_k\in H^{1/2}_k(\Sigma^H)$. By definition of $D^H$, we are then interested in expressing $\widehat{u}^H_k(\widehat{\psi}_{k})\left|_{\Sigma^-_{2\pi/3}}\right.$, $\widehat{u}^H_k(\widehat{\psi}_{k})\left|_{\Sigma^0}\right.$ and $\widehat{u}^H_k(\widehat{\psi}_{k})\left|_{\Sigma^+_{2\pi/3}}\right.$. Using notation of Section \ref{sub:PH_kQPphi}, let us remark that 
	$$\Sigma^-_{2\pi/3}=\bigcup_{n=+\infty}^0 \Gamma_{n0}^{+} ,~~~\Sigma^+_{2\pi/3}=\bigcup_{n=0}^{+\infty} \Gamma_{n,-(n+1)}^{-} ,$$
taking into account the directions shown in Figures \ref{notations_SigmaH} and \ref{notationscell}.
\\\\Using Corollary \ref{cor:trace_uH}, we have
\begin{itemize}
	\item on $\Sigma^-_{2\pi/3}$, for all $n \in \N$,  
	$$ \widehat{u}_k^H(\widehat{\psi}_k)\left|_{\Gamma^{+} _{n0}}\right.=\left(\mathcal{D}_k^{\ell +}  \, \cP^n_k + \mathcal{D}_k^{r +}\,\cP_k^{n+1}\right) \,R^{QP}_k\widehat{\psi}_k$$
	and $D^H\,\widehat{\psi}_k\left|_{\Sigma^-_n}\right.=\widehat{u}_k^H(\widehat{\psi}_k)\left|_{\Gamma^{+} _{n0}}\right.$ where we have denoted $\Sigma^-_n = \Theta_{-2\pi/3}\Gamma^{\ell +}_{n0};$
	\item on $\Sigma^0$
	$$ D^H\,\widehat{\psi}_k\left|_{\Sigma^0}\right. = \widehat{u}_k^H(\widehat{\psi}_k)\left|_{\Sigma^0}\right. =\widehat{\psi}_k\left|_{\Sigma^0}\right.;$$
	\item on $\Sigma^+_{2\pi/3}$, for all $n \in \N$, 
	$$ \widehat{u}_k^H(\widehat{\psi}_k)\left|_{\Gamma^{-} _{n,-(n+1)}}\right.= e^{-\imath(n+1)kL}\left(\mathcal{D}_k^{\ell -} \, \cP^n_k + \mathcal{D}_k^{r -} \, \cP_k^{n+1}\right)\,R^{QP}_k\widehat{\psi}_k$$
	and $D^H\,\widehat{\psi}_k\left|_{\Sigma^+_n}\right.=\widehat{u}_k^H(\widehat{\psi}_k)\left|_{\Gamma^{-} _{n,-(n+1)}}\right.$ where $\Sigma^+_n = \Theta_{2\pi/3}\Gamma^{-} _{n,-(n+1)}.$
\end{itemize}
For any data $\psi\in H^{1/2}(\Sigma^H)$, we use the FB transform and formula \eqref{eq:extenduH} to obtain the expression "piece by piece" of $D^H\psi$ from $\widehat{\psi}_k =\cF_y\psi(\cdot,k)$ 
\begin{itemize}
	\item on $\dsp\Sigma^-=\cup_{n\in\N}\Sigma^-_n$, for all $n \in \N$,  
	$$ D^H\,{\psi}\left|_{\Sigma^-_n}\right.=\sqrt{\frac{L}{2\pi}}\int_{-\pi/L}^{\pi/L}\left(\mathcal{D}_k^{\ell +}  \, \cP^n_k + \mathcal{D}_k^{r +}\,\cP_k^{n+1}\right) \,R^{QP}_k\widehat{\psi}_k\,dk;$$
	\item on $\Sigma^0$
	$$ D^H\,{\psi}\left|_{\Sigma^0}\right. = \sqrt{\frac{L}{2\pi}}\int_{-\pi/L}^{\pi/L}\widehat{\psi}_k\left|_{\Sigma^0}\,dk\right.;$$
	\item on $\dsp\Sigma^+=\cup_{n\in\N}\Sigma^+_n$, for all $n \in \N$, 
	$$ D^H\,{\psi}\left|_{\Sigma^+_n}\right.=\sqrt{\frac{L}{2\pi}}\int_{-\pi/L}^{\pi/L} e^{-\imath(n+1)kL}\left(\mathcal{D}_k^{\ell -} \, \cP^n_k + \mathcal{D}_k^{r -} \, \cP_k^{n+1}\right)\,R^{QP}_k\widehat{\psi}_k\,dk.$$
\end{itemize}
We finally apply the FB transform in the $y-$ direction to $D^H \,\psi$:
\begin{eqnarray}
\cF_y(D^H \,\psi)(\cdot,\xi)=\sqrt{\frac{L}{2\pi}}\left[\sum_{n=\infty}^0 D^H\,{\psi}\left|_{\Sigma^-_n}\right.e^{\imath\xi(n+1)L}+D^H\,{\psi}\left|_{\Sigma^0}\right. +\sum_{n=0}^\infty D^H\,{\psi}\left|_{\Sigma^+_n}\right.e^{-\imath \xi(n+1)L}\right]. \nonumber
\end{eqnarray}
By inverting the integral over $(-\pi/L,\pi/L)$ and the sum over $n$, we are led to use the following formula:
\begin{equation}\label{eq:sum_cv}
	\sum_{n\in\N}\cP_k^n e^{\pm\imath(n+1)\zeta L} = e^{\pm\imath\zeta L}\left(\mathcal{I}-\cP_ke^{\pm\imath\zeta L}\right)^{-1}
\end{equation}
with $\zeta=k$ or $\zeta = k+\xi$. Note that this is possible as for every $k$, $\cP_k$ is compact with spectral radius strictly less than $1$. Actually, we could prove like in \cite{TheseSonia} that for $ \rho_b>0$ (defined in \eqref{HypDissipation}), the spectral radius $\rho(\cP_k)$ of $\cP_k$ is uniformly bounded in $k$, by a constant that is strictly less than $1$:
\[
	\exists C>0,\;\forall k\in\left(-\frac{\pi}{L},\frac{\pi}{L}\right),\quad \rho(\cP_k)\leq e^{-C \rho_b}.
\]
The property 
\[
	\lim_{n\rightarrow +\infty}\norm{\cP_k^n}^{1/n} = \rho(\cP_k)
\]
for the norm of $\mathcal{L}(L^2(\Sigma_{00}^\ell))$ (\cite{Weidmann:1980}) implies that for some $\alpha\in ]e^{-C \rho_b},1[ $, $j$ large enough we have for all $k$
\[
	\norm{\cP_k^j}\leq \alpha^j,
\]
which yields the absolute convergence of the series \eqref{eq:sum_cv}. Therefore, for each $\zeta$, $\mathcal{I}-\cP_ke^{\pm\imath\zeta L}$ is invertible and the sum \eqref{eq:sum_cv} converges uniformly in the norm of $\mathcal{L}(L^2(\Sigma_{00}^\ell))$. Exchanging the order of the integral and the sum is then possible.
\end{proof}

\begin{remark}\label{rem:FBT}
	Our choice of solving a family of $k-$quasiperiodic half-space problems instead of a family of $k-$quasiperiodic half waveguide problems simplifies the expression of $D^H$. Indeed, expressing $u^H(\phi)$ on $\Sigma^\pm_{2\pi/3}$ is much easier by doing so. 
\end{remark}

\subsection{Towards the integral equation} 
\label{sub:towards_the_integral_equation}

Let us give now more precisions about the resolution of the affine equation \eqref{PbD}. Since the operator $D^H$ is characterized via its FB transform, it makes sense to reformulate \eqref{PbD} using the FB transform.
\begin{corollary}
	For any $\phi\in H^{1/2}_{2\pi/3}(\Sigma^i)$, the function
	\[
		\widehat{\psi}_{2\pi/3} := \cF_y(D_{2\pi/3}\phi)\quad\in H^{1/2}_{QP}\left(\Sigma^H\times\left(-\frac{\pi}{L},\frac{\pi}{L}\right)\right)
	\]
	is the unique solution to the following problem
	\[
		\mbox{Find } \widehat{\psi} \in H^{1/2}_{QP}\left(\Sigma^H\times\left(-\frac{\pi}{L},\frac{\pi}{L}\right)\right), \mbox{ such that },
	\]
	\begin{equation}
	\label{PbD_FB}
	\begin{array}{|cl}
		(i)&\dsp \forall\xi\in\left(-\frac{\pi}{L},\frac{\pi}{L}\right),\quad\widehat{\psi}(\cdot,\xi)-\int_{-\pi/L}^{\pi/L}K^H(k,\xi)\,\widehat{\psi}(\cdot,k)\,dk = 0,\\[7pt]
		(ii)&\dsp \sqrt{\frac{L}{2\pi}}\int_{-\pi/L}^{\pi/L}\widehat{\psi}(\cdot,k)\left|_{\Sigma^0}\right.\,dk\, = \phi\left|_{\Sigma^0},\right.
\end{array}	\end{equation}
	where $K^H(k,\xi)\in \mathcal{L}\left(H^{1/2}_k(\Sigma^H),H^{1/2}_\xi(\Sigma^H)\right)$ is given by
	\begin{eqnarray*}
	K^H(k,\xi)&=&\frac{L}{2\pi}\,e^{\imath \xi L}\left(\mathcal{D}_{k}^{\ell +} +\mathcal{D}_{k}^{r+}\cP_{k}\right)\left(\mathcal{I}-\cP_{k}e^{\imath\xi L}\right)^{-1}\,R^{QP}_k \nonumber\\
	&+&  \frac{L}{2\pi}R^H\\
	&+&\frac{L}{2\pi}e^{-\imath (k+\xi) L}\left(\mathcal{D}_{k}^{\ell +} +\mathcal{D}_{k}^{r+}\cP_{k}\right)\left(\mathcal{I}-\cP_{k}e^{-\imath(k+\xi) L}\right)^{-1}\,R^{QP}_k\nonumber, 
	\end{eqnarray*}
	with $R^H$ the restriction operator from $\Sigma^H$ on $\Sigma^0$ defined in Section \ref{Sect_FACT}.
\end{corollary}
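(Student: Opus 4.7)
The plan is to translate the fixed-point characterization of $D_{2\pi/3}$ given by Theorem~\ref{THsec3_3} into a system for its partial Floquet--Bloch image $\widehat{\psi}:=\mathcal{F}_y(D_{2\pi/3}\phi)$. Since $\mathcal{F}_y$ is an isomorphism from $H^{1/2}(\Sigma^H)$ onto $H^{1/2}_{QP}(\Sigma^H\times(-\pi/L,\pi/L))$ by Theorem~\ref{th:FB_Hs}, and since an explicit expression of $D^H$ in Floquet--Bloch variables is already available from Proposition~\ref{prop:DH_expression}, the argument is essentially a reformulation rather than a new analytical result.

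First, set $\psi:=D_{2\pi/3}\phi\in H^{1/2}(\Sigma^H)$. Theorem~\ref{THsec3_3} and the fact that $D_{2\pi/3}\in\mathcal{L}_{\Sigma^0}$ give the two identities
\[
\psi = D^H\psi, \qquad \psi|_{\Sigma^0}=\phi|_{\Sigma^0}.
\]
Applying $\mathcal{F}_y$ to the first one and substituting the three-term formula \eqref{eq:DH_express} of Proposition~\ref{prop:DH_expression} produces, for almost every $\xi$, the identity $\widehat{\psi}(\cdot,\xi)=\int K^H(k,\xi)\,\widehat{\psi}(\cdot,k)\,dk$. The two ``propagating'' terms of \eqref{eq:DH_express} match directly the first and third terms of $K^H(k,\xi)$; the middle term $\frac{L}{2\pi}\int\widehat{\psi}_k|_{\Sigma^0}\,dk$ of \eqref{eq:DH_express} is rewritten as $\frac{L}{2\pi}\int R^H\widehat{\psi}(\cdot,k)\,dk$, since by definition of the restriction operator $R^H$ one has $R^H\widehat{\psi}(\cdot,k)=\widehat{\psi}(\cdot,k)|_{\Sigma^0}$. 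This is precisely equation (i). For (ii), apply the Floquet--Bloch inversion formula \eqref{eq:FBext_inv} pointwise on $\Sigma^0$ to get $\psi|_{\Sigma^0}=\sqrt{L/(2\pi)}\int\widehat{\psi}(\cdot,k)|_{\Sigma^0}\,dk$, and combine with $\psi|_{\Sigma^0}=\phi|_{\Sigma^0}$.

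For uniqueness, let $\widehat{\psi}$ be any solution of \eqref{PbD_FB} and define $\psi:=\mathcal{F}_y^{-1}\widehat{\psi}\in H^{1/2}(\Sigma^H)$, which is legitimate by Theorem~\ref{th:FB_Hs}. Reading (i) backwards through Proposition~\ref{prop:DH_expression} yields $\mathcal{F}_y\psi=\mathcal{F}_y(D^H\psi)$ and hence $\psi=D^H\psi$ by injectivity of $\mathcal{F}_y$; likewise, (ii) together with \eqref{eq:FBext_inv} gives $\psi|_{\Sigma^0}=\phi|_{\Sigma^0}$. The map $\phi\mapsto\psi$ therefore belongs to $\mathcal{L}_{\Sigma^0}$ and satisfies $D=D^H\circ D$, so the uniqueness statement in Theorem~\ref{THsec3_3} forces $\psi=D_{2\pi/3}\phi$, i.e.\ $\widehat{\psi}=\mathcal{F}_y(D_{2\pi/3}\phi)$.

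The main obstacle is almost entirely bookkeeping: (a) tracking the normalization constants $\sqrt{L/(2\pi)}$ of \eqref{eq:FBext_inv} against the $L/(2\pi)$ prefactor of \eqref{eq:DH_express}, so that the constraint (ii) really reproduces $\psi|_{\Sigma^0}=\phi|_{\Sigma^0}$ without a spurious factor; and (b) correctly matching the three boundary pieces $\Sigma^{-},\Sigma^0,\Sigma^{+}$ of $D^H$ with the three contributions to $K^H(k,\xi)$, in particular identifying the pure-restriction block $\frac{L}{2\pi}R^H$ as coming from the trivial action $D^H\psi|_{\Sigma^0}=\psi|_{\Sigma^0}$. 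No new estimate is needed, since the convergence of the Neumann-type series $(\mathcal{I}-\mathcal{P}_k e^{\pm\imath\zeta L})^{-1}$ entering $K^H(k,\xi)$ has already been justified in the proof of Proposition~\ref{prop:DH_expression} via the uniform spectral bound on $\mathcal{P}_k$ provided by Corollary~\ref{cor:prop_Pk}.
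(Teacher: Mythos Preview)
Your proposal is correct and follows exactly the route the paper takes: the paper does not give a formal proof of this Corollary but simply remarks that relation~\eqref{PbD_FB}-(i) is the Floquet--Bloch transform of the operator equation~\eqref{PbD} via Proposition~\ref{prop:DH_expression}, while relation~\eqref{PbD_FB}-(ii) is the FB expression of the constraint $D_{2\pi/3}\phi|_{\Sigma^0}=\phi|_{\Sigma^0}$. Your write-up is essentially a fleshed-out version of that sentence; the only cosmetic point is that in the uniqueness step you should invoke the \emph{pointwise} content of the uniqueness proof of Theorem~\ref{THsec3_3} (for fixed $\phi$, any $\psi\in H^{1/2}(\Sigma^H)$ with $\psi=D^H\psi$ and $\psi|_{\Sigma^0}=\phi|_{\Sigma^0}$ equals $D_{2\pi/3}\phi$) rather than speak of ``the map $\phi\mapsto\psi$'' belonging to $\mathcal{L}_{\Sigma^0}$, since you have only constructed $\psi$ for one given $\phi$.
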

\noindent Relation \eqref{PbD_FB}-(i) is the FB transform of Equation \eqref{PbD} while relation \eqref{PbD_FB}-(ii) expresses in terms of the FB-variables the condition:
\[
	D_{2\pi/3}\phi\,\left|_{\Sigma^0}\right. = \phi\,\left|_{\Sigma^0}\right..
\]
From a practical point of view, it seems to us that it is easier to solve \eqref{PbD_FB} instead of \eqref{PbD}. The advantage is to replace the discretization of an infinite set by the discretization of a compact set. 

\section{Summary and conclusion} 
\label{sec:summary_and_conclusions}
In this paper, we proposed a method to solve scattering problems in infinite hexagonal periodic media containing local perturbations. We have computed the Dirichlet-to-Neumann map on the boundary of a bounded domain which respects the hexagonal geometry. By doing so, the initial problem is reduced to the solution of a boundray value problem set in a cell containing the defect. The computation of the DtN map $\Lambda$ is based on a factorization through a half-space DtN operator $\Lambda^H$ and a DtD operator $D_{2\pi/3}$ (see Theorem \ref{THsec3_2}). We sum up below the main steps to be followed in order to compute $\Lambda\phi$, for a given boundary data with hexagonal symmetry $\phi\in H^{1/2}_{2\pi/3}(\Sigma^i)$.
\begin{enumerate}
	\item Pre-processing steps
	\begin{enum_spe}
		\item[For all $k\in(-\pi/L,\pi/L)$]\begin{enumerate}
		\item Solve the cell problems \eqref{CellPb1}-\eqref{CellPb2}
			\item Compute the local DtN operators \\ (see Definition \eqref{defDtN})
			\item Compute the local DtD operators \\ (see Definition \eqref{defDtD})
		\item Solve the stationary Riccati equation \eqref{Riccati}
		\item Compute $\widehat{\Lambda}^H_k$ thanks to relation \eqref{eq:Lambda_H_chapeau}
	\end{enumerate}
\end{enum_spe}
	\item Solve integral equation \eqref{PbD_FB} using the steps (1)-(c) and (1)-(d) to obtain $
		\widehat{\psi}_{2\pi/3}=\mathcal{F}_y\Big(D_{2\pi/3}\phi\Big) $
	\item Using relation \eqref{eq:lambdaH}, 
	\[
		\Lambda\,\phi = \sqrt{\frac{L}{2\pi}}\int_{-\pi/L}^{\pi/L}\widehat{\Lambda}^H_k\widehat{\psi}_{2\pi/3}(\cdot,k)\,dk
	\]
\end{enumerate}
The pre-processing steps only involve the solution of elementary problems set on bounded domains, for each values of $k$. These steps can be easily parallelized as the problems for different values of $k$ are decoupled.  Solving the integral equation \eqref{PbD_FB} constitutes the main difficulty in implementing the algorithm. The discretization and numerical investigation of the problem will be considered in a forthcoming article.


\end{document}